\numberwithin{equation}{section}
\newtheorem{theorem}{Theorem}[section]
\newtheorem{definition}[theorem]{Definition}
\newtheorem{lemma}[theorem]{Lemma}
\newtheorem{proposition}[theorem]{Proposition}
\newtheorem{remark}[theorem]{Remark}
\newtheorem*{hypothesis}{Hypothesis}
\def\E{\mathbb{E}}
\def\L{\mathcal{L}}
\def\R{\mathbb{R}}
\def\F{\mathcal{F}}
\def\H{\mathcal{H}}
\begin{document}

\title{On a class of stochastic partial differential equations}
\author{Jian Song}
\date{}
\maketitle
\begin{abstract}  This paper concerns the stochastic partial differential equation with multiplicative noise $\frac{\partial u}{\partial t} =\mathcal L u+u\dot W$, where $\mathcal L$ is the generator of a symmetric L\'evy process $X$, $\dot W$ is a Gaussian noise and $u\dot W$ is understood both in the senses of Stratonovich and Skorohod.  The Feynman-Kac type of representations for the solutions and the moments of the solutions are obtained, and the H\"older continuity of the solutions is also studied. As a byproduct, when $\gamma(x)$ is a nonnegative and nonngetive-definite function, a sufficient and necessary condition for $\int_0^t\int_0^t |r-s|^{-\beta_0}\gamma(X_r-X_s)drds$ to be exponentially integrable is obtained. 
\end{abstract}

\section{Introduction}

In \cite{MR876085}, Walsh developed the theory of stochastic integrals with respect to martingale measures and used it to study the stochastic partial differential equations (SPDEs) driven by space-time Gaussian white noise. Dalang in his seminal paper \cite{MR1684157} extended the definition of Wash's stochastic integral and applied it to solve SPDEs with Gaussian noise white in time and homogeneously colored in space (white-colored noise). Recently, the theories on SPDEs with white-colored noise have been extensively developed, and one can refer to, for instance, \cite{MR2295103, MR1207136,MR1500166,  MR3222416, MR2329435} and the references therein. For the SPDEs with white-colored noise, the methods used in the above-mentioned literature relies on the martingale structure of the noise, and hence cannot be applied to the case when the noise is colored in time.  On the other hand, SPDEs driven by a Gaussian noise which is colored in time and (possibly) colored in space have attracted more and more attention. 

In the present article, we consider the following SPDE in $\mathbb R^d$,
\begin{equation}\label{spde}
\begin{cases}
\displaystyle\frac{\partial u}{\partial t}=\L u+u\dot {W},& t\ge 0, x\in \mathbb R^d\\
u(0,x)=u_0(x),& x\in\mathbb R^d.
\end{cases}
\end{equation}
In the above equation, $\L$ is the generator of a L\'evy process $\{X_t,t\ge 0\}$, $u_0(x)$ is a continuous and bounded function, and the noise $\dot{W}$ is a (generalized) Gaussian  random field independent of $X$ with the covariance function given by  
\begin{equation}\label{kernel}
\E[\dot{W}(t,x)\dot{W}(s,y)]=|t-s|^{-\beta_0}\gamma(x-y),
\end{equation}
where  $\beta_0\in(0,1)$ and $\gamma$ is a symmetric ,nonnegative and nonnegative-definite (generalized) function. The product $u\dot W$ in (\ref{spde}) is understood either in the {\it Stratonovich} sense 
or in the {\it Skorohod} sense. 
 Throughout the paper, we assume that $X$ is a symmetric L\'evy process with characteristic exponent $\Psi(\xi)$, i.e., $\E\exp(i\xi X_t)=\exp(-t\Psi(\xi))$. Note that the symmetry implies that $\Psi(\xi)$ is a real-valued nonnegative function. Furthermore, we assume that $X$ has transition functions denoted by $q_t(x)$, which also entails that $\lim_{|\xi|\to \infty} \Psi(\xi)=\infty$ by Riemann-Lebesgue lemma.

  When $\L=\frac12 \Delta$ where $\Delta$ is  the Laplacian operator, and $\dot W$ is colored in time and white in space, Hu and Nualart \cite{MR2449130} investigated the conditions to obtain a unique mild solution for (\ref{spde}) in the Skorohod sense, and obtained the Feynman-Kac formula for the moments of the solution. When $\L =\frac12 \Delta$,   and $\dot W$ is a fractional white noise with Hurst parameters $H_0\in (\frac12, 1)$ in time and $(H_1,\dots, H_d)\in (\frac12, 1)^d$ in space, i.e., $\beta_0= 2-2H_0$ and $\gamma(x)=\prod_{i=1}^d|x_i|^{2H_i-2}$,  Hu et al. \cite{MR2778803}  obtained a Feynman-Kac formula for a weak solution under the condition $2H_0+\sum_{i=1}^d H_i>d+1$ for the SPDE in the Stratonovich sense. This result was extended to the case $\L=-(-\Delta)^{\alpha/2}$ in Chen et al. \cite{CHS}. A recent paper  \cite{HHNT}  by Hu et al. studied (\ref{spde}) in both senses when $\L=\frac12\Delta$ and $\dot W$ is a general Gaussian noise,  obtained the Feynman-Kac formulas for the solutions and the moments of the solutions,  and investigated H\"older continuity of the Feynman-Kac functional  and the intermittency of the solutions.

There has been fruitful literature on (\ref{spde}) in the sense of Skorohod, especially when $\dot W$ is white in time. For instance, when $\L=\frac12\Delta$, (\ref{spde})  is the well-known {\it parabolic Anderson model}  (\cite{Anderson}) and has been extensively investigated in, for example, \cite{MR1316109, MR1185878, MR1111743}.  Foondun and Khoshnevisan \cite{ MR2480553, MR2984063} studied the general nonlinear SPDEs.  For SPDE \eqref{spde} with space-time colored noise, the intermittency property of the solution was investigated in \cite{Balan-Conus-2013, MR3414457}  when $\L=\frac12\Delta$,  and in \cite{MR3262944} when $\L=-(-\Delta)^{\alpha/2}$.

The main purpose of the current paper is to study (\ref{spde}) in both senses of Stratonovich and Skorohod under the assumptions Hypothesis (I) in Section 3 and Hypothesis (II) in Section 5.1 respectively. Under Hypothesis (I),  we will obtain Feynman-Kac type of representations for a mild solution  to  (\ref{spde}) in the  Stratonovich sense and for the moments of the solution (Theorem \ref{thmfk} and Theorem \ref{thmfkmom}). Under Hypothesis (II), we will show that the mild solution to  (\ref{spde}) in the Skorohod sense exists uniquely, and obtain the Feynman-Kac formula for the moments of the solution (Theorem \ref{thmfkskr} and Theorem \ref{thmfkmom'}). Furthermore, under stronger conditions, we can get H\"older continuity of the solutions in both senses (Theorem \ref{thmholder} and Theorem \ref{thmholder'}). As a byproduct, we show that Hypothesis (I) is  a sufficient and necessary condition for the Hamiltonian $\int_0^t\int_0^t |r-s|^{-\beta_0}\gamma(X_r-X_s)drds$ to be exponentially integrable (Proposition \ref{integrable} and Theorem \ref{expint}).

There are two key ingredients to prove the main result Theorem \ref{thmfk} for the Stratonovich case. One is to obtain the exponential integrability of $\int_0^t\int_0^t |r-s|^{-\beta_0}\gamma(X_r-X_s)drds$. When $X$ is a Brownian motion, Le Gall's moment method (\cite{MR1329112}) was applied in \cite{MR2778803} to get the exponential integrability, and when $X$ is a symmetric $\alpha$-stable process,  the techniques from large deviation were employed in \cite{CHS, MR3414457}. However, in the current paper, we cannot apply directly either of the two approaches  due to the lacks of  the self-similarity of the L\'evy process $X$ and the homogeneity of the spatial kernel function $\gamma(x)$.   Instead, to get the desired exponential integrability, we estimate the moments of $\int_0^t\int_0^t |r-s|^{-\beta_0}\gamma(X_r-X_s)drds$ directly using Fourier analysis inspired by \cite{HHNT} and the techniques for the computation of moments used in \cite{MR2473265}.   The other key ingredient  is to justify that the Feynman-Kac representation (\ref{mildu}) is a mild solution to (\ref{spde}) in the sense of Definition \ref{defmildstr}. To this goal, we will apply the Malliavin calculus and follow the ``standard'' approach used in \cite{MR2778803, CHS, HHNT}.  

We get the existence of the solution to (\ref{spde}) in the Stratonovich sense by finding its Feynman-Kac representation directly, while in this article we do not address its uniqueness which will be our future work. A possible ``probabilistic'' treatment that was used in \cite{MR2544739} is to express the Duhamel solution as a sum of multiple Stratonovich integrals, and then investigate its relationship (the Hu-Meyer formula \cite{MR960509}) with the Wiener chaos expansion. Another approach is to consider (\ref{spde}) pathwisely as  a ``deterministic'' equation. Hu et al. \cite{HHNT} obtained the existence and uniqueness of (\ref{spde}) in the Stratonovich sense when $\L=\frac12 \Delta$ and $\dot W$  is a general Gaussian noise, by linking it to a general pathwise equation for which the authors obtained the existence and uniqueness in the framework of weighted Besov spaces. For general SPDEs, one can refer to \cite{MR2765508, MR2925571,MR2255351, MR2599193} for the rough path treatment. Recently, Deya \cite{Deya} applied Hairer's regularity structures theory (\cite{MR3071506}) to investigate a nonlinear heat equation driven by a space-time fractional white noise.

For (\ref{spde}) in the Skorohod sense, we obtain the existence and uniqueness result by studying the chaos expansion of the solution as has been done in \cite{MR2449130, MR3262944, HHNT}. We apply the approximation method initiated in \cite{MR2449130} to get the Feynman-Kac type of representation for the moments of the solution.  One possibly can also obtain the representation by directly computing the expectations of the products of Wiener chaoses as in \cite{MR3080991}. 

The rest of the paper is organized as follows. In Section \ref{sectionpre}, we recall some preliminaries on the Gaussian noise and Malliavin calculus. In Section \ref{sectionei}, we provide a sufficient and necessary condition for the Hamiltonian $\int_0^t\int_0^t |r-s|^{-\beta_0}\gamma(X_r-X_s)drds$ to be exponentially integrable.  In Section \ref{sectionstr}, the Feynman-Kac formula for a mild solution to (\ref{spde}) in the Stratonovich sense is obtained, the Feynman-Kac formula for the moments of the solution is provided, and  the H\"older continuity of the solution is studied. Finally, in Section \ref{sectionsk}, we obtain the existence and uniqueness of the mild solution in the Skorohod sense under some condition, find the Feynman-Kac formula for the moments, and investigate the H\"older continuity of the solution.

\section{Preliminaries}\label{sectionpre}
In this section, we introduce the stochastic integral with respect to the noise $\dot W$ and recall some material from Malliavin calculus which will be used. 
 
Let $C_0^{\infty}(\mathbb R_+\times \mathbb R^d)$ be the space of smooth functions on $\mathbb R_+\times \mathbb R^d$ with compact supports, and the Hilbert space $\mathcal H$ be the completion of $C_0^\infty(\mathbb R_+\times \mathbb R^d)$ endowed with the inner product
 \begin{equation}\label{inner}
\langle \varphi, \psi\rangle_{\mathcal H}=\int_{\mathbb R_+^2}\int_{\mathbb R^{2d}} \varphi(s,x)\psi(t,y) |t-s|^{-\beta_0}\gamma(x-y)dsdtdxdy,
\end{equation}
where  $\beta_0\in(0,1)$ and $\gamma$ is a symmetric, nonnegative and nonnegative-definite function. Note that $\mathcal H$ contains all measurable functions $\phi$ satisfying
$$\int_{\mathbb R_+^2}\int_{\mathbb R^{2d}} |\phi(s,x)||\phi(t,y)| |t-s|^{-\beta_0}\gamma(x-y)dsdtdxdy<\infty.$$
In a complete probability space $(\Omega, \mathcal F, P)$,  we define an isonormal Gaussian process (see, e.g., \cite[Definition 1.1.1]{MR2200233}) $W=\{W(h), h\in \mathcal H \}$  with the covariance function given by
$\E[W(\varphi)W(\psi)]=\langle \varphi, \psi\rangle_\mathcal H.$
In this paper, we will also use the following stochastic integral to denote $W(\varphi)$,
\[W(\varphi):=\int_0^\infty \int_{\R^d} \varphi(s,x) W(ds,dx).\]

Denote $\mathcal S(\R^d)$ the Schwartz space of  rapidly decreasing functions and let $\mathcal S'(\R^d)$ denote its dual space of tempered distributions.  Let $\widehat \varphi$ or $\mathcal F\varphi$ be the Fourier transform of $\varphi\in \mathcal S'(\R^d)$, which can be defined as the following integral if $\varphi \in L^1(\R^d)$,
$$\widehat \varphi(\xi)=\mathcal F \varphi(\xi):=\int_{\R^d} e^{-i\xi\cdot x}\varphi(x)dx.$$
By the Bochner-Schwartz theorem (see, e.g., Theorem 3 in Section 3.3, Chapter II in \cite{MR0435834}), the {\it spectral measure} $\mu$ of the process $W$ defined by 
\begin{equation}\label{spectral}
\int_{\mathbb R^d} \gamma(x) \varphi(x) dx=\frac{1}{(2\pi)^d}\int_{\mathbb R^d} \widehat \varphi(\xi)\mu(d\xi), \quad \forall\, \varphi\in\mathcal S(\mathbb R^d)
\end{equation}
 exists and is positive and tempered (meaning that there exists $p\ge 1$ such that $\int_{\R^d}(1+|\xi|^2)^{-p} \mu(d\xi)<\infty$). The inner product in (\ref{inner}) now can be represented by: 
\begin{equation}\label{inner'}
\langle \varphi, \psi\rangle_{\mathcal H}= \frac{1}{(2\pi)^d}
\int_{\mathbb R_+^2}\int_{\mathbb R^{d}} \widehat \varphi(s,\xi)\overline{\widehat \psi(t,\xi)} |t-s|^{-\beta_0}\mu(d\xi)dsdt,\quad \forall\, \varphi, \psi\in C_0^\infty(\R_+\times\R^d),
\end{equation}
where the Fourier transform is with respect to the space variable only, and $\overline z$ is the complex conjugate of $z$.

Throughout the paper, we assume that  the symmetric covariance function $\gamma(x)$ possesses the following properties . 
\begin{itemize}
\item[(1)] $\gamma(x)$ is nonnegative and locally integrable. 
\item[(2)] Its Fourier transform $\widehat \gamma(\xi) \in \mathcal S'(\R^d)$ is a measurable function which is nonnegative almost everywhere. 
\item[(3)] $\gamma(x)$ is a continuous functions mapping from $\mathbb\R^d$ to $[0,\infty]$, where $[0,\infty]$ is the usual one-point compactification of $[0,\infty)$.
\item[(4)] $\gamma(x)<\infty$ if and only if $x\neq0$ ~{\bf OR} ~$\widehat \gamma\in L^\infty(\R^d)$ and $\gamma(x)<\infty$ when $x\neq 0$ .
\end{itemize}
Note that  the function $\widehat \gamma$ is a tempered distribution, and hence it is also locally integrable. Consequently, the spectral measure $\mu(d\xi)=\widehat\gamma(\xi)d\xi$ is absolutely continuous with respect to the Lebesgue measure.  The function $\gamma(x)$ with the above four properties covers a   number of kernels such as the Riesz kernel $|x|^{-\beta}$ with $\beta\in(0,d)$,  the Cauchy Kernel $\prod_{j=1}^d(x_j^2+c)^{-1}$, the Poisson kernel $(|x|^2+c)^{-(d+1)/2}$, 
the Ornstein-Uhlenbeck kernel $e^{-c|x|^{\alpha}}$ with $\alpha\in (0,2]$, and the kernel  of fractional white noise $\prod_{j=1}^d|x_j|^{-\beta_j}$ with $\beta_j\in(0,1), j=1,\dots, d$, where the generic constant $c$ is a positive number.

For Borel probability measures $\nu_1(dx)$ and $\nu_2(dx)$,  the {\it mutual energy between $\nu_1$ and $\nu_2$ in gauge $\gamma$} is defines as follows (\cite{MR2529437})
$$\mathcal E_\gamma(\nu_1, \nu_2):=\int_{\R^d}\int_{\R^d} \gamma(x-y) \nu_1(dx)\nu_2(dy).$$
When $\nu_1=\nu_2=\nu$, we denote $\mathcal E_{\gamma}(\nu):=\mathcal E_{\gamma}(\nu, \nu)$ and it is called the {\it $\gamma$-energy of the measure $\nu$}. When both $\mathcal E_{\gamma}(\nu_1)$ and $\mathcal E_{\gamma}(\nu_2)$ are finite, by \cite[Lemma 5.6, or Equation (5.37)]{MR2529437},  the following identity holds,
\begin{equation}\label{exptransform'}
\int_{\R^d}\int_{\R^d} \gamma(x-y) \nu_1(dx)\nu_2(dy)=\frac1{(2\pi)^d} \int_{\R^d} \widehat\gamma(\xi)\mathcal F\nu_1(\xi)\overline{\mathcal F\nu_2(\xi)}d\xi,
\end{equation}
where, for a Borel probability measure $\lambda(dx)$, $\mathcal F \lambda(\xi):=\int_{\R^d} e^{-i\xi\cdot x}\lambda(dx)$ is its Fourier transform.

%

A function is called a \emph{kernel of positive type} (\cite[Definition 5.1]{MR2529437}) if it satisfies properties (1) and (2). For kernels of positive type, we have Parseval's Formula and the maximum principle as stated in the following lemma.
\begin{lemma}\label{lemma}
Let $g$ and $f$ be kernels of positive type. Assume that $g\in L^1(\R^d)$. Then if $\int_{\R^d} g(x) f(x) dx<\infty$ or $\int_{\R^d} \widehat g(\xi) \widehat f(\xi)d\xi<\infty$, we have 
\begin{equation}\label{equ2.5'}
\int_{\R^d} g(x) f(x) dx= \int_{\R^d} \widehat g(\xi) \widehat f(\xi)d\xi.
\end{equation}
Furthermore, the following maximum principle holds,
\begin{equation}\label{equ2.6'}
\int_{\R^d} g(x+a) f(x) dx\le \int_{\R^d} g(x) f(x) dx, \,\,\int_{\R^d} \widehat g(\xi+\eta) \widehat f(\xi)d\xi \le \int_{\R^d} \widehat g(\xi) \widehat f(\xi)d\xi, 
\end{equation}
for all $a, \eta \in\R^d.$
\end{lemma}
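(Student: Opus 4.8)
The plan is to prove Parseval's formula \eqref{equ2.5'} first and then to obtain both maximum-principle inequalities \eqref{equ2.6'} from it with little extra work. Since $\gamma$, and hence the kernels under consideration, are symmetric, I may take all functions even, and I will use throughout that both $(g,f)$ and $(\widehat g,\widehat f)$ are pairs of kernels of positive type, i.e.\ nonnegative functions with nonnegative Fourier transforms. This symmetry between the space and frequency sides is what lets me treat the two finiteness hypotheses ($\int gf<\infty$ or $\int\widehat g\,\widehat f<\infty$) on an equal footing.

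For \eqref{equ2.5'} the strategy is Gaussian regularization combined with the classical multiplication formula $\int\widehat u\,v=\int u\,\widehat v$. Fix the Gaussian approximate identity $p_\epsilon$ with $\widehat{p_\epsilon}(\xi)=e^{-\epsilon|\xi|^2}$, so that $p_\epsilon*p_\epsilon=p_{2\epsilon}$ with $\int_{\R^d}p_{2\epsilon}=1$. Because $g\in L^1(\R^d)$ and $\widehat f$ is tempered, the mollified functions $g*p_\epsilon$ and $f*p_\epsilon$ are continuous with Fourier transforms $\widehat g\,e^{-\epsilon|\cdot|^2}$ and $\widehat f\,e^{-\epsilon|\cdot|^2}$, both in $L^1(\R^d)$; hence the multiplication formula applies, and after carrying out the Gaussian integration in the space variable it yields, for every $\epsilon>0$, the exact identity
\[
\int_{\R^d}\int_{\R^d} g(y)\,f(z)\,p_{2\epsilon}(y-z)\,dy\,dz
=\frac{1}{(2\pi)^d}\int_{\R^d}\widehat g(\xi)\,\widehat f(\xi)\,e^{-2\epsilon|\xi|^2}\,d\xi
\]
(in the normalization of \eqref{exptransform'}). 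As $\epsilon\downarrow0$ the integrand on the right increases to $\widehat g\,\widehat f\ge0$, so by monotone convergence the right-hand side converges to $\frac{1}{(2\pi)^d}\int_{\R^d}\widehat g\,\widehat f\,d\xi\in[0,\infty]$.

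It remains to identify the limit of the left-hand side, call it $A_\epsilon=\int_{\R^d} g(y)\,(f*p_{2\epsilon})(y)\,dy$, with $\int gf\,dx$, and I expect this to be the main obstacle. One inequality is immediate: since $f*p_{2\epsilon}\to f$ almost everywhere and $g\ge0$, Fatou's lemma gives $\liminf_\epsilon A_\epsilon\ge\int gf\,dx$. The reverse bound is the delicate point, because $g*f$ need not be continuous a priori. Here the finiteness hypothesis enters: writing $A_\epsilon=(p_{2\epsilon}*(g*f))(0)$ and noting that $g*f$ is positive-definite as $\widehat{g*f}=\widehat g\,\widehat f\ge0$, the condition $\int\widehat g\,\widehat f<\infty$ forces $\widehat g\,\widehat f\in L^1$, so that $g*f$ agrees almost everywhere with a continuous positive-definite function and the approximate identity converges at the origin, $A_\epsilon\to(g*f)(0)=\int gf\,dx$. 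The case $\int gf<\infty$ is handled by running exactly the same argument with the space and frequency variables interchanged, using that $\widehat g,\widehat f$ are themselves kernels of positive type. In either case the two limits coincide, which is \eqref{equ2.5'}.

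Finally, \eqref{equ2.6'} follows from \eqref{equ2.5'}. The spatial translate $g(\cdot+a)$ has Fourier transform $e^{ia\cdot}\widehat g$, so applying \eqref{equ2.5'} (equivalently the multiplication formula) to the pair $(g(\cdot+a),f)$ represents $\int_{\R^d} g(x+a)f(x)\,dx$ as a constant multiple of $\int_{\R^d} e^{ia\cdot\xi}\widehat g(\xi)\widehat f(\xi)\,d\xi$. Since the left-hand side is real and $\widehat g\,\widehat f\ge0$, replacing $e^{ia\cdot\xi}$ by its real part $\cos(a\cdot\xi)\le1$ shows $\int g(x+a)f(x)\,dx\le\int g(x)f(x)\,dx$, the first inequality in \eqref{equ2.6'}. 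The second inequality is obtained by the identical computation with the roles of the space and frequency variables interchanged: inserting $\widehat g(\xi+\eta)=\int g(x)e^{-i(\xi+\eta)\cdot x}\,dx$ and integrating in $\xi$ first reduces $\int\widehat g(\xi+\eta)\widehat f(\xi)\,d\xi$ to a constant multiple of $\int g(x)e^{-i\eta\cdot x}f(x)\,dx$, whose real part is again bounded by the untranslated integral via $\cos\le1$ and $g,f\ge0$.
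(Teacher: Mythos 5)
Your regularized identity
\begin{equation*}
\int_{\R^d} g(y)\,(f*p_{2\epsilon})(y)\,dy=\frac{1}{(2\pi)^d}\int_{\R^d}\widehat g(\xi)\,\widehat f(\xi)\,e^{-2\epsilon|\xi|^2}\,d\xi
\end{equation*}
is sound (Fubini is legitimate because $g\in L^1$ and $\widehat f\,e^{-2\epsilon|\cdot|^2}\in L^1$), and so is the Fatou half, which gives $\int gf\,dx\le\frac{1}{(2\pi)^d}\int\widehat g\,\widehat f\,d\xi$. The genuine gap is the reverse bound, exactly the step you yourself call the delicate point. Knowing that $g*f$ coincides a.e.\ with a continuous function $h_c$ (the inverse transform of $\widehat g\widehat f\in L^1$) only yields $A_\epsilon=\int(g*f)\,p_{2\epsilon}\,dw\to h_c(0)=\frac{1}{(2\pi)^d}\int\widehat g\widehat f\,d\xi$, which is a tautology: it is your identity plus monotone convergence once more. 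It does \emph{not} yield $A_\epsilon\to(g*f)(0)$: an approximate identity converges to the value of the continuous representative, not in general to the pointwise value of an a.e.-equivalent function at one fixed point, and nothing prevents the exceptional null set from containing $0$. The missing equality $h_c(0)=(g*f)(0)$, i.e.\ $\frac{1}{(2\pi)^d}\int\widehat g\widehat f=\int gf$, is precisely \eqref{equ2.5'}, so this step assumes the conclusion. (Equivalently, what you would need is that the pointwise convolution of two kernels of positive type attains its maximum at the origin; but that is the first inequality of \eqref{equ2.6'}, which you in turn derive from \eqref{equ2.5'}.) The same circle reappears when you ``interchange space and frequency'' in the other case, and your derivation of \eqref{equ2.6'} has the further flaw that \eqref{equ2.5'} is applied to the pair $(g(\cdot+a),f)$, which is not a pair of kernels of positive type, since the Fourier transform $e^{ia\cdot\xi}\widehat g(\xi)$ is not nonnegative.

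The ingredient that fills this hole --- and it is how the paper argues --- is a \emph{second one-sided Fatou argument run on the Fourier side}, in place of any continuity or approximate-identity claim. Because $g\in L^1$, $\widehat g$ is continuous, so a mollification $\widehat g*Q_\delta$ by a frequency-side approximate identity $Q_\delta$ converges to $\widehat g$ at \emph{every} point, not merely a.e.; Fatou then gives, for every $\eta$,
\begin{equation*}
\int_{\R^d}\widehat g(\xi+\eta)\,\widehat f(\xi)\,d\xi\;\le\;\liminf_{\delta\downarrow0}\int_{\R^d}(\widehat g*Q_\delta)(\xi+\eta)\,\widehat f(\xi)\,d\xi ,
\end{equation*}
and the dual of your identity (with the Gaussian cutoff now multiplying $gf$ on the space side) turns the right-hand side into a constant times $\int g(x)f(x)\,e^{-i\eta\cdot x}e^{-\delta|x|^2}\,dx$, which is bounded by that constant times $\int gf\,dx$ using $\cos\le1$, $e^{-\delta|x|^2}\le1$ and $g,f\ge0$ --- no reverse Fatou and no domination are required. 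Taking $\eta=0$ and combining with your space-side Fatou inequality gives \eqref{equ2.5'}; retaining the translations ($a$ in your chain, $\eta$ in this one) gives both halves of \eqref{equ2.6'}. Your Gaussian set-up is entirely compatible with this repair; what cannot be salvaged is the argument that the approximate identity converges at the origin to $(g*f)(0)$.
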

\begin{proof}
Without loss of generality, we assume $\int_{\R^d} \widehat g(\xi) \widehat f(\xi) d\xi<\infty. $ Let $p(x)\in C_0^\infty(\R^d)$ be a symmetric probability density function such that $\widehat p(\xi)\ge 0$ for all $\xi\in \R^d$. Denote  $p_\varepsilon(x) =\frac{1}{\varepsilon^d}p(\frac{x}\varepsilon)$.   Note that $g\in L^1(\R^d)$ implies that $\widehat g$ is continuous everywhere. Consequently, $p_\varepsilon*g$ converges to $g$ in $L^1$ and almost everywhere, and $p_\varepsilon*\widehat g$ converges to $\widehat g$ everywhere.  For any fixed $a\in\R^d$,
\begin{align}
&\int_{\R^d} g(x+a) f(x) dx =\int_{\R^d} \lim_{\varepsilon\to 0} (p_\varepsilon*g)(x+a) f(x) dx\notag\\
& \le  \lim_{\varepsilon\to 0}  \int_{\R^d} (p_\varepsilon*g)(x+a) f(x) dx \text{ (Fatou's lemma)} \notag\\
&= \lim_{\varepsilon\to 0}  \int_{\R^d} \widehat p_\varepsilon(\xi) \widehat g(\xi) e^{i\xi\cdot a} \widehat f(\xi) d\xi \text{ (Parseval's Formula)}\notag\\
&\le \lim_{\varepsilon\to 0}  \int_{\R^d} \widehat p_\varepsilon(\xi)\widehat g(\xi) \widehat f(\xi) d\xi= \int_{\R^d} \widehat g(\xi) \widehat f(\xi) d\xi \text{ (dominated convergence theorem)}.\label{equ2.5}
\end{align}
In particular, we have 
\begin{equation}\label{equ2.6}
\int_{\R^d} g(x) f(x) dx\le  \int_{\R^d} \widehat g(\xi) \widehat f(\xi)d\xi<\infty. 
\end{equation}
Therefore, in the same way of obtaining \eqref{equ2.5}, we can show that, for any fixed $\eta\in \R^d$,
\begin{equation}\label{equ2.7}
\int_{\R^d} \widehat g(\xi+\eta) \widehat f(\xi) d\xi\le \int_{\R^d} g(x) f(x) dx, 
\end{equation}
and especially we have,
\begin{equation}\label{equ2.8}
\int_{\R^d} \widehat g(\xi) \widehat f(\xi) d\xi\le \int_{\R^d} g(x) f(x) dx. 
\end{equation}
Hence under the assumption $\int_{\R^d} \widehat g(\xi) \widehat f(\xi) d\xi<\infty$, the Parseval's identity \eqref{equ2.5'} follows from \eqref{equ2.6} and \eqref{equ2.8}, and the maximum principle \eqref{equ2.6'} follows from \eqref{equ2.5}, \eqref{equ2.7} and \eqref{equ2.5'}.
\hfill 
\end{proof}
\noindent This allows us to have the following result for the computation of $\E[\gamma(X_t)]$. Recall that $q_t(x)$ is the transition density function of the L\'evy process $X$ and $\widehat q_t(\xi)=\E\exp(i\xi X_t)=\exp(-t\Psi(\xi)).$
\begin{lemma}\label{expectation}
If $\int_{\R^d} e^{-t\Psi(\xi)}\mu(d\xi)<\infty$ or $\E[\gamma(X_t)]<\infty$,  we have
$$\E[\gamma(X_t)]=\int_{\R^d}\gamma(x) q_t(x) dx=\frac1{(2\pi)^d}\int_{\R^d} e^{-t\Psi(\xi)}\mu(d\xi),$$
and $$\E[\gamma(X_t+a)]\le \E[\gamma(X_t)], \, \forall a\in \R^d.$$
\end{lemma}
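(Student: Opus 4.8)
The plan is to derive both assertions directly from Lemma \ref{lemma} applied to the pair of kernels $g=q_t$ and $f=\gamma$, after rewriting the expectation as a spatial integral against the transition density. The starting observation is the identity $\E[\gamma(X_t)]=\int_{\R^d}\gamma(x)q_t(x)\,dx$, valid by the definition of the transition density $q_t$ of $X_t$ (the integral being read in $[0,\infty]$ when $\gamma$ is singular at the origin).

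First I would check that $q_t$ qualifies as a kernel of positive type, so that Lemma \ref{lemma} is applicable. Indeed $q_t$ is a probability density, hence nonnegative and in $L^1(\R^d)$, and its Fourier transform $\widehat q_t(\xi)=e^{-t\Psi(\xi)}$ is nonnegative precisely because the symmetry of $X$ makes $\Psi$ real and nonnegative; the kernel $\gamma$ is of positive type by the standing assumptions (1)--(2). Under this identification the two alternative hypotheses of the lemma, $\int_{\R^d} g(x)f(x)\,dx<\infty$ and $\int_{\R^d}\widehat g(\xi)\widehat f(\xi)\,d\xi<\infty$, read exactly as $\E[\gamma(X_t)]<\infty$ and $\int_{\R^d}e^{-t\Psi(\xi)}\mu(d\xi)<\infty$, since $\mu(d\xi)=\widehat\gamma(\xi)\,d\xi$. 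Parseval's identity \eqref{equ2.5'} then produces the chain $\E[\gamma(X_t)]=\int_{\R^d}\gamma(x)q_t(x)\,dx=\frac{1}{(2\pi)^d}\int_{\R^d}e^{-t\Psi(\xi)}\mu(d\xi)$, the factor $(2\pi)^{-d}$ being fixed by the Fourier normalization adopted for the spectral measure in \eqref{spectral}.

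For the inequality I would apply the maximum principle \eqref{equ2.6'} with the \emph{same} assignment $g=q_t$, $f=\gamma$, obtaining $\int_{\R^d}q_t(x+a)\gamma(x)\,dx\le\int_{\R^d}q_t(x)\gamma(x)\,dx$ for all $a\in\R^d$. It is important to shift $q_t$ rather than $\gamma$ here, because the maximum principle requires the translated kernel to lie in $L^1$, a property $q_t$ always enjoys but which may fail for $\gamma$ (for instance a Riesz kernel). It then remains to identify the left-hand side with $\E[\gamma(X_t+a)]$: the change of variables $y=x+a$ followed by the reflection $y\mapsto-y$, using that both $q_t$ and $\gamma$ are symmetric, turns $\int_{\R^d}q_t(x+a)\gamma(x)\,dx$ into $\int_{\R^d}q_t(y)\gamma(y+a)\,dy=\E[\gamma(X_t+a)]$, which finishes the argument.

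There is no serious obstacle here; the statement is essentially a transcription of Lemma \ref{lemma} into probabilistic language. The step requiring the most care is the nonnegativity of $\widehat q_t=e^{-t\Psi}$, which is exactly where the symmetry of $X$ is invoked to guarantee that $q_t$ is a kernel of positive type, together with the combination of translation and reflection needed to match the maximum principle to the shifted expectation $\E[\gamma(X_t+a)]$.
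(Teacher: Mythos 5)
Your proposal is correct and is precisely the argument the paper intends: the paper states this lemma without a written proof, presenting it as an immediate consequence of Lemma \ref{lemma}, and your derivation—taking $g=q_t$, $f=\gamma$, verifying that $q_t$ is an $L^1$ kernel of positive type via $\widehat q_t=e^{-t\Psi}\ge 0$, then invoking Parseval's identity \eqref{equ2.5'} and the maximum principle \eqref{equ2.6'}—is exactly that consequence, spelled out. Your care in shifting $q_t$ rather than $\gamma$ and in using the symmetry of both kernels to identify $\int q_t(x+a)\gamma(x)\,dx$ with $\E[\gamma(X_t+a)]$ fills in the details the paper leaves implicit.
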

\noindent Another consequence of Lemma \ref{lemma} is the following property of the spectral measure $\mu.$
\begin{lemma}\label{lem2.3measure}
For any bounded set $A\in\mathcal B(\R^d)$, 
$$\sup_{z\in \R^d}\mu([\xi+z \in A])<\infty.$$
\end{lemma}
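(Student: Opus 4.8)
The plan is to reduce the claim to the translation–contraction (maximum) principle of Lemma \ref{lemma}. First I would rewrite the quantity to be bounded as a genuine translate of $\mu$: since $\{\xi:\xi+z\in A\}=A-z$ and $\mu(d\xi)=\widehat\gamma(\xi)\,d\xi$ with $\widehat\gamma\ge 0$, a change of variables gives
\[
\mu([\xi+z\in A])=\int_{A-z}\widehat\gamma(\xi)\,d\xi=\int_{A}\widehat\gamma(\eta-z)\,d\eta .
\]
Thus it suffices to bound $\int_A\widehat\gamma(\eta-z)\,d\eta$ uniformly in $z\in\R^d$. The idea is to dominate the indicator of $A$ by a fixed smooth positive weight whose presence lets me invoke the maximum principle \eqref{equ2.6'}.

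Concretely, I would fix a Gaussian $\phi(x)=e^{-|x|^2/2}$. It is a kernel of positive type (it is nonnegative and integrable, and $\widehat\phi(\xi)=(2\pi)^{d/2}e^{-|\xi|^2/2}$ is everywhere positive), and crucially $\phi\in L^1(\R^d)$, so it can play the role of the $L^1$ kernel $g$ in Lemma \ref{lemma} — this is important because $\gamma$ itself (e.g. a Riesz kernel) need not be integrable. Since $A$ is bounded, $\overline A$ is compact and $c:=\inf_{\xi\in\overline A}\widehat\phi(\xi)>0$. Using $\widehat\gamma\ge 0$ and $\widehat\phi\ge c$ on $A$, then enlarging the domain to $\R^d$ (the integrand is nonnegative), and finally substituting $\xi=\eta-z$, I obtain
\begin{align*}
\int_A\widehat\gamma(\eta-z)\,d\eta
&\le \frac1c\int_A\widehat\gamma(\eta-z)\widehat\phi(\eta)\,d\eta
\le \frac1c\int_{\R^d}\widehat\gamma(\eta-z)\widehat\phi(\eta)\,d\eta\\
&=\frac1c\int_{\R^d}\widehat\phi(\xi+z)\widehat\gamma(\xi)\,d\xi .
\end{align*}
Now the maximum principle \eqref{equ2.6'}, applied with $g=\phi$, $f=\gamma$ and its translation vector equal to $z$, bounds the last integral by $\frac1c\int_{\R^d}\widehat\phi(\xi)\widehat\gamma(\xi)\,d\xi$, a quantity independent of $z$.

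It remains to check that this $z$-free bound is finite, which is also exactly the hypothesis needed to apply Lemma \ref{lemma}. Here I would use that $\mu(d\xi)=\widehat\gamma(\xi)\,d\xi$ is tempered: since $\widehat\phi$ is a Schwartz function, $\widehat\phi(\xi)\le C(1+|\xi|^2)^{-p}$ for the exponent $p$ witnessing temperedness, whence
\[
\int_{\R^d}\widehat\phi(\xi)\widehat\gamma(\xi)\,d\xi=\int_{\R^d}\widehat\phi(\xi)\,\mu(d\xi)\le C\int_{\R^d}(1+|\xi|^2)^{-p}\,\mu(d\xi)<\infty .
\]
Combining the displays yields $\sup_{z\in\R^d}\mu([\xi+z\in A])\le \frac1c\int_{\R^d}\widehat\phi\,d\mu<\infty$. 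The only genuinely delicate points are bookkeeping ones: orienting the translation so that the $L^1$ kernel $\phi$ (not $\gamma$) sits in the slot where Lemma \ref{lemma} demands integrability, and making sure the positivity of $\widehat\gamma$ justifies both the domination of the indicator of $A$ and the enlargement of the integration domain to all of $\R^d$.
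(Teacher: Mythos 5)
Your proof is correct and is essentially the paper's own argument: both dominate the indicator of the bounded set $A$ by a Gaussian, invoke the maximum principle \eqref{equ2.6'} of Lemma \ref{lemma} with the Gaussian playing the role of the $L^1$ kernel of positive type, and conclude by the temperedness of $\mu$. Your extra bookkeeping (the change of variables, the explicit $\phi$ versus $\widehat\phi$ distinction, and the verification of the finiteness hypothesis of Lemma \ref{lemma}) only makes explicit what the paper leaves implicit.
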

\begin{proof}
 Since $A$ is bounded, there exists a positive constant $C$ such that $I_A(\xi) \le C \exp(-|\xi|^2)$ for all $\xi\in \R^d$. Therefore
\begin{align*}
\mu([\xi+z \in A])=\int_{\R^d}  I_A(\xi+z) \mu(d\xi) \le C \int_{\R^d}  \exp(-|\xi+z|^2) \mu(d\xi) \le C \int_{\R^d}  \exp(-|\xi|^2) \mu(d\xi),
\end{align*}
where the last step follows form Lemma \ref{lemma}, noting that $\exp(-|\xi|^2)\in L^1(\R^d)$ is a kernel of positive type. Then the result follows from the fact that $\int_{\R^d}  \exp(-|\xi|^2) \mu(d\xi)<\infty$.
\end{proof}

Now we briefly recall some useful information in Malliavin calculus.  The reader is referred to \cite{MR2200233} for more details. Let $D$ be the Malliavin derivative, which is an operator mapping from the Sobolev space $\mathbb D^{1,2}\subset L^2(\Omega)$  endowed with the norm$\|F\|_{1,2}=\sqrt{\E[F^2]+\E[\|DF\|_\mathcal H^2]}$
to $L^2(\Omega; \mathcal H)$. The divergence operator $\delta$  is defined as the  the dual operator of $D$ by the duality $\E[F\delta(u)]=\E[\langle DF, u\rangle_\mathcal H]$
for all $F\in \mathbb D^{1,2}$ and $u\in L^2(\Omega;\mathcal H)$ in the domain of $\delta$.  Note that when $u\in \mathcal H,  \delta(u)=W(u)$, and that the operator $\delta$ is also called the \emph{Skorohod integral} since it coincides with the Skorohod integral in the case of Brownian motion. When $F\in \mathbb D^{1,2}$ and $h\in \mathcal H$, we have
\begin{equation}\label{wick}
\delta(Fh)=F\diamond \delta(h),
\end{equation} 
where $\diamond$ means the Wick product.  For $u$ in the domain of $\delta$, we also denote $\delta(u)$ by $\int_0^\infty\int_{\R^d} u(s,y) W^\diamond(ds,dy)$ in this article. 
The following two formulas will be used in the proofs. 
\begin{equation}\label{intbypart'}
FW(h)=\delta(Fh)+\langle DF, h\rangle_\mathcal H,
\end{equation}
for all $F\in \mathbb D^{1,2}$ and  $h\in \mathcal H$. 
\begin{equation}\label{intbypart}
\E\left[FW(h)W(g)\right]=\E\left[\langle D^2F, h\otimes g\rangle_{\H^{\otimes 2}}\right]+\E[F]\langle h, g\rangle_{\H},
\end{equation}
for all $F\in \mathbb D^{2,2}, h\in \mathcal H, g\in \mathcal H.$

The Wiener chaos expansion has been used in, e.g.,  \cite{MR2449130,Balan-Conus-2013}, to deal with (\ref{spde}) in the Skorohod sense. Here we recall some basic facts. Let $F$ be a square integrable random variable measurable with respect to the $\sigma$-algebra generated by $W$. Then $F$ has the chaos expansion
\[F=\E[F]+\sum_{n=1}^\infty F_n,\]
where $F_n$ belongs to the $n$-th Wiener chaos space $\mathbb H_n$. Moreover, $F_n=I_n(f_n)$ for some $f_n\in \mathcal H^{\otimes n}$, and the expansion is unique if we require that all $f_n$'s are symmetric in its $n$ variables.  Here $I_n: \mathcal H^{\otimes n}\to \mathbb H_n$ is the multiple Wiener integral. We have the following isometry
\begin{equation}\label{e2.10}
\E[|I_n(f_n)|^2]=n!\| \widetilde f_n\|^2_{\mathcal H^{\otimes n}},
\end{equation}
where $\widetilde f_n$ is the symmetrization of $f_n$.

\section{On the exponential integrability}\label{sectionei}

In this section, we will show that Hypothesis (I) below is a sufficient and necessary condition such that for all $\lambda, t>0$
\[\E\left[\exp\left(\lambda \int_0^t\int_0^t |r-s|^{-\beta_0}\gamma(X_r-X_s)drds\right)\right]<\infty.\]

\begin{hypothesis}[I]  \label{h1}The spectral measure $\mu$ satisfies
\[\int_{\R^d}\frac1{1+(\Psi(\xi))^{1-\beta_0}}\mu(d\xi)<\infty.\]
\end{hypothesis}
\begin{remark}
When $\L=-(-\Delta)^{\alpha/2}$ for $\alpha\in (0,2]$ and $\gamma(x)$ is $|x|^{-\beta}$ with $\beta\in(0,d)$ or $\prod_{j=1}^d|x_j|^{-\beta_j}$ with $\beta_j\in(0,1), j=1,\dots, d$, Hypothesis (I) is equivalent to $\beta<\alpha(1-\beta_0)$, denoting $\beta=\beta_1+\dots+\beta_d$.
\end{remark}      
\noindent The following proposition shows that Hypothesis (I) is a necessary condition.
\begin{proposition}\label{integrable}
  Hypothesis (I) is a sufficient and necessary condition such that
 \begin{equation}\label{integrability}
 \E\int_0^t\int_0^t |r-s|^{-\beta_0}\gamma(X_r-X_s)drds<\infty,\, \text{ for all $t>0$}.
 \end{equation}

\end{proposition}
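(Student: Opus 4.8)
The plan is to reduce the left-hand side of \eqref{integrability} to a single spectral integral and then compare that integral pointwise with the integrand appearing in Hypothesis (I). Since the integrand $|r-s|^{-\beta_0}\gamma(X_r-X_s)$ is nonnegative, Tonelli's theorem lets me interchange expectation with the time integrals freely, all equalities holding in $[0,\infty]$. Because $X$ is a symmetric L\'evy process, the increment $X_r-X_s$ has the law of $X_{|r-s|}$, so by Lemma \ref{expectation}, whose ``or'' hypothesis is precisely what turns $\E[\gamma(X_v)]=\frac1{(2\pi)^d}\int_{\R^d}e^{-v\Psi(\xi)}\mu(d\xi)$ into an identity valued in $[0,\infty]$, I obtain
\[
\E\int_0^t\int_0^t |r-s|^{-\beta_0}\gamma(X_r-X_s)\,dr\,ds
=\frac1{(2\pi)^d}\int_0^t\int_0^t |r-s|^{-\beta_0}\int_{\R^d}e^{-|r-s|\Psi(\xi)}\mu(d\xi)\,dr\,ds .
\]
A further application of Tonelli interchanges the $\mu$-integral with the time integrals, so the quantity in \eqref{integrability} equals $\frac1{(2\pi)^d}\int_{\R^d}I_t(\xi)\,\mu(d\xi)$, where
\[
I_t(\xi):=\int_0^t\int_0^t |r-s|^{-\beta_0}e^{-|r-s|\Psi(\xi)}\,dr\,ds .
\]

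The heart of the argument is a two-sided estimate showing that $I_t(\xi)$ is comparable, with constants depending only on $t$ and $\beta_0$, to $\bigl(1+\Psi(\xi)^{1-\beta_0}\bigr)^{-1}$. First I would symmetrize in $(r,s)$ and substitute $v=r-s$ to reduce $I_t$ to the one-dimensional integral $I_t(\xi)=2\int_0^t v^{-\beta_0}(t-v)e^{-v\Psi(\xi)}\,dv$, which is legitimate since $\beta_0<1$ makes $v^{-\beta_0}$ locally integrable. Writing $\psi=\Psi(\xi)\ge 0$ and scaling $u=v\psi$ turns the leading part into $\psi^{\beta_0-1}\int_0^{t\psi}u^{-\beta_0}e^{-u}\,du$, whose integral converges to $\Gamma(1-\beta_0)$ as $\psi\to\infty$. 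Bounding $(t-v)$ between $t/2$ on $\{v\le t/2\}$ and $t$ overall then gives, for $\psi\ge 1$, positive constants $c_1\le c_2$ with $c_1\psi^{-(1-\beta_0)}\le I_t(\xi)\le c_2\psi^{-(1-\beta_0)}$; for $0\le\psi\le 1$ one bounds $e^{-v\psi}$ between $e^{-t}$ and $1$ to see that $I_t(\xi)$ stays between two positive constants, exactly as $(1+\psi^{1-\beta_0})^{-1}$ does on that range. Combining the two regimes yields the claimed comparability.

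With this comparison in hand the proposition follows at once: $\frac1{(2\pi)^d}\int_{\R^d}I_t(\xi)\,\mu(d\xi)<\infty$ if and only if $\int_{\R^d}(1+\Psi(\xi)^{1-\beta_0})^{-1}\mu(d\xi)<\infty$, i.e. Hypothesis (I). Since the comparison constants are finite and strictly positive for every fixed $t>0$, finiteness for a single $t$ is equivalent to finiteness for all $t$, which disposes of the quantifier in \eqref{integrability} and delivers necessity and sufficiency simultaneously.

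I expect the only genuinely technical point to be the \emph{uniform} two-sided estimate of $I_t(\xi)$ across the whole range of $\psi=\Psi(\xi)$: matching both the small-$\psi$ and the large-$\psi$ behaviour to the single profile $(1+\psi^{1-\beta_0})^{-1}$ is what makes the equivalence sharp, so that one and the same condition is at once necessary and sufficient. Some care is also needed to run every Fubini/Tonelli interchange and the spectral identity of Lemma \ref{expectation} as statements valued in $[0,\infty]$, so that the argument never tacitly presumes the integrals are finite.
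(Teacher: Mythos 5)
Your proof is correct and takes essentially the same route as the paper: reduce the expectation to the spectral integral $\frac{1}{(2\pi)^d}\int_{\R^d} I_t(\xi)\,\mu(d\xi)$ via Lemma \ref{expectation} and Fubini--Tonelli, then establish a two-sided comparison of $I_t(\xi)$ with $\bigl(1+\Psi(\xi)^{1-\beta_0}\bigr)^{-1}$ --- your inline estimate of $I_t(\xi)$ is precisely the content of the paper's Lemma \ref{lemma0}. The only (harmless) stylistic difference is that you treat the spectral identity as valid in $[0,\infty]$ from the outset, whereas the paper case-splits on which of the two finiteness hypotheses is in force.
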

\begin{proof}
Since $\Psi(\xi)$ goes to infinity as $|\xi|$ goes to infinity, Hypothesis (I) implies $\int_{\R^d} e^{-t\Psi(\xi)}\mu(d\xi)<\infty$ for all $t>0.$ Hence by Lemma \ref{expectation}, either Hypothesis (I) or inequality \eqref{integrability} holds,
\begin{align*} 
\int_0^t\int_0^t |r-s|^{-\beta_0}\E\left[\gamma(X_r-X_s)\right]drds=\frac{1}{(2\pi)^d}\int_0^t\int_0^t |r-s|^{-\beta_0}\int_{\mathbb R^d} e^{-|r-s|\Psi(\xi) }\mu(d\xi)drds,
\end{align*} 
and the result follows from Fubini's theorem and Lemma \ref{lemma0}. \hfill
\end{proof}

\noindent The sufficiency is provided by the following theorem which is the main result in this section.
\begin{theorem}\label{expint}

Let the measure $\mu$ satisfy Hypothesis (I), then for all $t,\lambda>0$,
$$\E\left[\exp\left(\lambda\int_0^t\int_0^t |r-s|^{-\beta_0} \gamma(X_r-X_s)dsdr\right)\right]<\infty.$$
\end{theorem}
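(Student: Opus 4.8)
The plan is to expand the exponential into its power series and reduce the theorem to a moment estimate for
$$H_t:=\int_0^t\int_0^t |r-s|^{-\beta_0}\gamma(X_r-X_s)\,dr\,ds.$$
Since $H_t\ge 0$, Tonelli gives $\E[\exp(\lambda H_t)]=\sum_{n\ge 0}\frac{\lambda^n}{n!}\E[H_t^n]$ with all terms nonnegative, so it suffices to establish a bound of the form $\E[H_t^n]\le C^n\,(n!)^{\theta}$ for some constant $C=C(t,d,\beta_0,\mu)$ and some exponent $\theta\in(0,1)$. Indeed, the resulting series $\sum_n \lambda^n C^n (n!)^{\theta-1}$ then converges for every $\lambda>0$ because $\theta-1<0$, which is exactly the assertion. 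Thus everything hinges on controlling the growth in $n$ of $\E[H_t^n]$, and on showing that Hypothesis (I) forces $\theta<1$ (consistently with the Remark, where in the homogeneous case Hypothesis (I) is precisely the borderline condition $\beta<\alpha(1-\beta_0)$ separating $\theta<1$ from $\theta=1$).

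To estimate the moments I would first write, by Fubini,
$$\E[H_t^n]=\int_{[0,t]^{2n}}\prod_{j=1}^n|r_j-s_j|^{-\beta_0}\,\E\Big[\prod_{j=1}^n\gamma(X_{r_j}-X_{s_j})\Big]\,dr\,ds,$$
and then use the spectral representation $\gamma(x)=\frac1{(2\pi)^d}\int_{\R^d}e^{i\xi\cdot x}\mu(d\xi)$ to turn the joint expectation into an $nd$-fold frequency integral against $\prod_j\mu(d\xi_j)$ involving $\E[\exp(i\sum_j\xi_j\cdot(X_{r_j}-X_{s_j}))]$. Splitting $[0,t]^{2n}$ into the $(2n)!$ regions in which the time points are ordered, say $0\le\tau_1\le\cdots\le\tau_{2n}\le t$ with gaps $\ell_k=\tau_k-\tau_{k-1}$, and invoking the independence and stationarity of the increments of $X$, the characteristic function factorizes as $\prod_{k}\exp(-\ell_k\Psi(u_k))$, where each $u_k$ is the signed partial sum of those frequency variables $\xi_j$ whose defining interval $(s_j\wedge r_j,\,s_j\vee r_j]$ straddles the $k$-th gap. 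This yields, on each ordered region, an explicit $2n$-fold time integral against an $nd$-fold frequency integral.

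The heart of the argument is then to integrate out the frequencies. The obstacle is that the exponents $u_k$ are signed sums of the $\xi_j$'s rather than the individual variables, so the integrand does not factorize directly. Here I would integrate the $\xi_j$ one at a time, peeling off the innermost interval first, and at each stage invoke the maximum principle of Lemma \ref{lemma} in its shifted form $\int_{\R^d}\widehat g(\xi+\eta)\widehat f(\xi)\,d\xi\le\int_{\R^d}\widehat g(\xi)\widehat f(\xi)\,d\xi$: the already-fixed variables enter only as a shift $\eta$ of the active variable, and this shift can be discarded at no cost, reducing the $k$-th factor to a genuine one-variable weight. The accompanying ordered-time integration then collapses, via the one-variable estimate
$$\int_0^t\int_0^t |r-s|^{-\beta_0}e^{-|r-s|\Psi(\xi)}\,dr\,ds\le \frac{C_t}{1+\Psi(\xi)^{1-\beta_0}}$$
that already underlies Proposition \ref{integrable} (Lemma \ref{lemma0}), into integrals of the form $\int_{\R^d}\frac{\mu(d\xi)}{1+\Psi(\xi)^{1-\beta_0}}$, each finite precisely by Hypothesis (I). Verifying at every stage that the functions involved are kernels of positive type and integrable, so that each application of Lemma \ref{lemma} is legitimate, is a routine but essential bookkeeping task.

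It then remains to reassemble the contributions over the $(2n)!$ orderings together with the nested time-simplex integration against the singular weights $|r-s|^{-\beta_0}$. This is where the subfactorial rate is produced: the ordering constraint shrinks each simplex enough that, combining the $\beta_0$-fractional integrability in time with the spatial integrability supplied by Hypothesis (I), the crude $(2n)!$ is beaten down to a bound $\E[H_t^n]\le C^n (n!)^{\theta}$ with $\theta<1$, which by the first paragraph completes the proof. I expect the decoupling together with this final combinatorial accounting to be the main difficulty: one must track exactly which $\xi_j$ enter each $u_k$, keep the combinatorial overhead at the harmless level $C^n$, and confirm that the interplay of the time- and space-singularities keeps the exponent strictly below $1$ under Hypothesis (I). The lack of self-similarity of $X$ and of homogeneity of $\gamma$ is precisely what rules out the scaling and large-deviation arguments available in the Brownian and $\alpha$-stable settings and forces this direct Fourier route.
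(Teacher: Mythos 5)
Your middle section (expand the exponential, estimate moments, go to Fourier space, order the time points, decouple with the maximum principle) is in the same spirit as the paper, which does the decoupling in physical space via Lemma \ref{expectation} and the choice of the left-neighbour point $\eta_j$ rather than by peeling off frequency variables, but these are essentially equivalent devices. The genuine gap is your reduction target. You reduce the theorem to proving $\E[H_t^n]\le C^n(n!)^\theta$ for some $\theta<1$, where $H_t=\int_0^t\int_0^t|r-s|^{-\beta_0}\gamma(X_r-X_s)\,drds$, and this bound does \emph{not} follow from Hypothesis (I). A subfactorial moment bound of exactly this form is what the paper obtains in Remark \ref{remark3.8}, and there it requires the strictly stronger condition $\int_{\R^d}\big(1+\Psi(\xi)^{1-\beta_0-\varepsilon_0}\big)^{-1}\mu(d\xi)<\infty$ for some $\varepsilon_0>0$, which yields $\theta=1-\varepsilon_0$ (and, in fact, integrability of $\exp(H_t^p)$ for $p<1/(1-\varepsilon_0)$, strictly more than the theorem asserts). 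Hypothesis (I) alone provides no such $\varepsilon_0$: the quantity $\varepsilon_N=\int_{[|\xi|\ge N]}\Psi(\xi)^{\beta_0-1}\mu(d\xi)$ may tend to $0$ arbitrarily slowly, and then no step of your argument produces a subfactorial rate. Your own aside — that in the homogeneous case Hypothesis (I) ``is precisely the borderline condition'' — is the telltale sign of the problem: in the homogeneous case a strict inequality $\beta<\alpha(1-\beta_0)$ always leaves room for some $\varepsilon_0>0$, but the entire point of this theorem is the general non-homogeneous, non-self-similar setting where no such room exists. This is also why the paper's remark following the theorem calls the result surprising: one cannot expect tails better than exponential here, i.e.\ one cannot expect moments better than factorial.

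What actually makes the exponential series converge in the paper is a splitting of the spectral measure at a cutoff $N$, an ingredient your proposal never introduces. With $\varepsilon_N$ as above and $m_N=\mu([|\xi|\le N])$, Proposition \ref{hhnt} gives, for every $N$,
\begin{equation*}
\E\big[H_t^n\big]\;\le\; C^n\,(2n-1)!!\;t^n\sum_{k=0}^n\binom nk\,\frac{\left(\Gamma(1-\beta_0)t^{1-\beta_0}\right)^{k}}{\Gamma(k(1-\beta_0)+1)}\,m_N^{k}\left[A_0\varepsilon_N\right]^{n-k}.
\end{equation*}
Note that the combinatorial factor $(2n-1)!!\sim 2^n n!$ is not ``beaten down'' to $C^n(n!)^\theta$ at all; it is cancelled against the $1/n!$ of the exponential series (the paper uses $\frac{(2n-1)!!}{n!}\binom nk\le 4^n$). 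After this cancellation one is left with a geometric series in $n$, and convergence is forced by the two-parameter structure of the sum: the high-frequency part ($n-k$ factors) carries the arbitrarily small constant $A_0\varepsilon_N$ — given $\lambda$ and $t$ one simply chooses $N$ so large that $4A_0\lambda t\varepsilon_N<1$, which is where Hypothesis (I) enters via $\varepsilon_N\to0$ — while the low-frequency part carries the large constant $m_N$ but is tamed by the subfactorial denominator $\Gamma(k(1-\beta_0)+1)$, so its series converges for every fixed $N$. This interplay (small constant on the factorial piece, subfactorial growth on the piece with the large constant) is the key idea of the proof, and without it the last step of your argument cannot be completed.
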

\begin{remark}
The above theorem, together with Proposition \ref{integrable},   actually declares the equivalence between the integrability and the exponential integrability of $\int_0^t \int_0^t |r-s|^{-\beta_0}\gamma(X_r-X_s)dsdr$. This result is surprising since, for a general nonnegative random variable, its integrability does not imply its exponential integrability.  The equivalence in our situation is mainly a consequence of the Markovian property of the L\'evy process $X$. A result in the same flavor for $\int_0^t f(B_s)ds$ where $B$ is a standard Brownian motion and $f$ is a positive measurable function has been discovered  by Khasminskii \cite{MR0123373} (see, e.g., \cite[Lemma 2.1]{MR1717054}).
\end{remark}

\begin{proof} Note that $\int_0^t\int_0^t |r-s|^{-\beta_0} \gamma(X_r-X_s)dsdr=2\int_0^t\int_0^r |r-s|^{-\beta_0} \gamma(X_r-X_s)dsdr$, and equivalently we will study the exponential integrability of $\int_0^t\int_0^r |r-s|^{-\beta_0} \gamma(X_r-X_s)dsdr$. Inspired by the method in the proof of \cite[Theorem 1]{MR2473265}, we  estimate the $n$-th moments as follows.
\begin{align*}
&\E\left(\int_0^t \int_0^r  |r-s|^{-\beta_0} \gamma(X_r-X_s) dsdr\right)^n =\int_{[0< s< r< t]^n}\E\left(\prod_{j=1}^n|r_j-s_j|^{-\beta_0}\gamma(X_{r_j}-X_{s_j})\right)dsdr\\
=&n! \int_{[0< s< r< t]^n\cap [0<r_1< r_2\dots< r_n< t]}\E\left(\prod_{j=1}^n|r_j-s_j|^{-\beta_0}\gamma(X_{r_j}-X_{s_j})\right)dsdr\\
\le &n! \int_{[0< s< r<t]^n\cap [0< r_1< r_2\dots< r_n< t]}\prod_{j=1}^n|r_j-\eta_j|^{-\beta_0}\E\left[\gamma(X_{r_j}-X_{\eta_j})\right]dsdr.
\end{align*}
The last inequality, where $\eta_j$ is the point in the set $\{r_{j-1}, s_j, s_{j+1}, \dots, s_n\}$ which is  closest  to $r_j$ from the left, holds since $\E\left[\gamma(X_{r_j}-X_{s_j})\right]=\E\left[\gamma(X_{r_j}-X_{\eta_j}+X_{\eta_j}-X_{s_j})\right]\le \E\left[\gamma(X_{r_j}-X_{\eta_j})\right]$ by the independent increment property of $X$ and Lemma \ref{expectation}. Note that $dsdr$ actually means $ds_1\dots ds_n dr_1\dots dr_n$ in the above last three integrals. Throughout the article, we will take this kind of abuse of the notation for simpler exposition. 

Fix the points $r_1< \dots< r_n$, we can decompose the set $ [0< s<r< t]^n \cap [0< r_1< r_2\dots< r_n< t]$ into $(2n-1)!!$ disjoint subsets depending on which interval the $s_i$'s are placed in.  More precisely, $s_1$ must be in $(0, r_1)$, while $s_2$ could be in $(0, s_1)$, $(s_1, r_1)$ or $(r_1, r_2)$. Similarly, there are $(2j-1)$ choices to place $s_j$.  Over each subset, we denote the integral by 

$$I_\sigma:=\int_{[0< z_1< \dots< z_{2n}< t]} \prod_{j=1}^n|z_{\sigma(j)}-z_{\sigma(j)-1}|^{-\beta_0}\E\left[\gamma(X_{z_{\sigma(j)}}-X_{z_{\sigma(j)-1}})\right]dz,$$
where $\sigma_{(1)}<\dots<\sigma_{(n)}$ are $n$ distinct elements in the set $\{2,3,\dots, 2n\}$. Hence
\begin{equation}\label{nmoment}
\E\left[\left(\int_0^t\int_0^t |r-s|^{-\beta_0} \gamma(X_r-X_s)dsdr\right)^n\right]\le n! \times \bigg[\text{sum of the }  (2n-1)!! \text{ terms of } I_\sigma's \bigg].
\end{equation}

Next, for fixed $n$, we will provide a uniform upper bound for all $I_{\sigma}'s$. Noting that $X_{z_{\sigma(j)}}-X_{z_{\sigma(j)-1}}\overset{d}{=}X_{z_{\sigma(j)}-z_{\sigma(j)-1}}$ and letting $y_j=z_j-z_{j-1}$, we have
\begin{align}
I_\sigma=&\int_{[0< y_1+y_2+\dots+y_{2n}< t, \, 0< y_1,\cdots, y_{2n}< t]} \prod_{j=1}^n|y_{\sigma(j)}|^{-\beta_0}\E[\gamma(X_{y_{\sigma(j)}})]dy\notag\\
\le& \frac{t^n}{n!}\int_{[0< y_1+y_2+\dots+y_{n}< t,\, 0< y_1,\cdots, y_{n}< t]} \prod_{j=1}^n|y_i|^{-\beta_0}\E[\gamma(X_{y_j})]dy\notag\\
=& \frac{t^n}{n!}\int_{[0< z_1< \dots< z_{n}< t]} \prod_{j=1}^n|z_j-z_{j-1}|^{-\beta_0}\E[\gamma(X_{z_{j}-z_{j-1}})]dz\notag\\
= &\frac{t^n}{n!}\int_{[0< z_1< \dots< z_{n}< t]} \int_{\mathbb R^{nd}}\prod_{j=1}^n|z_j-z_{j-1}|^{-\beta_0} e^{-(z_j-z_{j-1})\Psi(\xi_j)}\mu(d\xi)dz. \label{I}
\end{align}
Note that
\begin{align}
&\int_{[0< z_1< \dots< z_{n}< t]} \int_{\mathbb R^{nd}}\prod_{j=1}^n|z_j-z_{j-1}|^{-\beta_0} e^{-(z_j-z_{j-1})\Psi(\xi_j)}\mu(d\xi)dz\notag\\
=&\int_{\Omega_{t}^{n}} \int_{\mathbb R^{nd}}\prod_{j=1}^ns_j^{-\beta_0} e^{-s_j\Psi(\xi_j)}\mu(d\xi)ds,\label{e.3.3}
\end{align}
where 
\begin{equation}\label{omega}
\Omega_t^n=\left\{(s_1,\dots, s_n)\in [0,\infty)^n: \sum_{j=1}^n s_j\le t\right\}.
\end{equation}
For fixed large $N$, denote 
\begin{equation}\label{e3.5}
\varepsilon_N=\int_{[|\xi|\ge N] } \frac{1}{(\Psi(\xi))^{1-\beta_0}} \mu(d\xi), ~ \text{ and } ~ m_N=\mu([|\xi|\le N]).
\end{equation}
Thus, by (\ref{nmoment}), (\ref{I}), (\ref{e.3.3}) and Proposition \ref{hhnt}, we have 
\begin{align}
 &\E\left[\left(\lambda\int_0^t\int_0^t |r-s|^{-\beta_0} \gamma(X_r-X_s)dsdr\right)^n\right]\notag\\
 \le& (2n-1)!!\lambda^nt^n\sum_{k=0}^n \binom nk \frac{\left(\Gamma(1-\beta_0)t^{1-\beta_0}\right)^{k}}{\Gamma(k(1-\beta_0)+1)}m_N^{k}\left[A_0 \varepsilon_N\right]^{n-k}.\label{nmoment'}
\end{align}
Now, for fixed $t$ and $\lambda$, we can choose $N$ sufficiently large such that $4A_0 \lambda t\varepsilon_N<1.$ Consequently, 
\begin{align*}
 &\E\left[\exp\left(\lambda\int_0^t\int_0^t |r-s|^{-\beta_0} \gamma(X_r-X_s)dsdr\right)\right]\\
 \le &\sum_{n=0}^\infty \lambda^n t^n \frac{(2n-1)!!}{n!}\sum_{k=0}^n \binom nk \frac{\left(\Gamma(1-\beta_0)t^{1-\beta_0}\right)^{k}}{\Gamma(k(1-\beta_0)+1)}m_N^{k}\left[A_0\varepsilon_N\right]^{n-k}\\
 = &\sum_{k=0}^\infty \frac{\left(\Gamma(1-\beta_0)t^{1-\beta_0}\right)^{k}}{\Gamma(k(1-\beta_0)+1)}m_N^{k} \left[A_0 \varepsilon_N\right]^{-k}  \sum_{n=k}^\infty \lambda^n t^n \frac{(2n-1)!!}{n!}  \binom nk \left[A_0 \varepsilon_N\right]^{n}\\
\le &\sum_{k=0}^\infty \frac{\left(\Gamma(1-\beta_0)t^{1-\beta_0}\right)^{k}}{\Gamma(k(1-\beta_0)+1)}m_N^{k} \left[A_0 \varepsilon_N\right]^{-k}  \sum_{n=k}^\infty \left[4\lambda A_0 t\varepsilon_N\right]^{n}\\ 
=&\frac{1}{1-4\lambda A_0 t\varepsilon_N} \sum_{k=0}^\infty \frac{\left(\Gamma(1-\beta_0)t^{1-\beta_0}\right)^{k}}{\Gamma(k(1-\beta_0)+1)}\left(4\lambda t m_N\right)^{k} <\infty,
\end{align*}
where in the second inequality we used the estimate $\frac{(2n-1)!!}{n!}  \binom nk\le 2^n \cdot 2^n=4^n.$
The proof is concluded. 
\end{proof}

The following proposition, which plays a key role in this article, is a generalized version of Lemma 3.3 in \cite{HHNT}.
\begin{proposition}\label{hhnt}
For $\beta_0\in[0,1)$, assume  
\[\int_{\R^d} \frac{1}{1+(\Psi(\xi))^{1-\beta_0}}\mu(d\xi)<\infty.\]  Then there exists a positive constant $A_0$ depending on $\beta_0$ only such that for all $N>0$,
\[\int_{\Omega_t^n}\int_{\R^{nd}} \prod_{j=1}^nr_j^{-\beta_0} e^{-r_j\Psi(\xi_j)}\mu(d\xi)dr\le \sum_{k=0}^n \binom nk \frac{\left(\Gamma(1-\beta_0)t^{1-\beta_0}\right)^{k}}{\Gamma(k(1-\beta_0)+1)}m_N^{k}\left[A_0\varepsilon_N\right]^{n-k},\]
where $\varepsilon_N$ and $m_N$ are given by (\ref{e3.5}), and $\Omega_t^n$ is given by (\ref{omega}).
\end{proposition}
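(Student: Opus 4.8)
The plan is to exploit the complete product structure of the integrand: the factor $\prod_{j=1}^n r_j^{-\beta_0} e^{-r_j \Psi(\xi_j)}$ splits across the pairs $(r_j, \xi_j)$, so the only coupling between the $n$ pairs comes from the simplex constraint $\sum_{j=1}^n r_j \le t$ defining $\Omega_t^n$. First I would split each spatial integral at the level $N$, writing $\int_{\R^d} = \int_{[|\xi| \le N]} + \int_{[|\xi| > N]}$ for each $\xi_j$. Expanding the resulting product of $n$ binomials produces $2^n$ terms indexed by the subset $S \subseteq \{1,\dots,n\}$ of coordinates sent to the low-frequency region $[|\xi| \le N]$; by symmetry there are $\binom{n}{k}$ such terms with $|S| = k$, which accounts for the binomial coefficient in the claim. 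Since every integrand here is nonnegative, Tonelli's theorem licenses all the interchanges of summation and integration below.

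For a fixed $S$ with $|S| = k$, I would treat the two groups of coordinates differently. For $j \in S$ I simply bound $e^{-r_j \Psi(\xi_j)} \le 1$, so that $\int_{[|\xi_j| \le N]} e^{-r_j \Psi(\xi_j)} \mu(d\xi_j) \le m_N$ uniformly in $r_j$. For $j \in S^c$ I keep the exponential and invoke the key one-sided relaxation: because the integrand is nonnegative and $\Omega_t^n \subseteq \{\sum_{j \in S} r_j \le t\} \times [0,\infty)^{S^c}$, I enlarge the domain so that each high-frequency time variable $r_j$, $j \in S^c$, ranges over all of $(0,\infty)$. This factorizes the high-frequency coordinates, and the elementary Gamma integral $\int_0^\infty r^{-\beta_0} e^{-r \Psi(\xi)} dr = \Gamma(1-\beta_0) \Psi(\xi)^{-(1-\beta_0)}$, followed by integration over $[|\xi| > N]$, yields a factor $\Gamma(1-\beta_0) \varepsilon_N$ for each $j \in S^c$.

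The low-frequency time variables, by contrast, retain the constraint $\sum_{j \in S} r_j \le t$, and the surviving integral is the Dirichlet integral $\int_{\{\sum_{j\in S} r_j \le t\}} \prod_{j \in S} r_j^{-\beta_0} dr = \Gamma(1-\beta_0)^k t^{k(1-\beta_0)} / \Gamma(k(1-\beta_0)+1)$, which supplies precisely the factor $(\Gamma(1-\beta_0) t^{1-\beta_0})^k / \Gamma(k(1-\beta_0)+1)$. Collecting the two groups for a fixed $S$, and then summing over all $S$ of each fixed size $k$, gives the asserted bound with $A_0 = \Gamma(1-\beta_0)$, a constant depending on $\beta_0$ only.

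The delicate point, and the one I would watch most carefully, is the asymmetric handling of the simplex: one must relax the domain to $(0,\infty)$ only on the high-frequency coordinates $S^c$ while retaining the constraint on the low-frequency coordinates $S$. Relaxing all coordinates would make the low-frequency integrals $\int_0^\infty r^{-\beta_0}dr$ diverge, whereas keeping the constraint on all coordinates would block the factorization that produces $\varepsilon_N$. The inclusion $\Omega_t^n \subseteq \{\sum_{j \in S} r_j \le t\} \times [0,\infty)^{S^c}$ together with nonnegativity of the integrand is exactly what makes this one-sided relaxation legitimate, and the standing assumption $\int_{\R^d}(1+\Psi(\xi)^{1-\beta_0})^{-1}\mu(d\xi)<\infty$ is what guarantees $\varepsilon_N < \infty$, so that each high-frequency factor is finite.
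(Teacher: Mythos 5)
Your proposal is correct and follows essentially the same route as the paper's proof: split each $\xi_j$-integral at frequency $N$, expand over subsets $S$, bound $e^{-r_j\Psi(\xi_j)}\le 1$ on the low-frequency coordinates, use the Dirichlet (Lemma \ref{multii}) integral for the constrained low-frequency times, and reduce the high-frequency times to the Gamma integral giving $A_0=\Gamma(1-\beta_0)$ and the factor $\varepsilon_N$ per coordinate. The only cosmetic difference is that the paper passes through $\Omega_t^n\subset\Omega_t^S\times\Omega_t^{S^c}$ and then bounds $\int_0^t\le\int_0^\infty$, whereas you relax the $S^c$ coordinates to $[0,\infty)$ in one step; the two are equivalent.
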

\begin{proof} The proof essentially follows the approach used in the proof of  \cite[Lemma 3.3]{HHNT}. 

First note that the assumption implies that $\lim\limits_{N\to\infty}\varepsilon_N=0$, and since $\mu(d\xi)$ is a tempered measure, then $m_N<\infty$ for all $N>0$. For a subset $S$ of $\{1,2,\dots, n\}$, we denote its complement by $S^c$, i.e., $S^c:=\{1,2, \dots, n\}\backslash S.$
\begin{align*}
&\int_{\R^{nd}}\int_{\Omega_t^n} \prod_{j=1}^n r_j^{-\beta_0} e^{-r_j\Psi(\xi_j)}dr\mu(d\xi)\\
=&\int_{\R^{nd}}\int_{\Omega_t^n} \prod_{j=1}^n r _j^{-\beta_0} e^{-r_j\Psi(\xi_j)}\left[I_{[|\xi_j|\le N]}+I_{[|\xi_j|> N]}\right]dr\mu(d\xi)\\
=&\sum_{S\subset\{1,2,\dots, n\}} \int_{\R^d}\int_{\Omega_t^n} \prod_{l\in S} r_l^{-\beta_0} e^{-r_l\Psi(\xi_l)}I_{[|\xi_l|\le N]} \prod_{j\in S^c} r _j^{-\beta_0} e^{-r_j\Psi(\xi_j)}I_{[|\xi_j|> N]}dr\mu(d\xi)\\
\le &\sum_{S\subset\{1,2,\dots, n\}} \int_{\R^d}\int_{\Omega_t^n} \prod_{l\in S} r_l^{-\beta_0} I_{[|\xi_l|\le N]} \prod_{j\in S^c} r _j^{-\beta_0} e^{-r_j\Psi(\xi_j)}I_{[|\xi_j|> N]}dr\mu(d\xi).
\end{align*}
Note that $\Omega_t^n\subset \Omega_t^S\times \Omega_t^{S^c},$
where $\Omega_t^I=\{(r_i, i\in I): r_i\ge0, \sum_{j\in I} r_i\le t\}$ for any $I\subset \{1,2,\dots, n\}$. Therefore,
\begin{align*}
&\int_{\R^{nd}}\int_{\Omega_t^n} \prod_{j=1}^n r_j^{-\beta_0} e^{-r_j\Psi(\xi_j)}dr\mu(d\xi)\\
\le &\sum_{S\subset\{1,2,\dots, n\}} \int_{\R^{nd}}\int_{\Omega_t^S\times \Omega_t^{S^c}} \prod_{l\in S} r_l^{-\beta_0} I_{[|\xi_l|\le N]} \prod_{j\in S^c} r _j^{-\beta_0} e^{-r_j\Psi(\xi_j)}I_{[|\xi_j|> N]}dr\mu(d\xi).
\end{align*}
By Lemma \ref{multii}, we have
$$\int_{\Omega_t^S} \prod_{l\in S} r_l^{-\beta_0} dr=\frac{\left(\Gamma(1-\beta_0)t^{1-\beta_0}\right)^{|S|}}{\Gamma(|S|(1-\beta_0)+1)}.$$
On the other hand, there exists $A_0>0$ depending on $\beta_0$ only such that
\begin{align*}
&\int_{\Omega_t^{S^c}}  \prod_{j\in S^c} r _j^{-\beta_0} e^{-r_j\Psi(\xi_j)}dr\le \int_{[0,t]^{|S^c|}}  \prod_{j\in S^c} r _j^{-\beta_0} e^{-r_j\Psi(\xi_j)}dr\\
&\le \prod_{j\in S^c}\int_0^t r^{-\beta_0} e^{-r\Psi(\xi_j)}dr\le  \prod_{j\in S^c} A_0  (\Psi(\xi_j))^{-1+\beta_0},
\end{align*}
where the last equality holds since $\int_0^t r^{-\beta_0} e^{-ar}dr=a^{-1+\beta_0}\int_0^{at} s^{-\beta_0}e^{-s}ds\le a^{-1+\beta_0} \int_0^\infty s^{-\beta_0}e^{-s}ds.$
Therefore,
\begin{align*}
&\int_{\R^{nd}}\int_{\Omega_t^n} \prod_{j=1}^n r_j^{-\beta_0} e^{-r_j\Psi(\xi_j)}dr\mu(d\xi)\\
\le &\sum_{S\subset\{1,2,\dots, n\}}\int_{\R^{nd}} \frac{\left(\Gamma(1-\beta_0)t^{1-\beta_0}\right)^{|S|}}{\Gamma(|S|(1-\beta_0)+1)}\prod_{l\in S}I_{[|\xi_l|\le N]}  \prod_{j\in S^c} A_0(\Psi(\xi_j))^{-1+\beta_0}I_{[|\xi_j|> N]} \mu(d\xi)\\
=& \sum_{S\subset\{1,2,\dots, n\}}\frac{\left(\Gamma(1-\beta_0)t^{1-\beta_0}\right)^{|S|}}{\Gamma(|S|(1-\beta_0)+1)}A_0^{|S^c|} m_N^{|S|}\varepsilon_N^{|S^c|}\\
=&\sum_{k=0}^n\binom nk \frac{\left(\Gamma(1-\beta_0)t^{1-\beta_0}\right)^{k}}{\Gamma(k(1-\beta_0)+1)}A_0^{n-k} m_N^{k}\varepsilon_N^{n-k},
\end{align*}
and the proof is concluded. \hfill
\end{proof} 

\begin{remark}\label{remark3.8}
If we assume the following stronger condition, 
\[\int_{\mathbb R^d} \frac{1}{1+(\Psi(\xi))^{1-\beta_0-\varepsilon_0}}\mu(d\xi)<\infty\]
for some $\varepsilon_0\in(0, 1-\beta_0)$,
we may prove that for all $\lambda, t>0$
\begin{equation}\label{e3.7}
\E\left[\lambda \exp\left(\left|\int_0^t\int_0^t |r-s|^{-\beta_0} \gamma(X_r-X_s)dsdr\right|^p\right)\right]<\infty, \, \mbox{ when } p<\frac{1}{1-\varepsilon_0},
\end{equation}
without involving Proposition \ref{hhnt}. An outline of the proof is as follows.

Now we estimate the integral over $\R^{nd}$  in the last term of (\ref{I}) first. By (\ref{I}) and Lemma \ref{lemma1}, there exists $C>0$ depending only on $1-\beta_0-\varepsilon_0$ and $\mu(d\xi)$, such that
\[I_\sigma\le C^n \frac{t^n}{n!}\int_{[0< z_1< z_2 \dots< z_{n}< t]} \prod_{j=1}^n|z_j-z_{j-1}|^{-\beta_0}\prod_{j=1}^n [1+(z_j-z_{j-1})^{-1+\beta_0+\epsilon}] dz.\]
Denote $\tau=(\tau_1,\dots, \tau_n)$ and $|\tau|=\sum_{j=1}^n \tau_j$.
Then  $$\prod_{j=1}^n [1+(z_j-z_{j-1})^{-1+\beta_0+\epsilon_0}] = \sum_{\tau\in\{0,1\}^n} \prod_{j=1}^n (z_j-z_{j-1})^{\tau_j (-1+\beta_0+\varepsilon_0)}= \sum_{\tau\in\{0,1\}^n} J_\tau=\sum_{m=0}^n\sum_{|\tau|=m} J_\tau.$$
When $|\tau|=m$ and $t\ge 1$, by Lemma \ref{multii}, we have 
\[\int_{[0< z_1< z_2 \dots< z_{n}< t]} \prod_{j=1}^n|z_j-z_{j-1}|^{-\beta_0}J_\tau dz\le \frac{C^n t^{m\varepsilon_0 +(n-m)(1-\beta_0)}}{\Gamma(m\varepsilon_0 +(n-m)(1-\beta_0)+1)}\le \frac{C^nt^{n(1-\beta_0)}}{\Gamma (n\varepsilon_0+1)}\, ,\]
noting that $\varepsilon_0<1-\beta_0$.

Note that there are $\binom{n}{m}$ $J_\tau$'s for $|\tau|=m$, and hence 
\begin{align}\label{isigma}
I_\sigma \le  C^n \frac{t^{n(2-\beta_0)}}{n!} \sum_{m=0}^n \binom{n}{m} \frac{1}{\Gamma (n\varepsilon_0+1)}\le C^n \frac{t^{n(2-\beta_0)}}{n!} (n+1) 2^n \frac{1}{(n\varepsilon_0/3)^{n\varepsilon_0}},
\end{align}
where in the last step we use the properties $\binom{n}{m}\le 2^n$ and $\Gamma(x+1)\ge (x/3)^x$.

Combining (\ref{nmoment}) and (\ref{isigma}), we have, for all $\lambda>0$ and  $t>0$, 
\[\E\left[\left(\int_0^t\int_0^t |r-s|^{-\beta_0} \gamma(X_r-X_s)dsdr\right)^{n}\right]\le \left(Ct^{2-\beta_0}\right)^n (n!)^{1-\varepsilon_0},\]
where $C>0$  depends on $\beta_0,\varepsilon_0$ and $\mu(d\xi)$, and then (\ref{e3.7}) follows.

 \end{remark}
\noindent In the rest of this section are some useful lemmas.
\begin{lemma} \label{lemma0}
There exist positive constants $C_1$ and $C_2$ depending on $\beta_0$ only such that 
\[\frac{1}{1+x^{1-\beta_0}}\int_0^t s^{-\beta_0}e^{-s}ds\le \int_0^t s^{-\beta_0}e^{-sx}ds\le \frac{1}{1+x^{1-\beta_0}}(C_1+C_2 t^{1-\beta_0}), \, \forall\, x>0.\]
Similarly, there exist positive constants $D_1$ and $D_2$ depending on $\beta_0$ only such that
\[\frac{2}{1+x^{1-\beta_0}} \int_0^t\int_0^s r^{-\beta_0} e^{-r}drds \le \int_0^t\int_0^t |r-s|^{-\beta_0}e^{-|r-s|x}drds\le \frac{2}{1+x^{1-\beta_0}} (D_1t+ D_2t^{2-\beta_0}),\, \forall\, x>0.\]
\end{lemma}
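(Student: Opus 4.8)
The plan is to reduce both pairs of inequalities to a single scalar estimate on the one-dimensional integral $I_t(x):=\int_0^t s^{-\beta_0}e^{-sx}\,ds$, and then obtain the double-integral bounds by symmetrizing and integrating. The first step is the scaling substitution $u=sx$, which gives
\[
I_t(x)=x^{-(1-\beta_0)}\int_0^{tx}u^{-\beta_0}e^{-u}\,du=:x^{-(1-\beta_0)}\,G(tx),
\qquad G(z):=\int_0^z u^{-\beta_0}e^{-u}\,du.
\]
This identity already explains the weight $\tfrac{1}{1+x^{1-\beta_0}}$: it interpolates between the two regimes $G(tx)\to\Gamma(1-\beta_0)$ as $x\to\infty$ and $G(tx)\sim (tx)^{1-\beta_0}/(1-\beta_0)$ as $x\to0$, matching $I_t(x)\sim\Gamma(1-\beta_0)x^{-(1-\beta_0)}$ and $I_t(x)\sim t^{1-\beta_0}/(1-\beta_0)$ respectively.

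For the upper bound I would show $(1+x^{1-\beta_0})I_t(x)\le C_1+C_2 t^{1-\beta_0}$. Writing $(1+x^{1-\beta_0})I_t(x)=(1+x^{-(1-\beta_0)})G(tx)$, I bound $G$ in two complementary ways: first $G(tx)\le\Gamma(1-\beta_0)$, which controls the term coming from the factor $1$; and second $G(tx)\le\int_0^{tx}u^{-\beta_0}\,du=(tx)^{1-\beta_0}/(1-\beta_0)$, which after multiplication by $x^{-(1-\beta_0)}$ yields $t^{1-\beta_0}/(1-\beta_0)$ and controls the term coming from $x^{-(1-\beta_0)}$. This gives the upper bound with $C_1=\Gamma(1-\beta_0)$ and $C_2=1/(1-\beta_0)$.

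For the lower bound I would prove $(1+x^{1-\beta_0})I_t(x)\ge I_t(1)=\int_0^t s^{-\beta_0}e^{-s}\,ds$ for every $x>0$. Since $1+x^{1-\beta_0}\ge1$, a short case split suffices: for $x\le1$ the map $x\mapsto I_t(x)$ is nonincreasing (its $x$-derivative is $-\int_0^t s^{1-\beta_0}e^{-sx}\,ds<0$), so $I_t(x)\ge I_t(1)$ and the factor $1+x^{1-\beta_0}\ge1$ only helps; for $x\ge1$ I use $(1+x^{1-\beta_0})I_t(x)=(1+x^{-(1-\beta_0)})G(tx)\ge G(tx)\ge G(t)=I_t(1)$, the last inequality because $tx\ge t$ enlarges the domain of integration of a positive integrand. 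This is the one place a single crude estimate fails, so the monotonicity of $I_t$ (equivalently, this mild case split) is the only genuinely delicate point; the main obstacle, such as it is, lies here rather than in any heavy computation.

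Finally, for the double integral I would symmetrize in $(r,s)$ and substitute $w=r-s$ in the inner integral:
\[
\int_0^t\int_0^t |r-s|^{-\beta_0}e^{-|r-s|x}\,dr\,ds
=2\int_0^t\!\!\int_0^r (r-s)^{-\beta_0}e^{-(r-s)x}\,ds\,dr
=2\int_0^t I_r(x)\,dr,
\]
where $I_r(x)=\int_0^r w^{-\beta_0}e^{-wx}\,dw$ is exactly the scalar integral with upper limit $r$. Applying the first part with $t$ replaced by $r$ and integrating over $r\in[0,t]$ then yields both bounds at once: the lower bound is immediate, and the upper bound gives $\frac{2}{1+x^{1-\beta_0}}\int_0^t(C_1+C_2 r^{1-\beta_0})\,dr=\frac{2}{1+x^{1-\beta_0}}\big(C_1 t+\tfrac{C_2}{2-\beta_0}t^{2-\beta_0}\big)$, so that $D_1=\Gamma(1-\beta_0)$ and $D_2=\tfrac{1}{(1-\beta_0)(2-\beta_0)}$ work. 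In short, the entire lemma hinges on the scaling identity together with the two-sided control of $G$, and no step presents a serious difficulty.
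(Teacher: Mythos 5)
Your proof is correct and follows essentially the same route as the paper: the scaling substitution $u=sx$ reducing everything to the incomplete Gamma integral $G(tx)$, two-regime control of $G$ (by $\Gamma(1-\beta_0)$ and by $(tx)^{1-\beta_0}/(1-\beta_0)$), and the symmetrization $\int_0^t\int_0^t|r-s|^{-\beta_0}e^{-|r-s|x}\,dr\,ds=2\int_0^t I_r(x)\,dr$ to deduce the double-integral bounds from the scalar ones. Your only departure is cosmetic but pleasant: distributing the two bounds on $G$ over the two terms of $(1+x^{-(1-\beta_0)})$ lets you avoid the paper's case split at $x=1$ for the upper bound and yields explicit constants.
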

\begin{proof}
A change of variable implies that 
$$\int_0^t s^{-\beta_0}e^{-sx}ds =x^{\beta_0-1}\int_0^{tx} r^{-\beta_0}e^{-r}dr.$$
The first inequality is a consequence of the following observation. When $x\ge1$,
\begin{align*}
x^{\beta_0-1}\int_0^{t} r^{-\beta_0}e^{-r}dr \le x^{\beta_0-1}\int_0^{tx} r^{-\beta_0}e^{-r}dr\le x^{\beta_0-1}\int_0^{\infty} r^{-\beta_0}e^{-r}dr,
\end{align*}
and  when $0<x<1$,
$$\int_0^t s^{-\beta_0}e^{-s}ds\le \int_0^t s^{-\beta_0}e^{-sx}ds\le \int_0^t s^{-\beta_0}ds. $$
The second estimate follows from the first one and the following equality 
$$\int_0^t\int_0^t |r-s|^{-\beta_0}e^{-x|r-s|}dsdr= 2\int_0^{t}\int_0^{r}(r-s)^{-\beta_0}e^{-x(r-s)}dsdr=2\int_0^t \int_0^r s^{-\beta_0} e^{-xs}dsdr.\qedhere$$
\end{proof}
\begin{remark}\label{rm3.3}
 Using similar approach in the above proof, we can show that the two inequalities hold for $$\sup_{a\in\R}\int_0^t s^{-\beta_0} e^{-|s+a|x}ds ~~\text{ and }~~ \sup_{a\in\R}\int_0^t\int_0^t |r-s|^{-\beta_0}e^{-|r-s+a|x}drds$$ as well. It suffices to show that the upper bounds hold. We prove the first one as an illustration.
 When $0<x<1$, 
\[ \int_0^t s^{-\beta_0} e^{-|s+a|x}ds\le  \int_0^ts^{-\beta_0}ds;\]
 when $x\ge1$,
 $$\int_0^t s^{-\beta_0} e^{-|s+a|x}ds\le x^{\beta_0-1}\int_0^\infty s^{-\beta_0}e^{-|s+ax|}ds\le Cx^{\beta_0-1}$$ where   $$C=\sup_{a\in\R}\int_0^\infty s^{-\beta_0}e^{-|s+ax|}ds\le\int_0^1 s^{-\beta_0}ds+\sup_{a\in\R}\int_1^\infty e^{-|s+ax|}ds\le \int_0^1 s^{-\beta_0}ds+\int_{-\infty}^\infty e^{-|s|}ds<\infty,$$
 and the upper bound is obtained.
\end{remark}

%
%

\begin{lemma}\label{lemma1}
Suppose $$\int_{\mathbb R^d} \frac{1}{1+(\Psi(\xi))^{\alpha}}\mu(d\xi)<\infty, $$ for some $\alpha>0$, then  there exists a constant $C>0$ depending on $\mu(d\xi)$ and $\alpha$ only, such that 
\[\int_{\mathbb R^d} e^{-x\Psi(\xi)} \mu(d\xi)\le C(1+x^{-\alpha}),\, \forall x>0.\]
\end{lemma}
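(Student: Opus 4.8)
The plan is to reduce the claim to a single elementary scalar inequality and then exploit the way $x$ enters only through a scaling of $\Psi(\xi)$. The starting point is the observation that $\Psi(\xi)\ge 0$ and that the exponential decays faster than any power, so the quantity $C_\alpha:=\sup_{u\ge 0}e^{-u}\bigl(1+u^{\alpha}\bigr)$ is finite (the function is continuous on $[0,\infty)$, equals $1$ at $u=0$, and tends to $0$ as $u\to\infty$). This gives the pointwise bound $e^{-u}\le C_\alpha\,(1+u^{\alpha})^{-1}$ for every $u\ge 0$, and in particular, substituting $u=x\Psi(\xi)$,
\[
e^{-x\Psi(\xi)}\le \frac{C_\alpha}{1+\bigl(x\Psi(\xi)\bigr)^{\alpha}}=\frac{C_\alpha}{1+x^{\alpha}(\Psi(\xi))^{\alpha}},\qquad \forall\, x>0,\ \xi\in\R^d.
\]
Here $C_\alpha$ depends on $\alpha$ only, which is exactly the kind of constant the statement allows.

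The second step is to factor the $x$-dependence out of the denominator so that the integrand matches the hypothesis $\int_{\R^d}(1+(\Psi(\xi))^{\alpha})^{-1}\mu(d\xi)<\infty$. I would verify the uniform comparison
\[
\frac{1}{1+x^{\alpha}(\Psi(\xi))^{\alpha}}\le \frac{\max\bigl(1,x^{-\alpha}\bigr)}{1+(\Psi(\xi))^{\alpha}}\le \frac{1+x^{-\alpha}}{1+(\Psi(\xi))^{\alpha}},
\]
which follows because $\max(1,x^{-\alpha})\bigl(1+x^{\alpha}(\Psi(\xi))^{\alpha}\bigr)=\max(1,x^{-\alpha})+\max(x^{\alpha},1)(\Psi(\xi))^{\alpha}\ge 1+(\Psi(\xi))^{\alpha}$, each of the two factors being at least $1$. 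If one prefers to avoid the $\max$, the same conclusion is obtained by splitting into the cases $x\ge 1$ (where $1+x^{\alpha}\Psi^{\alpha}\ge 1+\Psi^{\alpha}$ directly) and $0<x<1$ (where $x^{\alpha}+x^{\alpha}\Psi^{\alpha}\le 1+x^{\alpha}\Psi^{\alpha}$ gives the factor $x^{-\alpha}$).

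Finally, I would integrate the combined bound against $\mu(d\xi)$:
\[
\int_{\R^d}e^{-x\Psi(\xi)}\mu(d\xi)\le C_\alpha\bigl(1+x^{-\alpha}\bigr)\int_{\R^d}\frac{1}{1+(\Psi(\xi))^{\alpha}}\,\mu(d\xi),
\]
and set $C:=C_\alpha\int_{\R^d}(1+(\Psi(\xi))^{\alpha})^{-1}\mu(d\xi)$, which is finite by hypothesis and depends only on $\alpha$ and $\mu$, yielding the asserted estimate. I do not anticipate a genuine obstacle here: the only points needing care are the finiteness and $\alpha$-only dependence of $C_\alpha$ (immediate from the rapid decay of $e^{-u}$) and the bookkeeping in the scaling comparison, where the essential feature is that dividing $\Psi$ by nothing but $x$ lets the $x$-factor come out cleanly as $1+x^{-\alpha}$. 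The mild subtlety worth stating explicitly is that the constant is genuinely independent of $x$, so that the bound is uniform in $x>0$ as required by the later applications (for instance through Remark \ref{remark3.8}).
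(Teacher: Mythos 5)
Your proof is correct, and it takes a genuinely different route from the paper's. The paper proves Lemma \ref{lemma1} by a low/high-frequency split: since $\lim_{|\xi|\to\infty}\Psi(\xi)=\infty$, it picks $M$ with $\Psi(\xi)>1$ for $|\xi|>M$, bounds $e^{-x\Psi(\xi)}\le 1$ on $[|\xi|\le M]$ (using that the tempered measure $\mu$ has finite mass on bounded sets), and on $[|\xi|>M]$ uses $\sup_{y\ge0}y^{\alpha}e^{-y}<\infty$ to get $e^{-x\Psi(\xi)}\le C x^{-\alpha}(\Psi(\xi))^{-\alpha}\le 2C x^{-\alpha}\big(1+(\Psi(\xi))^{\alpha}\big)^{-1}$. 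You instead use the single uniform inequality $e^{-u}\le C_\alpha\big(1+u^{\alpha}\big)^{-1}$ with $u=x\Psi(\xi)$, followed by the scaling comparison $\big(1+x^{\alpha}(\Psi(\xi))^{\alpha}\big)^{-1}\le (1+x^{-\alpha})\big(1+(\Psi(\xi))^{\alpha}\big)^{-1}$ (your verification of which is correct), and integrate once. What your argument buys: it needs nothing about $\Psi$ or $\mu$ beyond nonnegativity and finiteness of the hypothesis integral --- in particular it does not invoke $\Psi(\xi)\to\infty$ or finiteness of $\mu$ on bounded sets --- so it is slightly more general and self-contained, and it produces the constant in the clean product form $C=C_\alpha\int_{\R^d}\big(1+(\Psi(\xi))^{\alpha}\big)^{-1}\mu(d\xi)$. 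What the paper's route buys: it reuses the same low/high-frequency decomposition (the $m_N$, $\varepsilon_N$ split) that drives Proposition \ref{hhnt} and Theorem \ref{expint}, keeping the section's arguments stylistically uniform. One shared caveat, not a defect of yours: both constants also depend on $\Psi$ through the hypothesis integral, even though the statement's wording mentions only $\mu$ and $\alpha$.
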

\begin{proof} Since $\lim_{|\xi|\to \infty} \Psi(\xi)=\infty,$ we can choose $M>0$ such that $\Psi(\xi)>1$ when $|\xi|>M$. Clearly 
\[\int_{\mathbb R^d} e^{-x\Psi(\xi)} \mu(d\xi)=\int_{[|\xi|\le M|]} e^{-x\Psi(\xi)} \mu(d\xi)+\int_{[|\xi|>M]} e^{-x\Psi(\xi)} \mu(d\xi).\]
The first integral on the right-hand side is bounded by $\mu([|\xi|\le M])$ which is finite. For the second integral, note that $y^{\alpha}e^{-y}$ is uniformly bounded for all $y\ge0$, and hence there exists a constant $C$ depending on $\alpha$ only such that
\[\int_{[|\xi|>M]} e^{-x\Psi(\xi)} \mu(d\xi)\le C \int_{[|\xi|>M]} x^{-\alpha} (\Psi(\xi))^{-\alpha}\mu(d\xi)\le  x^{-\alpha} \int_{[|\xi|>M]}  \frac{2C}{1+(\Psi(\xi))^{\alpha}}\mu(d\xi).\qedhere\]
\end{proof}

\begin{lemma}\label{multii}
Suppose $\alpha_i\in (-1, 1), i=1, \dots, n$ and let $\alpha=\alpha_1+\dots+\alpha_n$.
Then \[\int_{[0< r_1<\dots<r_n<t]}~ \prod_{i=1}^{n}(r_i-r_{i-1})^{\alpha_i}~dr_1\dots dr_n= \frac{\prod_{i=1}^n\Gamma(\alpha_i+1)t^{\alpha+n}}{\Gamma(\alpha+n+1)},\]
where $\Gamma(x)=\int_0^\infty t^{x-1} e^{-t}dt$ is the Gamma function.
\end{lemma}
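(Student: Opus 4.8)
The plan is to reduce the ordered integral to a Dirichlet integral over a simplex and then pin down the proportionality constant via the Gamma function. First I would perform the change of variables $u_i=r_i-r_{i-1}$ for $i=1,\dots,n$, where $r_0:=0$. This map is triangular with unit Jacobian and carries the region $[0<r_1<\dots<r_n<t]$ onto the open simplex $\{(u_1,\dots,u_n):u_i>0,\ \sum_{i=1}^n u_i<t\}$, so that the left-hand side equals
\[
F(t):=\int_{\{u_i>0,\ \sum_{i=1}^n u_i<t\}}\prod_{i=1}^n u_i^{\alpha_i}\,du_1\cdots du_n.
\]
The hypothesis $\alpha_i>-1$ guarantees that each factor $u_i^{\alpha_i}$ is integrable near $u_i=0$, so that $F(t)$ is finite for every $t>0$. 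A scaling substitution $u_i=t v_i$ then gives $F(t)=t^{\alpha+n}F(1)$, so it remains only to identify the constant $c:=F(1)$.

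To compute $c$ I would evaluate the exponentially weighted integral $\int_{[0,\infty)^n}\prod_{i=1}^n u_i^{\alpha_i}e^{-\sum_i u_i}\,du$ in two ways. On one hand it factorizes, yielding $\prod_{i=1}^n\Gamma(\alpha_i+1)$. On the other hand, pushing the measure $\prod_i u_i^{\alpha_i}\,du$ forward along the map $u\mapsto s=\sum_i u_i$ produces a measure on $(0,\infty)$ whose distribution function is exactly $F(s)=c\,s^{\alpha+n}$; hence its Lebesgue density is $c(\alpha+n)s^{\alpha+n-1}$ and the integral equals $\int_0^\infty e^{-s}\,c(\alpha+n)s^{\alpha+n-1}\,ds=c\,\Gamma(\alpha+n+1)$. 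Equating the two evaluations yields $c=\prod_{i=1}^n\Gamma(\alpha_i+1)/\Gamma(\alpha+n+1)$, and combined with $F(t)=c\,t^{\alpha+n}$ this is precisely the claimed identity.

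The only genuinely delicate point is justifying the pushforward step and the differentiation of $F$; I expect this to be the main obstacle, though a mild one. It is handled cleanly once one notes that $F(s)=c\,s^{\alpha+n}$ is absolutely continuous, so that its Radon--Nikodym density is $F'(s)$, while Tonelli's theorem (all integrands being nonnegative) legitimizes every interchange of integration. As an alternative that avoids the pushforward entirely, one may argue by induction on $n$: integrating out the innermost variable via the Beta integral
\[
\int_0^{r_2} r_1^{\alpha_1}(r_2-r_1)^{\alpha_2}\,dr_1
=r_2^{\alpha_1+\alpha_2+1}\,\frac{\Gamma(\alpha_1+1)\Gamma(\alpha_2+1)}{\Gamma(\alpha_1+\alpha_2+2)}
\]
reduces the $n$-fold integral to an $(n-1)$-fold integral of the same shape, with the two leading exponents merged into $\alpha_1+\alpha_2+1$; the elementary base case $n=1$ then closes the induction. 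Here one needs only $\alpha_i>-1$ for convergence, the upper restriction $\alpha_i<1$ being immaterial for the identity itself.
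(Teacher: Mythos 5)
Your proof is correct, but your primary argument takes a genuinely different route from the paper's. The paper proves the lemma by computing the iterated integral directly, integrating in the order $r_n, r_{n-1},\dots,r_1$ and invoking the beta integral $B(x,y)=\Gamma(x)\Gamma(y)/\Gamma(x+y)$ together with $\Gamma(x+1)=x\Gamma(x)$ at each step --- i.e., exactly the inductive beta-integral scheme you sketch as your \emph{alternative}; your observation that this iteration must be run under the weaker hypothesis $\alpha_i>-1$ (since merged exponents such as $\alpha_1+\alpha_2+1$ can exceed $1$) is precisely what makes that computation legitimate, and it is implicitly needed in the paper's version as well. Your main argument is instead the classical Liouville/Dirichlet method: pass to the simplex via $u_i=r_i-r_{i-1}$, use homogeneity to write $F(t)=c\,t^{\alpha+n}$, and identify $c$ by evaluating $\int_{[0,\infty)^n}\prod_i u_i^{\alpha_i}e^{-\sum_i u_i}\,du$ in two ways --- by factorization, giving $\prod_i\Gamma(\alpha_i+1)$, and by pushing the measure forward to $s=\sum_i u_i$, giving $c\,\Gamma(\alpha+n+1)$ (convergence being ensured by $\alpha+n>0$, which follows from $\alpha_i>-1$). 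All steps check out: the pushforward identity is the standard transfer theorem for nonnegative integrands, and the distribution function $c\,s^{\alpha+n}$ is absolutely continuous with density $c(\alpha+n)s^{\alpha+n-1}$. As for what each approach buys: the paper's computation is short and entirely elementary, requiring nothing beyond Fubini and the one-dimensional beta integral; yours eliminates the induction bookkeeping, makes the structural content (homogeneity plus one normalizing constant) transparent, and yields the identity for all $\alpha_i>-1$ at once, which in particular makes explicit that the upper restriction $\alpha_i<1$ in the lemma's hypothesis is irrelevant to the identity.
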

\begin{proof}
The result follows from a direct computation of the iterated integral with respect to $r_n, r_{n-1}, \dots, r_1$ orderly. The properties  $\Gamma(x+1)=x\Gamma(x)$ and $B(x,y)=\frac{\Gamma(x)\Gamma(y)}{\Gamma(x+y)}$ are used in the computation, where 
 $B(x,y) := \int_0^1 t^{x-1} (1-t)^{y-1} dt $ for $x,y>0$  is the beta function.
\end{proof}

\section{Stratonovich equation}\label{sectionstr}

In this section, we will use the approximation method (\cite{MR2778803, CHS, HHNT}) to study (\ref{spde}) in the Stratonovich sense. 

\subsection{Definition of $ \int_0^t\int_{\R^d} \delta_0(X_{t-r}^x-y)W(dr,dy)$}

Denote $g_\delta(t):=\frac1\delta I_{[0,\delta]}(t)$ for $t\ge 0$ and $p_\varepsilon(x)=\frac1{\varepsilon^{d}}p(\frac{x}{\varepsilon})$ for $x\in \R^d$, where $p(x)\in C_0^{\infty}(\R^d)$ is a symmetric probability density function and its Fourier transform $\widehat p(\xi)\ge 0$  for all $\xi\in\R^d.$ We also have that for all $\xi\in\R^d, \lim_{\varepsilon\to 0+}\widehat p_\varepsilon(\xi) =1.$ 

Let 
\begin{equation}\label{Phi}
\Phi_{t,x}^{\varepsilon, \delta}(r,y):=\int_0^t g_{\delta}(t-s-r)p_\varepsilon(X_s^x-y)ds\cdot I_{[0,t]}(r).
\end{equation}
Formal computations suggest that 
\[\lim_{\varepsilon,\delta\downarrow0}\int_0^t\int_{\R^d}\Phi_{t,x}^{\varepsilon, \delta}(r,y) W(dr,dy) = \int_0^t\int_{\R^d} \delta_0(X_{t-r}^x-y)W(dr,dy), \]
where $\delta_0(x)$ is the Dirac delta function. This formal derivation is validated by the following theorem.
\begin{theorem}\label{appfeyn}
Let the measure $\mu$ satisfy  Hypothesis (I),
then $W(\Phi_{t,x}^{\varepsilon, \delta})$ is well-defined a.s. and  forms a Cauchy sequence in $L^2$ when $(\varepsilon,\delta)\to 0$ with the limit denoted by
\[W(\delta_0(X_{t-\cdot}^x-\cdot)I_{[0,t]}(\cdot))=\int_0^t\int_{\R^d} \delta_0(X_{t-r}^x-y)W(dr,dy).\]
Furthermore, $W(\delta_0(X_{t-\cdot}^x-\cdot)I_{[0,t]}(\cdot))$ is Gaussian distributed conditional on $X$ with variance 
\begin{equation}\label{cc}
\mathrm{Var}\left[W(\delta_0(X_{t-\cdot}^x-\cdot)I_{[0,t]}(\cdot))|X\right]=\int_0^t\int_0^t |r-s|^{-\beta_0} \gamma(X_r-X_s)dsdr.
\end{equation}
\end{theorem}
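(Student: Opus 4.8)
The plan is to exploit that $W$ is independent of $X$: conditionally on the path of $X$, each $\Phi_{t,x}^{\varepsilon,\delta}$ is a deterministic element of $\mathcal H$, so $W(\Phi_{t,x}^{\varepsilon,\delta})$ is a centered Gaussian variable with conditional variance $\|\Phi_{t,x}^{\varepsilon,\delta}\|_{\mathcal H}^2$. Everything thus reduces to analyzing the inner products $\langle \Phi_{t,x}^{\varepsilon,\delta},\Phi_{t,x}^{\varepsilon',\delta'}\rangle_{\mathcal H}$ and showing they converge, in expectation and conditionally a.s., to the Hamiltonian $H:=\int_0^t\int_0^t|r-s|^{-\beta_0}\gamma(X_r-X_s)\,drds$. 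First I would check each $W(\Phi_{t,x}^{\varepsilon,\delta})$ is well defined: from $|\int_0^t g_\delta(t-s-r)e^{-i\xi\cdot X_s^x}ds|\le 1$ one gets $|\widehat\Phi_{t,x}^{\varepsilon,\delta}(r,\xi)|\le\widehat p_\varepsilon(\xi)$, so by \eqref{inner'} the norm $\|\Phi_{t,x}^{\varepsilon,\delta}\|_{\mathcal H}^2$ is bounded by $(2\pi)^{-d}\int_{[0,t]^2}|r-r'|^{-\beta_0}drdr'\int_{\R^d}\widehat p_\varepsilon(\xi)^2\mu(d\xi)<\infty$, since $\widehat p_\varepsilon$ is rapidly decreasing and $\mu$ is tempered.

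For the covariance I would compute the spatial Fourier transform $\widehat\Phi_{t,x}^{\varepsilon,\delta}(r,\xi)=I_{[0,t]}(r)\,\widehat p_\varepsilon(\xi)\int_0^t g_\delta(t-s-r)e^{-i\xi\cdot X_s^x}\,ds$ (using that $p$ is symmetric, so $\widehat p_\varepsilon$ is real and lies in $[0,1]$), substitute it into \eqref{inner'}, and identify the spatial $\xi$-integral via \eqref{spectral}--\eqref{exptransform'} together with $\mu=\widehat\gamma\,d\xi$. This yields, conditionally on $X$,
\[\langle \Phi_{t,x}^{\varepsilon,\delta},\Phi_{t,x}^{\varepsilon',\delta'}\rangle_{\mathcal H}=\int_{[0,t]^2}|r-r'|^{-\beta_0}\int_0^t\int_0^t g_\delta(t-s-r)g_{\delta'}(t-s'-r')\,\gamma_{\varepsilon,\varepsilon'}(X_s^x-X_{s'}^x)\,ds\,ds'\,dr\,dr',\]
where $\gamma_{\varepsilon,\varepsilon'}:=p_\varepsilon*p_{\varepsilon'}*\gamma$ is the (bounded, continuous) mollified kernel. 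Taking expectations and using independence with $\E[e^{-i\xi\cdot(X_s-X_{s'})}]=e^{-|s-s'|\Psi(\xi)}$ gives $\E[\gamma_{\varepsilon,\varepsilon'}(X_s^x-X_{s'}^x)]=(2\pi)^{-d}\int_{\R^d}\widehat p_\varepsilon\widehat p_{\varepsilon'}e^{-|s-s'|\Psi}\mu(d\xi)\le \phi(|s-s'|)$, where $\phi(\tau):=\E[\gamma(X_\tau)]=(2\pi)^{-d}\int e^{-\tau\Psi}\mu$ is finite for $\tau>0$ (Lemma \ref{expectation}, Lemma \ref{lemma1}) and \emph{nonincreasing}, the inequality coming from $0\le\widehat p_\varepsilon\le1$.

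To prove the joint convergence $\E\langle \Phi_{t,x}^{\varepsilon,\delta},\Phi_{t,x}^{\varepsilon',\delta'}\rangle\to\E[H]$ as all four parameters tend to $0$, I would argue by two-sided squeezing. For the upper bound, replace $\E[\gamma_{\varepsilon,\varepsilon'}(X_s^x-X_{s'}^x)]$ by $\phi(|s-s'|)$; carrying out the $s,s',r,r'$ integrations and the change of variables $s-s'=(r'-r)+(a'-a)$ reduces the resulting $\varepsilon$-independent bound to an average over $c\in[-\delta,\delta']$ of $J(c):=\int_{[0,t]^2}|r-r'|^{-\beta_0}\phi(|(r'-r)+c|)\,drdr'$, and since $J(0)=\E[H]<\infty$ by Proposition \ref{integrable}, the upper bound tends to $\E[H]$ provided $\limsup_{c\to0}J(c)\le J(0)$. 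For the lower bound, the integrand converges pointwise (a.e. $r\ne r'$) to $|r-r'|^{-\beta_0}\phi(|r-r'|)$, because $g_\delta,g_{\delta'}$ concentrate $s,s'$ at $t-r,t-r'$ and $\widehat p_\varepsilon\widehat p_{\varepsilon'}\to1$ uniformly on compact $\xi$-ranges, so Fatou gives $\liminf\ge\E[H]$. Hence $\E\langle\cdot\rangle\to\E[H]$, and the three terms of $\E[|W(\Phi_{t,x}^{\varepsilon,\delta})-W(\Phi_{t,x}^{\varepsilon',\delta'})|^2]=\E\|\Phi_{t,x}^{\varepsilon,\delta}\|_{\mathcal H}^2-2\E\langle\Phi_{t,x}^{\varepsilon,\delta},\Phi_{t,x}^{\varepsilon',\delta'}\rangle+\E\|\Phi_{t,x}^{\varepsilon',\delta'}\|_{\mathcal H}^2$ all converge to $\E[H]$, giving the $L^2(\Omega)$-Cauchy property and the limit $W(\delta_0(X_{t-\cdot}^x-\cdot)I_{[0,t]}(\cdot))$. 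Running the same inner-product computation path by path (now with the genuine integrand $\gamma_{\varepsilon,\varepsilon'}(X_s^x-X_{s'}^x)\to\gamma(X_s^x-X_{s'}^x)$, using continuity of $\gamma$ off the origin, property~(3), and $X_u\ne X_{u'}$ for a.e. $(u,u')$, with $H<\infty$ a.s.) shows $\Phi_{t,x}^{\varepsilon,\delta}$ is Cauchy in $\mathcal H$ for a.e. $X$; the conditional limit is therefore an element of $\mathcal H$ with squared norm $H$, and since $W$ is isonormal Gaussian and independent of $X$, its conditional law is centered Gaussian with variance $\|\cdot\|_{\mathcal H}^2=H$, which is \eqref{cc}.

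The step I expect to be the main obstacle is the continuity $\limsup_{c\to0}J(c)\le J(0)$, i.e. the joint passage $\delta,\delta'\downarrow0$: on the diagonal $r=r'$ both the temporal weight $|r-r'|^{-\beta_0}$ and the spatial factor $\phi(|r-r'|)$ blow up, so no naive dominating function survives the time-mollification there, and perturbing the time separation by $c$ merges the two singularities of $|w|^{-\beta_0}$ and $\phi(|w+c|)$. The resolution is structural rather than computational: Hypothesis (I) is exactly the condition (Proposition \ref{integrable}) making $|r-r'|^{-\beta_0}\phi(|r-r'|)$ integrable, so the merged near-diagonal contribution has vanishing mass in the limit; together with the monotonicity of $\phi$ and the uniform bound $0\le\widehat p_\varepsilon\le1$, this is what closes both the upper estimate and the off-diagonal dominated convergence.
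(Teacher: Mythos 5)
Your treatment of well-definedness and of the $L^2(\Omega)$-Cauchy property is correct and runs on essentially the same ingredients as the paper's proof: the Fourier representation \eqref{inner'}, the bounds $0\le\widehat p_\varepsilon\le 1$, the maximum principle, and a domination/squeeze argument for the expected inner products. The shift-continuity $\limsup_{c\to0}J(c)\le J(0)$ that you single out as the main obstacle is indeed available, and can be closed with tools already in the paper: for each fixed $\xi$ the one-dimensional integral $\int_{[0,t]^2}|r-r'|^{-\beta_0}e^{-|r'-r+c|\Psi(\xi)}\,dr\,dr'$ converges to its value at $c=0$ by dominated convergence, while Remark \ref{rm3.3} gives the uniform-in-$c$ bound of order $(1+\Psi(\xi)^{1-\beta_0})^{-1}$, which is $\mu$-integrable under Hypothesis (I); a second application of dominated convergence in $\xi$ then yields $J(c)\to J(0)$ (alternatively, a one-dimensional rearrangement argument gives $J(c)\le J(0)$ for every $c$). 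So up to that point your plan is sound, if sketchy.

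The genuine gap is the final step: the claim that ``running the same inner-product computation path by path'' shows $\Phi_{t,x}^{\varepsilon,\delta}$ is Cauchy in $\H$ for a.e.\ path of $X$. The expectation-level computation cannot be run pathwise, because it hinges on $\E\big[\gamma_{\varepsilon,\varepsilon'}(X_s-X_{s'})\big]\le\phi(|s-s'|)$, i.e.\ the maximum principle of Lemma \ref{expectation}, which is a statement about expectations with no pathwise analogue. Pathwise, what you actually have is a nonnegative integrand converging a.e.\ (off the diagonal) to $|s-s'|^{-\beta_0}\gamma(X_s-X_{s'})$, together with $H<\infty$ a.s.; Fatou then gives only $\liminf\|\Phi_{t,x}^{\varepsilon,\delta}\|_{\H}^2\ge H$, and a.e.\ convergence of nonnegative functions to an integrable limit does \emph{not} imply convergence of the integrals --- mollified mass can pile up at the merging singularity on the diagonal, which is exactly the phenomenon you worried about at the level of means. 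The paper closes this step with Scheff\'e's lemma: the integrand is nonnegative, converges a.e.\ on $\Omega\times[0,t]^2$ (Lemma \ref{lemma4.2}), and its integral over $\Omega\times[0,t]^2$ converges to the integral of the limit --- this is precisely your Step-2 conclusion specialized to $(\varepsilon',\delta')=(\varepsilon,\delta)$, see \eqref{e3.4}; hence the convergence holds in $L^1(\Omega\times[0,t]^2)$, and integrating out $(s_1,s_2)$ gives $\|\Phi_{t,x}^{\varepsilon,\delta}\|_{\H}^2\to H$ in $L^1(\Omega)$. That is enough to conclude: the limit in probability of conditionally centered Gaussian variables is conditionally centered Gaussian with variance the limit of the conditional variances, which proves \eqref{cc}. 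Replace your pathwise-Cauchy paragraph by this Scheff\'e argument (or by a conditional characteristic-function argument combined with Fatou applied both pathwise and under the expectation) and your proof is complete.
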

\begin{proof}
Let $\varepsilon_i, \delta_i, i=1,2$ be positive numbers, 
then by (\ref{inner}) 
\begin{align}\label{e4.3}
\langle \Phi_{t,x}^{\varepsilon_1,\delta_1}, \Phi_{t,x}^{\varepsilon_2,\delta_2}\rangle_\mathcal H=&\int_{[0,t]^4} \int_{\R^{2d}} p_{\varepsilon_1}(X_{s_1}^x-y_1)p_{\varepsilon_2}(X_{s_2}^x-y_2)\gamma(y_1-y_2)\notag\\
&\qquad g_{\delta_1}(t-s_1-r_1)g_{\delta_2}(t-s_2-r_2)
|r_1-r_2|^{-\beta_0}dy_1dy_2dr_1dr_2ds_1ds_2 .
\end{align}
Hence $$\langle \Phi_{t,x}^{\varepsilon_1,\delta_1}, \Phi_{t,x}^{\varepsilon_2,\delta_2}\rangle_\mathcal H\ge0.$$ 
By \cite[Lemma A.3]{MR2778803}, there exists a positive constant $C$ depending on $\beta_0$ only, such that
\begin{equation}\label{gdelta}
\int_{[0,t]^2} g_{\delta_1}(t-s_1-r_1)g_{\delta_2}(t-s_2-r_2)
|r_1-r_2|^{-\beta_0}dr_1dr_2\le C|s_1-s_2|^{-\beta_0}.
\end{equation}
Therefore,
\begin{align}
&\langle \Phi_{t,x}^{\varepsilon_1,\delta_1}, \Phi_{t,x}^{\varepsilon_2,\delta_2}\rangle_\mathcal H\le 
C\int_{[0,t]^2} \int_{\R^{2d}} p_{\varepsilon_1}(X_{s_1}^x-y_1)p_{\varepsilon_2}(X_{s_2}^x-y_2)
|s_1-s_2|^{-\beta_0}dy_1dy_2ds_1ds_2\notag\\
&=\frac{C}{(2\pi)^d}\int_{[0,t]^2} \int_{\R^d} \F\left(p_{\varepsilon_1}(X_{s_1}^x-\cdot)\right)(\xi) \overline{\F\left(p_{\varepsilon_2}(X_{s_2}^x-\cdot)\right)(\xi)}|s_1-s_2|^{-\beta_0}\mu(d\xi)ds_1ds_2 \notag\\
&=\frac{C}{(2\pi)^d} \int_{[0,t]^2} \int_{\R^d} \widehat p_{\varepsilon_1}(\xi)\widehat p_{\varepsilon_2}(\xi)\exp\big(-i\xi\cdot (X_{s_1}-X_{s_2})\big)|s_1-s_2|^{-\beta_0}\mu(d\xi)ds_1ds_2\notag\\
&\le C(\varepsilon_1, \varepsilon_2)\int_{[0,t]^2} |s_1-s_2|^{-\beta_0}ds_1ds_2<\infty.\label{e.3.1}
\end{align}
The second equality above holds because  $\F\left(\phi(\cdot-a)\right)(\xi)=\exp(-ia\cdot \xi)\widehat \phi(\xi)$. Note also that $C(\varepsilon_1, \varepsilon_2)= \int_{\R^d} \widehat p_{\varepsilon_1}(\xi)\widehat p_{\varepsilon_2}(\xi)\mu(d\xi)\le \int_{\R^d} \widehat p_{\varepsilon_1}(\xi) \|p_\varepsilon\|_1\mu(d\xi)=\int_{\R^d}p_{\varepsilon_1}(x)\gamma(x)dx<\infty$.  Hence, for $\varepsilon,\delta>0$, $\Phi_{t,x}^{\varepsilon,\delta}\in \mathcal H$ a.s. and  $W(\Phi_{t,x}^{\varepsilon,\delta})$ is well-defined a.s..

Now we show that $W(\Phi_{t,x}^{\varepsilon,\delta})$ forms a Cauchy sequence in $L^2$ when $(\varepsilon,\delta)\to 0$, for which it suffices to show that $\E[\langle \Phi_{t,x}^{\varepsilon_1,\delta_1}, \Phi_{t,x}^{\varepsilon_2,\delta_2}\rangle_\mathcal H]$ converges as $(\varepsilon_1, \delta_1)$ and $(\varepsilon_2, \delta_2)$ tend to zero. By the formula (\ref{inner'}) for the inner product using Fourier transforms,
\begin{align*}
\langle \Phi_{t,x}^{\varepsilon_1,\delta_1}, \Phi_{t,x}^{\varepsilon_2,\delta_2}\rangle_\mathcal H=&\frac{1}{(2\pi)^d}\int_{[0,t]^4} \int_{\R^d} \F\left(p_{\varepsilon_1}(X_{s_1}^x-\cdot)\right)(\xi) \overline{\F\left(p_{\varepsilon_2}(X_{s_2}^x-\cdot)\right)(\xi)}\\
&\qquad g_{\delta_1}(t-s_1-r_1)g_{\delta_2}(t-s_2-r_2)
|r_1-r_2|^{-\beta_0}\mu(d\xi)dr_1dr_2ds_1ds_2 \\
 =&\frac{1}{(2\pi)^d} \int_{[0,t]^4} \int_{\R^d}\widehat p_{\varepsilon_1}(\xi)\widehat p_{\varepsilon_2}(\xi)\exp\big(-i\xi\cdot (X_{s_1}-X_{s_2})\big)\\
&\qquad g_{\delta_1}(t-s_1-r_1)g_{\delta_2}(t-s_2-r_2)
|r_1-r_2|^{-\beta_0}\mu(d\xi)dr_1dr_2ds_1ds_2.
\end{align*}
By Fubini's theorem and thanks to (\ref{gdelta}) and Proposition \ref{integrable}, we can apply the dominated convergence theorem and get that
\begin{align}\E[\langle \Phi_{t,x}^{\varepsilon_1,\delta_1}, \Phi_{t,x}^{\varepsilon_2,\delta_2}\rangle_\mathcal H]\longrightarrow &\frac1{(2\pi)^d}\int_{[0,t]^2}\int_{\R^d} \E\exp\left(-i\xi\cdot(X_{s_1}-X_{s_2})\right)|s_1-s_2|^{-\beta_0}\mu(d\xi)ds_1ds_2\notag\\
& =\int_{[0,t]^2}|s_1-s_2|^{-\beta_0}\E\gamma(X_{s_2}-X_{s_1})ds_1ds_2\label{e3.4}
 \end{align}
as $(\varepsilon_1, \delta_1)$ and $(\varepsilon_2, \delta_2)$ go to zero.

Finally, conditional on $X$, $W(\Phi_{t,x}^{\varepsilon,\delta})$ is Gaussian and hence the limit (in probability) $W(\delta_0(X_{t-\cdot}^x-\cdot))$ is also Gaussian.  To show the formula (\ref{cc}) for conditional variance, it suffices to show that  
\begin{equation}
\langle \Phi_{t,x}^{\varepsilon,\delta}, \Phi_{t,x}^{\varepsilon,\delta}\rangle_\mathcal H\longrightarrow \int_{[0,t]^2}|s_1-s_2|^{-\beta_0}\gamma(X_{s_2}-X_{s_1})ds_1ds_2\label{e.3.4}
\end{equation} 
in $L^1(\Omega)$ as $(\varepsilon, \delta)\to 0$.  
 Noting that, by Lemma \ref{lemma4.2}, the inside integral in (\ref{e4.3})
\begin{align*}
&\int_{[0,t]^2} \int_{\R^{2d}} p_{\varepsilon}(X_{s_1}^x-y_1)p_{\varepsilon}(X_{s_2}^x-y_2)\gamma(y_1-y_2) \\ &~~~~~~~~~~~~~~g_{\delta}(t-s_1-r_1)g_{\delta}(t-s_2-r_2)
|r_1-r_2|^{-\beta_0}dy_1dy_2dr_1dr_2
\end{align*}
converges to $|s_1-s_2|^{-\beta_0} \gamma(X_{s_1}-X_{s_2})$ a.s. as $(\varepsilon, \delta)$ goes to zero, because of (\ref{e3.4}) we can apply Scheff\'e's lemma to get that the convergence is also in $L^1(\Omega\times [0,t]^2, P\times m)$ where $m$ is the Lebesgue measure on $[0,t]^2$. Consequently it follows that the convergence (\ref{e.3.4}) holds in $L^1(\Omega)$. 
\end{proof}
\begin{lemma}\label{lemma4.2} When $a-b\neq 0$,
$$\lim_{\varepsilon\to0}\int_{\R^{2d}} p_{\varepsilon}(a-y_1)p_{\varepsilon}(b-y_2)\gamma(y_1-y_2)dy_1dy_2=\gamma(a-b).$$
\end{lemma}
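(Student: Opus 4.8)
The plan is to reduce the double integral to a single mollification of $\gamma$ evaluated at the fixed point $a-b$, and then to exploit the continuity and finiteness of $\gamma$ there. First I would perform the change of variables $u=a-y_1$, $v=b-y_2$, which rewrites the integral as
\[
\int_{\R^{2d}} p_\varepsilon(u)\, p_\varepsilon(v)\, \gamma\big((a-b)-(u-v)\big)\, du\, dv .
\]
Integrating out one variable and using the symmetry of $p$ shows that this equals $(q_\varepsilon*\gamma)(a-b)$, where $q_\varepsilon:=p_\varepsilon*p_\varepsilon$ is again a smooth, compactly supported probability density concentrating at the origin as $\varepsilon\to0$. Equivalently, writing $Z_1,Z_2$ for independent random vectors with common density $p$, the integral is the expectation $\E\big[\gamma\big((a-b)-\varepsilon(Z_1-Z_2)\big)\big]$, since $p_\varepsilon$ is the density of $\varepsilon Z_i$. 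I would work with whichever of these two equivalent forms is most convenient.

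The key structural fact I would invoke is that $p\in C_0^\infty(\R^d)$ has compact support, say contained in the ball $B(0,R)$. Consequently $\varepsilon(Z_1-Z_2)$ is supported in $B(0,2R\varepsilon)$, so that for every $\varepsilon$ small enough that $2R\varepsilon<|a-b|/2$, the argument $(a-b)-\varepsilon(Z_1-Z_2)$ stays inside a fixed compact set $K$ that is bounded away from the origin. This is precisely the step that avoids the possible singularity of $\gamma$ at $0$: because $a-b\neq0$, the mollification never probes a neighborhood of the origin once $\varepsilon$ is small.

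On such a compact set $K$ away from the origin, $\gamma$ takes only finite values (by property (4)) and is continuous (by property (3)); a continuous $[0,\infty]$-valued function whose values on a compact set are all finite cannot be unbounded, hence $\gamma\le M$ on $K$ for some constant $M$. I would then combine two observations: pointwise, $\gamma\big((a-b)-\varepsilon(Z_1-Z_2)\big)\to\gamma(a-b)$ as $\varepsilon\to0$ by continuity of $\gamma$ at the point $a-b$; and the uniform bound $\gamma\big((a-b)-\varepsilon(Z_1-Z_2)\big)\le M$ holds for all small $\varepsilon$. The dominated (indeed bounded) convergence theorem then yields
\[
\lim_{\varepsilon\to0}\E\big[\gamma\big((a-b)-\varepsilon(Z_1-Z_2)\big)\big]=\gamma(a-b),
\]
which is the assertion.

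The only delicate point, and the one I would treat most carefully, is the interaction between the singularity of $\gamma$ at the origin and the region over which the mollifiers live; everything hinges on the compact support of $p$ together with the hypothesis $a-b\neq0$, which jointly confine the integration to a region where $\gamma$ is bounded and continuous. If $p$ were merely integrable rather than compactly supported, one would instead have to control the contribution of the tails near the origin using only the local integrability of $\gamma$, but that complication does not arise here.
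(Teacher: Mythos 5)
Your proposal is correct and follows essentially the same route as the paper: both reduce the double integral by a change of variables to the single mollification $\bigl((p*p)_\varepsilon * \gamma\bigr)(a-b)$, and both exploit the compact support of $p$ together with the hypothesis $a-b\neq 0$ so that only values of $\gamma$ on a compact set bounded away from its singularity at the origin ever enter. The only (cosmetic) difference is the closing step: the paper invokes the mean value theorem for integrals, writing the mollified quantity as $\gamma(a_\varepsilon)$ with $a_\varepsilon\to a-b$ and concluding by continuity, whereas you use local boundedness of $\gamma$ near $a-b$ and the bounded convergence theorem---both are valid.
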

\begin{proof}
The change of variables $x_1=y_1-y_2, x_2=y_2$ implies that $\int_{\R^{2d}} p_{\varepsilon}(a-y_1)p_{\varepsilon}(b-y_2)\gamma(y_1-y_2)dy_1dy_2=\int_{\R^{2d}} p_{\varepsilon}(a-x_1-x_2)p_{\varepsilon}(b-x_2)\gamma(x_1)dx_1dx_2=\int_{\R^d} (p_{\varepsilon}*p_{\varepsilon})(a-b-x_1) \gamma(x_1)dx_1=\int_{\R^d}\frac1\varepsilon (p*p)(\frac{a-b-x}\varepsilon)\gamma(x)dx.$ Since the convolution $p*p$ is also a smooth probability density function with compact support, it suffices to prove the following result.
\end{proof}
\begin{lemma} Let $f_\varepsilon(x)=\frac{1}{\varepsilon^d}f(\frac{x}{\varepsilon})$, where $f\in C_0^\infty(\R^d)$ is a symmetric probability density function. Then we have
\[\lim_{\varepsilon\to0}\int_{\R^d} f_\varepsilon(a-x) \gamma(x)dx=\gamma(a), ~\forall a\neq 0.\]
\end{lemma}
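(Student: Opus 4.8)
The plan is to prove the pointwise convergence of the mollified kernel $\int_{\R^d} f_\varepsilon(a-x)\gamma(x)\,dx \to \gamma(a)$ for every fixed $a\neq 0$, using only that $\gamma$ is continuous from $\R^d$ to $[0,\infty]$ (property (3)) and is finite away from the origin under either alternative in property (4), together with local integrability (property (1)). The essential idea is that $f_\varepsilon$ is an approximate identity concentrating mass at the point $a$, and since $a\neq 0$ lies in a neighborhood on which $\gamma$ is finite and continuous, the convolution simply samples $\gamma$ at $a$ in the limit.

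First I would fix $a\neq 0$ and choose $\rho>0$ small enough that the closed ball $\overline{B(a,\rho)}$ stays away from the origin. On this ball $\gamma$ is continuous and \emph{finite-valued} (by property (4), $\gamma(x)<\infty$ for $x\neq0$ in either case), hence bounded and uniformly continuous there. Writing $\int_{\R^d} f_\varepsilon(a-x)\gamma(x)\,dx = \int_{\R^d} f_\varepsilon(z)\gamma(a-z)\,dz$ after the change of variable $z=a-x$, and using that $f$ has compact support so that $f_\varepsilon$ is supported in $B(0,c\varepsilon)$ for some constant $c$, for all $\varepsilon$ small the integration variable $z$ is confined to a tiny ball around the origin and $a-z$ stays in $\overline{B(a,\rho)}$. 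On this region I would exploit that $f_\varepsilon$ integrates to $1$ and estimate
\[
\left|\int_{\R^d} f_\varepsilon(z)\gamma(a-z)\,dz - \gamma(a)\right| \le \int_{\R^d} f_\varepsilon(z)\,|\gamma(a-z)-\gamma(a)|\,dz \le \sup_{|z|\le c\varepsilon}|\gamma(a-z)-\gamma(a)|,
\]
and the right-hand side tends to $0$ by the uniform continuity of $\gamma$ near $a$. This handles the main case directly.

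The one point requiring care, and the step I expect to be the main obstacle, is reconciling the two alternatives in property (4). If $\gamma$ takes the value $+\infty$ only at the origin (the first alternative), the argument above is clean since $\gamma$ is genuinely bounded near any $a\neq 0$. Under the second alternative one has $\widehat\gamma\in L^\infty$ and $\gamma$ finite off the origin, which again gives local boundedness near $a\neq0$, so the same estimate applies; I would simply note that in both cases $\gamma$ is bounded and continuous on $\overline{B(a,\rho)}$, which is all that the approximate-identity estimate uses. A secondary technical point is ensuring the convolution integrals are finite for each $\varepsilon$: since $f_\varepsilon$ is bounded with compact support and $\gamma$ is locally integrable, $\int f_\varepsilon(a-x)\gamma(x)\,dx$ is finite for every $\varepsilon>0$, so no integrability issue arises. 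Thus the proof reduces to the standard approximate-identity lemma applied on a neighborhood where $\gamma$ is well-behaved, and the role of hypotheses (1), (3), (4) is precisely to guarantee that such a neighborhood exists around every $a\neq0$.
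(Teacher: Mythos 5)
Your proof is correct and takes essentially the same route as the paper's: both use the compact support of $f$ to localize the integral to a small ball around $a$, on which $\gamma$ is finite and continuous because $a\neq 0$, and then conclude from $\int_{\R^d} f_\varepsilon = 1$. The only cosmetic difference is in the last step, where the paper invokes the mean value theorem for integrals (writing the integral as $\gamma(a_\varepsilon)$ with $a_\varepsilon$ near $a$) while you use the standard estimate by $\sup_{|z|\le c\varepsilon}|\gamma(a-z)-\gamma(a)|$; the two are interchangeable.
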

\begin{proof}
Suppose that the support of the function $f$ is inside $[-M, M]$. Let the positive number $\varepsilon$ be sufficiently small such that $\gamma(x)$ is continuous on $[a-M\varepsilon, a+M\varepsilon]$. By the mean value theorem, we have 
\begin{align*}
&\int_{\R^d} f_\varepsilon(a-x)\gamma(x)dx=\int_{[a-M\varepsilon, a+M\varepsilon]}f_\varepsilon(a-x)\gamma(x)dx\\
&=\gamma(a_\varepsilon)\int_{[a-M\varepsilon, a+M\varepsilon]}f_\varepsilon(a-x)dx=\gamma(a_\varepsilon),
\end{align*}
where $a_\varepsilon\in [a-M\varepsilon, a+M\varepsilon]$. The result follows if we let $\varepsilon$ go to zero.
\end{proof}

\subsection{Feynman-Kac formula}
 For positive numbers $\varepsilon$ and $\delta$, define
\begin{align} \label{apprw}
\dot{W}^{\epsilon ,\delta
}(t,x):=\int_{0}^{t}\int_{\mathbb{R}^{d}}g_{\delta
}(t-s)p_{\epsilon }(x-y)W(ds,dy)=W(\phi_{t,x}^{\varepsilon, \delta}),
\end{align}
where $$\phi_{t,x}^{\varepsilon, \delta}(s,y)=g_{\delta
}(t-s)p_{\epsilon }(x-y)\cdot I_{[0,t]}(s).$$
Then $\dot{W}^{\epsilon ,\delta
}(t,x)$ exists in the classical sense and it is an approximation of $\dot W(t,x)$. Taking advantage of $\dot{W}^{\epsilon ,\delta
}(t,x)$, we can define the integral $\int_{0}^{T}\int_{\mathbb{R}^d}v(t,x)W(dt,dx)$ in the Stratonovich sense as follows.
\begin{definition}\label{defstra}
\label{def2} Suppose that  $v=\{v(t,x),t\geq 0,x\in
\mathbb{R}^d\}$ is a random field satisfying 
\begin{equation*}
\int_{0}^{T}\int_{\mathbb{R}^d}|v(t,x)|dxdt<\infty, \, \text{ a.s.},
\end{equation*}
and that the limit in probability $\lim\limits_{\epsilon ,\delta \downarrow 0}\int_{0}^{T}\int_{\mathbb{R}^d}v(t,x)
\dot{W}^{\epsilon ,\delta }(t,x)dxdt$  exists. The we denote the limit by 
\begin{equation*}
\int_{0}^{T}\int_{\mathbb{R}^d}v(t,x)W(dt,dx):=\lim_{\epsilon ,\delta \downarrow 0}\int_{0}^{T}\int_{\mathbb{R}^d}v(t,x)
\dot{W}^{\epsilon ,\delta }(t,x)dxdt.
\end{equation*}
and call it Stratonovich integral.
\end{definition}
Let $\mathcal F_t$ be the $\sigma$-algebra generated by $\{W(s,x), 0\le s\le t, x\in \R^d\}$, and we say that a random field $\{F(t,x),t\ge0,x\in \R^d\}$ is adapted  if $\{F(t,x), t\ge0\}$ is adapted to the filtration $\{\mathcal F_t\}_{t\ge 0}$ for all $x\in\R^d$. Denote the convolution between the function $q_t$ and $f$ by $Q_tf$, i.e., $$Q_tf(x):=\int_{\R^d} q_t(x-y)f(y)dy.$$ A mild solution to (\ref{spde}) in the Stratonovich sense is defined as follows. 

\begin{definition} \label{defmildstr} An adapted random field $u=\{u(t,x), t\ge0, x\in \R^d\}$ is a mild solution to (\ref{spde}) with initial condition $u_0\in C_b(\R^d)$, if for all $t\ge0$ and $x\in \R^d$ the following integral equation holds
\begin{equation}\label{mildstra}
u(t,x)=Q_tu_0(x)+\int_0^t\int_{\R^d}q_{t-s}(x-y)u(s,y)W(ds,dy),
\end{equation}
where the stochastic integral is in the Stratonovich sense of Definition \ref{defstra}.
\end{definition}

The following theorem is the main result in this section.
\begin{theorem}\label{thmfk}
Let the measure $\mu$ satisfy Hypothesis (I). Then 
\begin{equation}\label{mildu}
u(t,x)=\E^X\left[u_0(X_t^x)\exp\left(\int_0^t \int_{\R^d}\delta_0(X_{t-r}^x-y)W(dr, dy)\right)\right]
\end{equation}
is well-defined and it is a mild solution to (\ref{spde}) in the Stratonovich sense.
\end{theorem}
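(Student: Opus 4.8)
The plan is to establish Theorem~\ref{thmfk} in two stages: first showing that the candidate functional \eqref{mildu} is well-defined and enjoys the integrability needed to make sense of it, and then verifying that it satisfies the mild integral equation \eqref{mildstra} in the Stratonovich sense of Definition~\ref{defmildstr}. For the well-definedness, I would condition on the L\'evy path $X$. By Theorem~\ref{appfeyn}, the stochastic integral $\int_0^t\int_{\R^d}\delta_0(X_{t-r}^x-y)W(dr,dy)$ exists as an $L^2$ limit and, conditional on $X$, is Gaussian with variance given by \eqref{cc}. Hence the conditional expectation over $W$ of the exponential is $\exp\bigl(\tfrac12\int_0^t\int_0^t|r-s|^{-\beta_0}\gamma(X_r-X_s)\,dr\,ds\bigr)$, and taking the $X$-expectation, finiteness of all moments of $u(t,x)$ reduces precisely to the exponential integrability supplied by Theorem~\ref{expint} under Hypothesis~(I). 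This is where the work of Section~\ref{sectionei} pays off and gives $u(t,x)\in L^p$ for every $p$.

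The second and harder stage is to verify the integral equation. Here I would follow the approximation scheme already set up: replace $\dot W$ by its mollified version $\dot W^{\varepsilon,\delta}$ from \eqref{apprw}, and introduce the approximate Feynman--Kac functional
\[
u^{\varepsilon,\delta}(t,x)=\E^X\!\left[u_0(X_t^x)\exp\!\left(\int_0^t\!\dot W^{\varepsilon,\delta}(t-r,X_r^x)\,dr-\tfrac12\langle\Phi_{t,x}^{\varepsilon,\delta},\Phi_{t,x}^{\varepsilon,\delta}\rangle_{\mathcal H}\right)\right],
\]
or its Stratonovich (non-renormalized) analogue. For the smooth noise $\dot W^{\varepsilon,\delta}$, the equation
\[
\frac{\partial u^{\varepsilon,\delta}}{\partial t}=\L u^{\varepsilon,\delta}+u^{\varepsilon,\delta}\,\dot W^{\varepsilon,\delta}
\]
holds by the classical Feynman--Kac formula for the generator $\L$, which in mild form reads
\[
u^{\varepsilon,\delta}(t,x)=Q_tu_0(x)+\int_0^t\!\int_{\R^d}q_{t-s}(x-y)\,u^{\varepsilon,\delta}(s,y)\,\dot W^{\varepsilon,\delta}(s,y)\,dy\,ds.
\]
The core of the proof is then to pass to the limit $(\varepsilon,\delta)\to0$ in each term. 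The left-hand side $u^{\varepsilon,\delta}(t,x)$ should converge to $u(t,x)$ in $L^2$, which I would obtain by showing the exponents converge jointly using Theorem~\ref{appfeyn} together with uniform integrability coming from the moment bounds of stage one. The delicate term is the stochastic convolution on the right: one must show that $u^{\varepsilon,\delta}(s,y)\dot W^{\varepsilon,\delta}(s,y)$, integrated against the heat kernel $q_{t-s}(x-y)$, converges to the Stratonovich integral $\int_0^t\int_{\R^d}q_{t-s}(x-y)u(s,y)W(ds,dy)$ in the sense of Definition~\ref{defstra}.

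To control that convergence I would use Malliavin calculus, as signalled in the introduction. The strategy is to express the smoothed product via the integration-by-parts/Wick relations \eqref{wick}, \eqref{intbypart'} and \eqref{intbypart}, writing the pointwise product $u^{\varepsilon,\delta}\dot W^{\varepsilon,\delta}$ as a divergence (Skorohod) term plus a trace term $\langle Du^{\varepsilon,\delta},\phi^{\varepsilon,\delta}\rangle_{\mathcal H}$. The Skorohod part should converge to the corresponding Skorohod integral, while the trace term accounts for the difference between the Stratonovich and Skorohod interpretations and converges to an absolutely continuous correction that is exactly absorbed into the Feynman--Kac exponent. The main obstacle I anticipate is establishing the requisite $L^2$-uniform bounds and Cauchy property for these Malliavin derivatives as $(\varepsilon,\delta)\to0$: one needs tight control of $\E[\|Du^{\varepsilon,\delta}(s,y)\|_{\mathcal H}^2]$ and of the cross terms, all uniformly in the mollification parameters, and this is precisely where the exponential integrability from Theorem~\ref{expint} must be invoked repeatedly to dominate the Gaussian exponentials. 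Once these uniform estimates are in hand, dominated convergence and the Cauchy criterion in $L^2$ finish the identification of the limit with the mild solution \eqref{mildstra}.
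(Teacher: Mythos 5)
Your first stage (well-definedness) is correct and matches the paper: conditioning on $X$, Theorem~\ref{appfeyn} gives conditional Gaussianity with variance (\ref{cc}), and Theorem~\ref{expint} under Hypothesis (I) yields all moments. The approximation skeleton of your second stage (mollified noise, classical Feynman--Kac, passage to the limit) is also the paper's. But there is a genuine gap at the decisive step. By Definition~\ref{defstra}, to verify (\ref{mildstra}) you must prove that $\int_0^t\int_{\R^d}q_{t-s}(x-y)\,u(s,y)\,\dot W^{\varepsilon,\delta}(s,y)\,dy\,ds$ --- with the \emph{limit} field $u$ inside --- converges in probability. The approximate equation only tells you that $\int_0^t\int_{\R^d} q_{t-s}(x-y)u^{\varepsilon,\delta}(s,y)\dot W^{\varepsilon,\delta}(s,y)\,dy\,ds=u^{\varepsilon,\delta}(t,x)-Q_tu_0(x)$ converges, so the whole problem reduces to showing
\[
I^{\varepsilon,\delta}:=\int_0^t\int_{\R^d}q_{t-s}(x-y)\bigl(u^{\varepsilon,\delta}(s,y)-u(s,y)\bigr)\dot W^{\varepsilon,\delta}(s,y)\,dy\,ds\longrightarrow 0 \quad\text{in } L^2.
\]
Your plan never confronts this replacement of $u^{\varepsilon,\delta}$ by $u$: decomposing the product $u^{\varepsilon,\delta}\dot W^{\varepsilon,\delta}$ into a Skorohod term plus a trace term via (\ref{intbypart'}) and passing each piece to the limit would only re-identify the limit of $\int q\,u^{\varepsilon,\delta}\dot W^{\varepsilon,\delta}$, which you already know; it says nothing about the integral with $u$ inside, which is what Definition~\ref{defstra} asks for. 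The paper instead attacks $\E[(I^{\varepsilon,\delta})^2]$ head-on with the \emph{second-order} integration-by-parts formula (\ref{intbypart}), splitting it into the terms $J_1^{\varepsilon,\delta}$ and $J_2^{\varepsilon,\delta}$ and controlling them with the uniform moment bounds of Lemma~\ref{lemma4.5} and a generalized dominated convergence argument. Some substitute for this comparison step is indispensable; without it your argument does not close. (If you instead decomposed $u\,\dot W^{\varepsilon,\delta}$, you would get existence of the Stratonovich integral as Skorohod plus trace, but you would still have to identify the sum with $u(t,x)-Q_tu_0(x)$, which again requires either $I^{\varepsilon,\delta}\to0$ or an independent chaos-expansion verification that you have not sketched.)

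Two further slips point to a confusion between the two interpretations of the product. First, your primary formula for $u^{\varepsilon,\delta}$ carries the renormalization $-\tfrac12\langle\Phi_{t,x}^{\varepsilon,\delta},\Phi_{t,x}^{\varepsilon,\delta}\rangle_{\H}$; that is the Skorohod approximation (it is exactly (\ref{e5.9}) in the paper, solving the Wick-product equation (\ref{appspde'})). For the equation with the ordinary product $u^{\varepsilon,\delta}\dot W^{\varepsilon,\delta}$, the classical Feynman--Kac formula gives the non-renormalized exponential $\exp\bigl(W(\Phi_{t,x}^{\varepsilon,\delta})\bigr)$, and it is this functional that converges to (\ref{mildu}); your hedge ``or its Stratonovich (non-renormalized) analogue'' should have been the main statement. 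Second, the claim that the trace term ``is exactly absorbed into the Feynman--Kac exponent'' describes the Skorohod picture, where the exponent in (\ref{fkskr}) carries the correction $-\tfrac12\int_0^t\int_0^t|r-s|^{-\beta_0}\gamma(X_r-X_s)\,dr\,ds$. In the Stratonovich setting of Theorem~\ref{thmfk} the exponent in (\ref{mildu}) has no correction at all; the trace term would remain part of the representation of the Stratonovich integral itself rather than cancel against anything in the exponent.
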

\begin{proof} 
Consider the following approximation of (\ref{spde})
\begin{equation}\label{appspde}
\begin{cases}
u^{\varepsilon, \delta}(t,x)=\L u^{\varepsilon, \delta}(t,x)+u^{\varepsilon, \delta}(t,x)\dot W^{\varepsilon, \delta}(t,x),\\
u^{\varepsilon,\delta}(0,x)=u_0(x).
\end{cases}
\end{equation}
By the classical Feynman-Kac formula, 
\[u^{\varepsilon, \delta}(t,x)=\E^X\left[u_0(X_t^x)\exp\left(\int_0^t \dot W^{\varepsilon, \delta}(r,X_{t-r}^x)dr \right)\right]=\E^X\left[u_0(X_t^x)\exp\left(W(\Phi_{t,x}^{\varepsilon,\delta})\right)\right]\] 
where $\Phi_{t,x}^{\varepsilon,\delta}$ is defined in (\ref{Phi}) and the last equality follows from the stochastic Fubini's theorem, is a mild solution to (\ref{appspde}), i.e.,
\begin{equation}\label{mildapp}
u^{\varepsilon, \delta}(t,x)=Q_tu_0(x)+\int_0^t\int_{\R^d}q_{t-s}(x-y)u^{\varepsilon, \delta}(s,y)\dot W^{\varepsilon, \delta}(s,y)dsdy.
\end{equation}

To prove the result, it suffices to show that as $(\varepsilon,\delta)$ tends to zero, both sides of  (\ref{mildapp}) converge   respectively in probability to those of (\ref{mildstra}) with $u(t,x)$ given in (\ref{mildu}). We split the proof in two steps for easier interpretation.

{\bf Step 1.} First, we show that $u^{\varepsilon, \delta}(t,x)\to u(t,x)$ in $L^p$ for all $p>1.$ By Theorem \ref{appfeyn}, as $(\varepsilon, \delta)\to 0$, $W(\Phi_{t,x}^{\varepsilon, \delta})$ converges to  $W(\delta_0(X_{t-\cdot}^x-\cdot)I_{[0,t]}(\cdot))$ in probability, and hence it suffices to show that  $$\sup\limits_{\varepsilon, \delta>0}\sup\limits_{t\in[0,T],x\in\R^d}\E[|u^{\varepsilon, \delta}(t,x)|^p]<\infty.$$ Note that $W(\Phi_{t,x}^{\varepsilon,\delta})$ is Gaussian conditional on $X$, and hence $$\E\left[\exp\left(pW(\Phi_{t,x}^{\varepsilon, \delta})\right)\right]=\E\left[\exp\left(\frac{p^2}{2}\|\Phi_{t,x}^{\varepsilon, \delta}\|_{\H}^2\right)\right].$$ By (\ref{inner'}) and (\ref{gdelta}), in a similar way of proving (\ref{e.3.1}), we can show that there exists a positive constant $C$ depending on $\beta_0$ only such that
\begin{align*}
 \|\Phi_{t,x}^{\varepsilon, \delta}\|_{\H}^{2}\le C\int_{[0,t]^2}\int_{\R^d}\left(\widehat p_\varepsilon(\xi)\right)^2\exp(-i\xi\cdot(X_r-X_s))|r-s|^{-\beta_0}\mu(d\xi)drds.
\end{align*} 
Therefore,
\begin{align*}
 \E[\|\Phi_{t,x}^{\varepsilon, \delta}\|_{\H}^{2n}]\le& C^n \int_{[0,t]^{2n}}\int_{\R^{nd}}\prod_{j=1}^n\left(\widehat p_\varepsilon(\xi_j)\right)^2\E\exp(-i\sum_{j=1}^n\xi_j\cdot(X_{r_j}-X_{s_j}))\\
 &\qquad \qquad\qquad  \prod_{j=1}^n|r_j-s_j|^{-\beta_0}\prod_{j=1}^n\mu(d\xi_j)dr_jds_j\\
 \le& C^n \int_{[0,t]^{2n}}\int_{\R^{nd}}\E\exp(-i\sum_{j=1}^n\xi_j\cdot(X_{r_j}-X_{s_j}))\prod_{j=1}^n|r_j-s_j|^{-\beta_0}\prod_{j=1}^n\mu(d\xi_j)dr_jds_j\\
 =&\E\left[\left(C \int_0^t \int_0^t |r-s|^{-\beta_0}\gamma(X_r-X_s)drds\right)^n\right].
\end{align*}
The second inequality above holds because $\sup_{\xi\in\R^d}\widehat p_\varepsilon(\xi)\le 1$ and  $\E\exp(-i\sum_{j=1}^n\xi_j\cdot(X_{r_j}-X_{s_j}))$ is a positive real number. Thus there is constant $C>0$ depending only on $\beta_0$ such that 
\[\sup_{\varepsilon,\delta>0}\sup\limits_{t\in[0,T],x\in\R^d}\E\left[\exp\left(\frac{p^2}{2}\|\Phi_{t,x}^{\varepsilon, \delta}\|_{\H}^2\right)\right]\le \E\left[\exp\left(C\frac{p^2}{2}\int_0^t \int_0^t |r-s|^{-\beta_0}\gamma(X_r-X_s)drds \right)\right],\]
where the term on the right-hand side is finite by Theorem \ref{expint}.

{\bf Step 2.} Now by Definition \ref{defstra}, it suffices to show that 
\[I^{\varepsilon, \delta}:=\int_0^t\int_{\R^d}q_{t-s}(x-y)(u^{\varepsilon, \delta}(s,y)-u(s,y))\dot W^{\varepsilon, \delta}(s,y)dsdy\]
converges in $L^2$ to zero. Denoting $v^{\varepsilon,\delta}_{s,y}=u^{\varepsilon, \delta}(s,y)-u(s,y)$ and noting that $\dot W^{\varepsilon, \delta}(s,y)=W(\phi_{s,y}^{\varepsilon,\delta})$ we have
\[\E[(I^{\varepsilon,\delta})^2]=\int_{[0,t]^2}\int_{\R^{2d}} q_{t-s_1}(x-y_1)q_{t-s_2}(x-y_2) \E\left[v^{\varepsilon,\delta}_{s_1, y_1}v^{\varepsilon,\delta}_{s_2, y_2}W(\phi_{s_1,y_1}^{\varepsilon, \delta})W(\phi_{s_2,y_2}^{\varepsilon, \delta})\right]dy_1dy_2ds_1ds_2.\]

Use the following notations $V_{t,x}^{\varepsilon,\delta}(X)=\int_0^t \dot W^{\varepsilon, \delta}(r,X_{t-r}^x)dr=W(\Phi_{t,x}^{\varepsilon,\delta}(X))$, $V_{t,x}(X)=\int_0^t \int_{\R^d}\delta_0(X_{t-r}^x-y)W(dr, dy)=W(\delta_0(X_{t-\cdot}^x-\cdot)I_{[0,t]}(\cdot))$, and \[A^{\varepsilon,\delta}(s_1, y_1, s_2, y_2)= \prod_{j=1}^2 u_0(X_{s_j}^{j}+y_j)\left[\exp\left(V_{s_j,y_j}^{\varepsilon,\delta}(X^j)\right)-\exp\left(V_{s_j,y_j}(X^j)\right)\right],\]
where $X^1$ and $X^2$ are two independent copies of $X$. Then 
\[\E\left[v^{\varepsilon,\delta}_{s_1, y_1}v^{\varepsilon,\delta}_{s_2, y_2}W(\phi_{s_1,y_1}^{\varepsilon, \delta})W(\phi_{s_2,y_2}^{\varepsilon, \delta})\right]=\E^{X^1, X^2}\left[\E^W\left[A^{\varepsilon,\delta}(s_1, y_1, s_2, y_2)W(\phi_{s_1,y_1}^{\varepsilon, \delta})W(\phi_{s_2,y_2}^{\varepsilon, \delta}) \right]\right].\]
By the integration by parts formula (\ref{intbypart}), 
\begin{align*}
&\E^W\left[A^{\varepsilon,\delta}(s_1, y_1, s_2, y_2)W(\phi_{s_1,y_1}^{\varepsilon, \delta})W(\phi_{s_2,y_2}^{\varepsilon, \delta})\right]\\
=&\E^W\left[\langle D^2 A^{\varepsilon,\delta}(s_1, y_1, s_2, y_2), \phi_{s_1,y_1}^{\varepsilon, \delta}\otimes \phi_{s_2,y_2}^{\varepsilon, \delta} \rangle_{\H^{\otimes2}}\right] + \E^W[A^{\varepsilon,\delta}(s_1, y_1, s_2, y_2)]\langle \phi_{s_1,y_1}^{\varepsilon, \delta}, \phi_{s_2,y_2}^{\varepsilon, \delta} \rangle_{\H},
\end{align*}
and hence we have 
\[\E\left[v^{\varepsilon,\delta}_{s_1, y_1}v^{\varepsilon,\delta}_{s_2, y_2}W(\phi_{s_1,y_1}^{\varepsilon, \delta})W(\phi_{s_2,y_2}^{\varepsilon, \delta})\right]= \E[A^{\varepsilon, \delta}(s_1, y_1, s_2, y_2) B^{\varepsilon, \delta}(s_1, y_1, s_2, y_2)]+\E[v^{\varepsilon,\delta}_{s_1, y_1}v^{\varepsilon,\delta}_{s_2, y_2}] \langle \phi_{s_1,y_1}^{\varepsilon, \delta}, \phi_{s_2,y_2}^{\varepsilon, \delta} \rangle_{\H},\]
where 
\begin{align*}
&B^{\varepsilon, \delta}(s_1, y_1, s_2, y_2)\\
=&\sum_{j,k=1}^2 \langle \phi_{s_1,y_1}^{\varepsilon, \delta},  \Phi_{s_j, y_j}^{\varepsilon,\delta}(X^j)-\delta(X^j_{s_j-\cdot}+y_j-\cdot )I_{[0,s_j]}(\cdot)\rangle_\H\langle \phi_{s_2,y_2}^{\varepsilon, \delta},  \Phi_{s_k, y_k}^{\varepsilon,\delta}(X^k)-\delta(X^k_{s_k-\cdot}+y_k-\cdot )I_{[0,s_k]}(\cdot)\rangle_\H.
\end{align*}
Therefore, 
\[\E[(I^{\varepsilon,\delta})^2]=J_1^{\varepsilon, \delta}+J_2^{\varepsilon, \delta},\]
with the notations
\begin{align*}
J_1^{\varepsilon, \delta}=&\int_{[0,t]^2}\int_{\R^{2d}} q_{t-s_1}(x-y_1)q_{t-s_2}(x-y_2) \E[A^{\varepsilon, \delta}(s_1, y_1, s_2, y_2) B^{\varepsilon, \delta}(s_1, y_1, s_2, y_2)]dy_1dy_2ds_1ds_2\\
\le& \int_{[0,t]^2}\int_{\R^{2d}} q_{t-s_1}(x-y_1)q_{t-s_2}(x-y_2) \\
&\qquad\qquad \left(\E[(A^{\varepsilon, \delta}(s_1, y_1, s_2, y_2))^2]\right)^{1/2} 
\left(\E[(B^{\varepsilon, \delta}(s_1, y_1, s_2, y_2))^2]\right)^{1/2}dy_1dy_2ds_1ds_2
\end{align*}
and 
\begin{align*}
J_2^{\varepsilon, \delta}=&\int_{[0,t]^2}\int_{\R^{2d}} q_{t-s_1}(x-y_1)q_{t-s_2}(x-y_2) \E[v^{\varepsilon,\delta}_{s_1, y_1}v^{\varepsilon,\delta}_{s_2, y_2}] \langle \phi_{s_1,y_1}^{\varepsilon, \delta}, \phi_{s_2,y_2}^{\varepsilon, \delta} \rangle_{\H}dy_1dy_2ds_1ds_2\\
\le & \int_{[0,t]^2}\int_{\R^{2d}} q_{t-s_1}(x-y_1)q_{t-s_2}(x-y_2) \left(\E[(v^{\varepsilon,\delta}_{s_1, y_1})^2]\right)^{1/2}\left(\E[(v^{\varepsilon,\delta}_{s_2, y_2})^2]\right)^{1/2} \langle \phi_{s_1,y_1}^{\varepsilon, \delta}, \phi_{s_2,y_2}^{\varepsilon, \delta} \rangle_{\H}dy_1dy_2ds_1ds_2.
\end{align*}

Now the problem is reduced to show that both $J_1^{\varepsilon,\delta}$ and $J_2^{\varepsilon,\delta}$ converge to zero as $(\varepsilon,\delta)\to 0.$ By the result in {\bf Step 1}, we have \[\lim_{\varepsilon,\delta\downarrow0}\E[(v_{s,y}^{\varepsilon,\delta})^2]=0,\] 
and similar arguments imply that 
\[\lim_{\varepsilon,\delta\downarrow0}\E[(A_{s,y}^{\varepsilon,\delta})^2]=0,\] 
for all $(s,y)\in[0,T]\times \R^d.$ Also note that both $\sup\limits_{\varepsilon,\delta>0}\sup\limits_{(s,y)\in[0,T]\times\R^d} \E[(v_{s,y}^{\varepsilon,\delta})^2]$ and  $\sup\limits_{\varepsilon,\delta>0}\sup\limits_{(s,y)\in[0,T]\times\R^d} \E[(A_{s,y}^{\varepsilon,\delta})^2]$ are finite. The fact that $\lim\limits_{\varepsilon,\delta\downarrow0}J_{1}^{\varepsilon, \delta}=0$  can be proven by by the dominated convergence theorem, noting that  Lemma \ref{lemma4.5} implies 
\[\sup_{\varepsilon,\delta>0}\sup_{(s_1, y_1)\in[0,T]\times\R^d}\sup_{(s_2, y_2)\in[0,T]\times\R^d}\E[(B^{\varepsilon, \delta}(s_1, y_1, s_2, y_2))^2]<\infty.\]

Now we show  $\lim\limits_{\varepsilon,\delta\downarrow 0}J_2^{\varepsilon, \delta}= 0$. By (\ref{inner'}) and (\ref{gdelta}), we have
\[\langle \phi_{s_1, y_1}^{\varepsilon, \delta}, \phi_{s_2, y_2}^{\varepsilon, \delta}\rangle_\mathcal H \le C |s_1-s_2|^{-\beta_0} \int_{\R^d} \exp\left(-i\xi\cdot(y_1-y_2)\right) \left(\widehat p_\varepsilon(\xi)\right)^2\mu(d\xi),\]
therefore,
\[J_2^{\varepsilon,\delta}\le C \int_0^t\int_0^t \int_{\R^{2d}}q_{t-s_1}(x-y_1)q_{t-s_2}(x-y_2)K^{\varepsilon,\delta}(s_1,y_1, s_2, y_2)|s_1-s_2|^{-\beta_0}dy_1dy_2ds_1ds_2\]
where
\begin{align*}
K^{\varepsilon,\delta}(s_1,y_1, s_2, y_2):=& \left(\E[(v^{\varepsilon,\delta}_{s_1, y_1})^2]\right)^{1/2}\left(\E[(v^{\varepsilon,\delta}_{s_2, y_2})^2]\right)^{1/2} \int_{\R^d}  \exp(-i\xi\cdot(y_1-y_2)) \left(\widehat p_\varepsilon(\xi)\right)^2\mu(d\xi) \\
\le&  C \int_{\R^d}  \exp(-i\xi\cdot(y_1-y_2)) \left(\widehat p_\varepsilon(\xi)\right)^2\mu(d\xi) .
\end{align*}
Denote
\[ L^{\varepsilon}_{s_1,s_2} :=\int_{\R^d}  \exp(-i\xi\cdot(y_1-y_2)) \left(\widehat p_\varepsilon(\xi)\right)^2\mu(d\xi).\]
 Hence 
 \begin{align*}
K^{\varepsilon,\delta}(s_1,y_1, s_2, y_2)\le C L^{\varepsilon}_{s_1,s_2} .
 \end{align*}
For the integral of  $L^{\varepsilon}_{s_1,s_2}$, we have
 \begin{align}
&\int_0^t\int_0^t \int_{\R^{2d}} q_{t-s_1}(x-y_1)q_{t-s_2}(x-y_2)L_{s_1,s_2}^{\varepsilon}|s_1-s_2|^{-\beta_0} dy_1dy_2ds_1ds_2\notag\\
&= \int_0^t\int_0^t \int_{\R^{2d}} \int_{\R^d} q_{t-s_1}(x-y_1)q_{t-s_2}(x-y_2)  \exp(-i\xi\cdot(y_1-y_2))\notag\\
&~~~~~~~~~~~~~~~~~~~~~~~~~~~~~\left(\widehat p_\varepsilon(\xi)\right)^2 |s_1-s_2|^{-\beta_0}\mu(d\xi)  dy_1dy_2ds_1ds_2\notag\\
&=  \int_0^t\int_0^t \int_{\R^{d}} \exp\left(-(t-s_1)\Psi(\xi)\right)\exp\left(-(t-s_2)\Psi(\xi)\right)\left(\widehat p_\varepsilon(\xi)\right)^2|s_1-s_2|^{-\beta_0}\mu(d\xi)ds_1ds_2 \notag\\
&\overset{\varepsilon\to 0}{\longrightarrow} \int_0^t\int_0^t \int_{\R^{d}} \exp\left(-(t-s_1)\Psi(\xi)\right)\exp\left(-(t-s_2)\Psi(\xi)\right)|s_1-s_2|^{-\beta_0}\mu(d\xi)ds_1ds_2 \notag\\
&=\int_0^t\int_0^t \int_{\R^{2d}} q_{t-s_1}(x-y_1)q_{t-s_2}(x-y_2) \gamma(y_1-y_2) |s_1-s_2|^{-\beta_0}dy_1dy_2ds_1ds_2w, \label{eq4.10}
 \end{align}
 where the convergence follows from the dominated convergence theorem, the last equality follows from the formula (\ref{exptransform'}), and  the last term is finite by Lemma \ref{lemma0}.  

We have shown that $K^{\varepsilon,\delta}(s_1,y_1, s_2, y_2)$ which converges to zero almost everywhere, is bounded by the sequence $L_{s_1,s_2}^\varepsilon$ which converges to $\gamma(y_1-y_2)$, and thanks to (\ref{eq4.10}), we can apply the generalized dominated convergence theorem to get that $\lim\limits_{\varepsilon,\delta\downarrow0}J_2^{\varepsilon,\delta}=0.$
\end{proof}
Using Theorem \ref{thmfk}, by direct computation we can get the following Feynman-Kac type  of representation for the moments of the solution to (\ref{spde}).
\begin{theorem}\label{thmfkmom}
Let $\mu$ satisfy Hypothesis (I), then the solution given by (\ref{mildu}) has finite moments of all orders. Furthermore, for any positive integer $p$,
 \begin{equation}\label{momstr}
 \E[u(t,x)^p]=\E\left[\prod_{j=1}^p u_0(X_t^j+x)\exp\left(\frac12\sum_{j,k=1}^p \int_0^t\int_0^t |r-s|^{-\beta_0}\gamma(X_r^j-X_s^k)drds\right)\right],
 \end{equation}
 where $X_1,\dots, X_p$ are $p$ independent copies of $X$.
\end{theorem}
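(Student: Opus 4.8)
The plan is to read off the moment directly from the Feynman--Kac representation \eqref{mildu}, exploiting the fact that, conditionally on the driving L\'evy processes, the exponent is Gaussian. First I would linearise the $p$-th power. Writing $V_{t,x}(X)=W\big(\delta_0(X_{t-\cdot}^x-\cdot)I_{[0,t]}(\cdot)\big)$ as in Theorem \ref{appfeyn} and abbreviating $h^j:=\delta_0\big(X_{t-\cdot}^{j}+x-\cdot\big)I_{[0,t]}(\cdot)$, where $X^1,\dots,X^p$ are independent copies of $X$ (started at $0$) all carried along with the \emph{same} noise $W$, the elementary identity $\big(\E^X f(X)\big)^p=\E^{X^1,\dots,X^p}\prod_{j=1}^p f(X^j)$ for i.i.d. copies gives
\[u(t,x)^p=\E^{X^1,\dots,X^p}\!\left[\prod_{j=1}^p u_0\big(X_t^{j}+x\big)\,\exp\Big(\sum_{j=1}^p W(h^j)\Big)\right].\]

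Next I would take $\E^W$ and use Fubini to bring it inside $\E^{X}$. Conditionally on $(X^1,\dots,X^p)$ the vector $(W(h^1),\dots,W(h^p))$ is centred Gaussian, being the $L^2$-limit of the Gaussian vectors $\big(W(\Phi_{t,x}^{\varepsilon,\delta}(X^1)),\dots,W(\Phi_{t,x}^{\varepsilon,\delta}(X^p))\big)$ coming from Theorem \ref{appfeyn}; hence $\sum_j W(h^j)=W\big(\sum_j h^j\big)$ is centred Gaussian and
\[\E^W\!\Big[\exp\Big(\sum_j W(h^j)\Big)\Big]=\exp\Big(\tfrac12\sum_{j,k=1}^p\langle h^j,h^k\rangle_{\H}\Big).\]
The covariance is computed from \eqref{inner}: integrating out the two Dirac masses pins the spatial arguments to $X_{t-s_1}^{j}+x$ and $X_{t-s_2}^{k}+x$, whose difference is $X_{t-s_1}^{j}-X_{t-s_2}^{k}$ (the base point $x$ cancels), and after the change of variables $r=t-s_1$, $s=t-s_2$ one obtains
\[\langle h^j,h^k\rangle_{\H}=\int_0^t\!\!\int_0^t|r-s|^{-\beta_0}\gamma\big(X^j_r-X^k_s\big)\,dr\,ds,\]
the diagonal terms $j=k$ being exactly \eqref{cc}. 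Substituting these identities yields \eqref{momstr}.

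The step I expect to be the main obstacle is justifying the Fubini interchange and, equivalently, the finiteness of all moments, i.e.
\[\E^{X^1,\dots,X^p}\!\left[\exp\Big(\tfrac12\sum_{j,k=1}^p\int_0^t\!\!\int_0^t|r-s|^{-\beta_0}\gamma(X^j_r-X^k_s)\,dr\,ds\Big)\right]<\infty,\]
to which the integrability reduces once $u_0$ is bounded. Here I would dominate the cross terms by the diagonal ones: each summand equals $\langle h^j,h^k\rangle_{\H}\ge 0$, so Cauchy--Schwarz together with $2ab\le a^2+b^2$ gives $\sum_{j,k}\langle h^j,h^k\rangle_{\H}\le p\sum_{j}\|h^j\|_{\H}^2$. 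Thus the exponential is bounded by $\prod_{j=1}^p\exp\big(\tfrac{p}{2}\int_0^t\int_0^t|r-s|^{-\beta_0}\gamma(X^j_r-X^j_s)\,dr\,ds\big)$, whose expectation factorises over the independent copies into
\[\Big(\E\Big[\exp\Big(\tfrac{p}{2}\int_0^t\!\!\int_0^t|r-s|^{-\beta_0}\gamma(X_r-X_s)\,dr\,ds\Big)\Big]\Big)^p,\]
which is finite by Theorem \ref{expint} with $\lambda=p/2$. This simultaneously establishes finiteness of the $p$-th moment and legitimises the exchange of $\E^W$ and $\E^X$.

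An alternative that avoids the direct Fubini argument is to compute $\E[(u^{\varepsilon,\delta}(t,x))^p]$ first---where the $\Phi_{t,x}^{\varepsilon,\delta}(X^j)\in\H$ are genuine functions, so the conditional Gaussian computation is immediate---and then pass to the limit, using the $L^p$ convergence $u^{\varepsilon,\delta}\to u$ established in Step~1 of the proof of Theorem \ref{thmfk} to identify the limit of the moments as $\E[u(t,x)^p]$, and noting that $\langle\Phi_{t,x}^{\varepsilon,\delta}(X^j),\Phi_{t,x}^{\varepsilon,\delta}(X^k)\rangle_{\H}$ converges to the double integral above by the same Fourier-analytic argument as in Theorem \ref{appfeyn}, with the domination provided by the bound just displayed.
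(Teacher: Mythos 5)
Your proposal is correct and is essentially the ``direct computation'' from Theorem \ref{thmfk} that the paper invokes without writing out: linearise the $p$-th power with independent copies of $X$, use the conditional Gaussianity from Theorem \ref{appfeyn} to evaluate $\E^W\exp\bigl(\sum_j W(h^j)\bigr)$, identify the conditional covariances with the double integrals as in \eqref{cc}, and justify Fubini/finiteness by dominating the cross energies via $\sum_{j,k}\langle h^j,h^k\rangle_{\H}\le p\sum_j\|h^j\|_{\H}^2$, independence, and Theorem \ref{expint}. Your fallback route (computing moments of $u^{\varepsilon,\delta}$ and passing to the limit) is also sound and mirrors what the paper does for the Skorohod analogue in Theorem \ref{thmfkmom'}.
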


In the proof of Theorem \ref{thmfk}, the following result is used in order to apply the dominated convergence theorem.

\begin{lemma}\label{lemma4.5} Let the measure $\mu$ satisfy  Hypothesis (I). Then, for any $n\in \mathbb N$,
 \[\sup_{\varepsilon,\delta>0}\sup_{\varepsilon',\delta'>0}\sup_{(s,y)\in[0,T]\times\R^d}\sup_{(r,z)\in[0,T]\times\R^d}\E\left[\left(\langle \phi_{s,y}^{\varepsilon, \delta},  \Phi_{r, z}^{\varepsilon',\delta'}(X)\rangle_\H\right)^n\right]<\infty,\]
 and 
 \[\sup_{\varepsilon,\delta>0}\sup_{(s,y)\in[0,T]\times\R^d}\sup_{(r,z)\in[0,T]\times\R^d}\E\left[\left(\langle \phi_{s,y}^{\varepsilon, \delta},  \delta(X_{r-\cdot}^z-\cdot)I_{[0,r]}(\cdot)\rangle_\H\right)^n\right]<\infty.\]
 \end{lemma}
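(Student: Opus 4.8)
The plan is to reduce both suprema to a single \emph{deterministic} time integral with integrable singularities, the decisive device being the maximum principle of Lemma \ref{expectation}, which lets me ``decouple'' the one--point kernels $\gamma(X_{w_j}^z-y)$ into a product of increment expectations. Both bounds can be treated simultaneously: the spatial mollifiers produce a kernel $K:=p_\varepsilon\ast p_{\varepsilon'}\ast\gamma$ (first bound) or $K:=p_\varepsilon\ast\gamma$ (second bound, where the Dirac mass collapses one spatial integration), and in either case $K$ is nonnegative and of positive type with $\widehat K=\widehat p_\varepsilon\widehat p_{\varepsilon'}\widehat\gamma\le\widehat\gamma$. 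Since $g_\delta,p_\varepsilon,\gamma\ge0$, the pairing is nonnegative and no absolute values are needed in raising it to the $n$-th power.

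First I would write out $\langle\phi_{s,y}^{\varepsilon,\delta},\Phi_{r,z}^{\varepsilon',\delta'}(X)\rangle_{\mathcal H}$ from the definition (\ref{inner}). The inner integral over the two time variables carrying $g_\delta,g_{\delta'}$ is bounded by (\ref{gdelta}), which replaces $\int\!\int g_\delta(s-a)g_{\delta'}(r-w-b)|a-b|^{-\beta_0}\,da\,db$ by $C|s-r+w|^{-\beta_0}$, while the spatial integral collapses to $(p_\varepsilon\ast p_{\varepsilon'}\ast\gamma)(X_w^z-y)=K(X_w^z-y)$. Raising to the $n$-th power and taking $\E^X$ then gives
\begin{equation*}
\E\big[\langle\phi_{s,y}^{\varepsilon,\delta},\Phi_{r,z}^{\varepsilon',\delta'}(X)\rangle_{\mathcal H}^{\,n}\big]\le C^n\int_{[0,r]^n}\prod_{j=1}^n|s-r+w_j|^{-\beta_0}\,\E\Big[\prod_{j=1}^n K(X_{w_j}^z-y)\Big]\,dw .
\end{equation*}

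Next comes the main idea. Splitting $[0,r]^n$ into the $n!$ simplices according to the order of the $w_j$'s and using the symmetry of the weight, on $0<w_1<\dots<w_n<r$ I would peel the expectation from the top: writing $X_{w_k}^z-y=(z-y)+\sum_{l\le k}(X_{w_l}-X_{w_{l-1}})$ and conditioning on the first $n-1$ increments, the maximum principle $\E[K(X_\tau+a)]\le\E[K(X_\tau)]$ (Lemma \ref{expectation}, valid for the positive--type kernel $K$ by the argument of Lemma \ref{lemma}) removes the shift and the top increment; iterating yields
\begin{equation*}
\E\Big[\prod_{k=1}^n K(X_{w_k}^z-y)\Big]\le\prod_{k=1}^n\E\big[K(X_{w_k-w_{k-1}})\big]\le\prod_{k=1}^n\E\big[\gamma(X_{w_k-w_{k-1}})\big],
\end{equation*}
the last step using $\widehat p\le1$. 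Crucially this bound no longer depends on $y,z$, which settles the spatial suprema at once. Finally Lemma \ref{expectation} together with Lemma \ref{lemma1} (with $\alpha=1-\beta_0$, i.e.\ exactly Hypothesis (I)) gives $\E[\gamma(X_\tau)]\le C(1+\tau^{-(1-\beta_0)})$, so the whole problem reduces to showing
\begin{equation*}
\sup_{s,r\in[0,T]}\int_{0<w_1<\dots<w_n<r}\prod_{j=1}^n|s-r+w_j|^{-\beta_0}\prod_{k=1}^n\big(1+(w_k-w_{k-1})^{-(1-\beta_0)}\big)\,dw<\infty .
\end{equation*}

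The hard part will be this last, purely deterministic estimate. Expanding $\prod_k(1+(w_k-w_{k-1})^{-(1-\beta_0)})=\sum_{A}\prod_{k\in A}(w_k-w_{k-1})^{-(1-\beta_0)}$, each term carries node singularities $|s-r+w_j|^{-\beta_0}$ of order $\beta_0\in(0,1)$ together with some edge singularities of order $1-\beta_0\in(0,1)$; a clustering count shows that whenever $m$ variables coalesce the total singular degree equals $-\beta_0-(m-1)>-m$, so integrability holds precisely because $\beta_0<1$. The genuinely delicate point is the uniformity in the shift $r-s$: as $s,r$ vary the node singularity at $w_j=r-s$ sweeps through $[0,r]$, and I would control it by the uniform--in--shift device already used in Lemma \ref{lemma0} and Remark \ref{rm3.3}, combined with the Beta--integral computation of Lemma \ref{multii}. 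It is the interplay of the two families of singularities under the supremum over the shift, rather than any single integration, that demands care.
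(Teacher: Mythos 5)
Your first two steps are sound and essentially mirror the paper's argument in physical space rather than Fourier space: the use of (\ref{gdelta}) on the $g_\delta$-convolutions, and the decoupling of $\E\big[\prod_k K(X_{w_k}^z-y)\big]$ into $\prod_k \E\big[K(X_{w_k}-X_{w_{k-1}})\big]$ by iterated conditioning and the maximum principle, run parallel to the paper's use of (\ref{inner'}) and Lemma \ref{lemma4.6} (and to the peeling in the proof of Theorem \ref{expint}). The fatal step is the last reduction: applying Lemma \ref{lemma1} pointwise, $\E[\gamma(X_u)]\le C\big(1+u^{-(1-\beta_0)}\big)$, \emph{before} performing the singular time integrals. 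The deterministic statement you reduce to is false: already for $n=1$ and $r=s$ your integral reads
\[
\int_0^r w^{-\beta_0}\big(1+w^{-(1-\beta_0)}\big)\,dw \ \ge\ \int_0^r w^{-1}\,dw \ =\ \infty ,
\]
and by Fatou the supremum over $r\neq s$ blows up as well when $|r-s|\to 0$. Your clustering count misses exactly this configuration: when the node point $a=r-s$ approaches the pinned endpoint $w_0=0$, a cluster of $m$ variables at the origin carries $m$ node factors \emph{and} $m$ edge factors (including the pinned edge $w_1^{-(1-\beta_0)}$), giving total degree $-m\beta_0-m(1-\beta_0)=-m$, the borderline case — and the $n=1$ computation shows it genuinely diverges. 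The appeal to Lemma \ref{lemma0} and Remark \ref{rm3.3} cannot rescue this, since those lemmas bound integrals of a singularity \emph{against an exponential}, which is precisely the structure your reduction has already thrown away.

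The paper's proof shows what must be done instead: never separate the time singularity from the frequency decay. Keeping $\E[\gamma(X_u)]=(2\pi)^{-d}\int_{\R^d} e^{-u\Psi(\xi)}\mu(d\xi)$ intact, each increment carries a factor of the form $(\cdot)^{-\beta_0}e^{-(\tau_j-\tau_{j-1})\Psi(\xi_j)}$, and one integrates time \emph{first}: $\int_0^t u^{-\beta_0}e^{-u\Psi(\xi)}\,du\le A_0\,(\Psi(\xi))^{-(1-\beta_0)}$ (Lemma \ref{lemma0}, Proposition \ref{hhnt}) converts the singularity into decay in $\xi$, which Hypothesis (I) absorbs; your bound $u^{-\beta_0}\cdot C\big(1+u^{-(1-\beta_0)}\big)$ is instead non-integrable. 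Moreover, the paper needs a case analysis you do not have: when $s\ge r$ it absorbs $|\tau_j+(s-r)|^{-\beta_0}\le(\tau_j-\tau_{j-1})^{-\beta_0}$ so each increment has a single singularity of order $\beta_0$; when $r>s$ it splits the simplex at the point $r-s$ (the sets $A_k$ and terms $M_1,M_2$), so that the node singularities sit at the opposite end of each sub-simplex from the pinned edge, where the borderline Beta integral $\int_0^b(b-\tau)^{-\beta_0}\tau^{\beta_0-1}\,d\tau=\Gamma(\beta_0)\Gamma(1-\beta_0)$ is scale-invariant and finite. If you keep the exponential and redo your estimate accordingly, your physical-space peeling produces exactly the paper's quantity $U_n(r,s)$; in other words, the repair collapses back to the paper's proof.
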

\begin{proof}
First of all, $\langle \phi_{s,y}^{\varepsilon, \delta},  \Phi_{r, z}^{\varepsilon',\delta'}(X)\rangle_\H$ is a nonnegative real number by (\ref{inner}), and by (\ref{inner'})
\begin{align*}
 \langle \phi_{s,y}^{\varepsilon, \delta},  \Phi_{r, z}^{\varepsilon',\delta'}(X)\rangle_\H=&\int_0^r\int_0^s\int_0^r \int_{\R^d} \widehat p_{\varepsilon}(\xi)\widehat p_{\varepsilon'}(\xi)\exp(-i\xi\cdot(X_\tau^z-y)) g_{\delta'}(r-\mu-\tau)\\
 &\qquad \qquad \qquad g_{\delta}(s-\nu) |\mu-\nu|^{-\beta_0} \mu(\xi) d\tau d\mu d\nu.
\end{align*}
Therefore, denoting $D=[0,r]\times[0,s]\times[0,r]$, as in the first step of the proof for Theorem \ref{thmfk}, we have
 \begin{align*}
  &\E\left[\left(\langle \phi_{s,y}^{\varepsilon, \delta},  \Phi_{r, z}^{\varepsilon',\delta'}(X)\rangle_\H\right)^n\right]\\
  =& \int_{D^n}\int_{\R^{nd}}\prod_{j=1}^n  g_{\delta'}(r-\mu_j-\tau_j)g_\delta(s-\nu_j) |\mu_j-\nu_j|^{-\beta_0}\\
  &\qquad \qquad\prod_{j=1}^n \widehat p_{\varepsilon}(\xi_j)\widehat p_{\varepsilon'}(\xi_j)\E\left[\exp\left(-i\sum_{j=1}^n \xi_j\cdot (X_{\tau_j}^z-y)\right)\right]\mu(d\xi)d\tau d\mu d\nu\\
  \le & C^n \int_{[0,r]^n}\int_{\R^{nd}}\prod_{j=1}^n |r-s-\tau_j|^{-\beta_0} \exp\left(-\frac{\varepsilon+\varepsilon'}2\sum_{j=1}^n|\xi_j|^2\right)\left|\E\left[\exp\left(-i\sum_{j=1}^n \xi_j\cdot (X_{\tau_j}^z-y)\right)\right]\right|\mu(d\xi)d\tau \\
  \le & C^n \int_{[0,r]^n}\int_{\R^{nd}}\prod_{j=1}^n |r-s-\tau_j|^{-\beta_0} \E\left[\exp\left(-i\sum_{j=1}^n \xi_j\cdot X_{\tau_j}\right)\right]\mu(d\xi)d\tau.
 \end{align*}
 Thus, we have, denoting $\eta_j=\xi_j+\xi_{j+1}+\dots+\xi_n$,
 \begin{align*}
  &\E\left[\left(\langle \phi_{s,y}^{\varepsilon, \delta},  \Phi_{r, z}^{\varepsilon',\delta'}(X)\rangle_\H\right)^n\right]\\
  \le & C^n n! \int_{[0<\tau_1<\cdots<\tau_n<r]} \int_{\R^{nd}}\prod_{j=1}^n |r-s-\tau_j|^{-\beta_0} \E\left[\exp\left(-i\sum_{j=1}^n \xi_j\cdot X_{\tau_j}\right)\right]\mu(d\xi)d\tau\\
  = & C^n n! \int_{[0<\tau_1<\cdots<\tau_n<r]} \int_{\R^{nd}}\prod_{j=1}^n |r-s-\tau_j|^{-\beta_0} \E\left[\exp\left(-i\sum_{j=1}^n \eta_j\cdot (X_{\tau_j}-X_{\tau_{j-1}})\right)\right]\mu(d\xi)d\tau~(\text{let } \tau_0=0)\\
  = & C^n n! \int_{[0<\tau_1<\cdots<\tau_n<r]} \int_{\R^{nd}} \prod_{j=1}^n |r-s-\tau_j|^{-\beta_0} \exp\left(-\sum_{j=1}^n(\tau_j-\tau_{j-1})\Psi(\eta_j)\right)\mu(d\xi)d\tau\\
  \le &C^n n! \int_{[0<\tau_1<\cdots<\tau_n<r]} \int_{\R^{nd}}\prod_{j=1}^n |\tau_j+(s-r)|^{-\beta_0} \exp\left(-\sum_{j=1}^n(\tau_j-\tau_{j-1})\Psi(\xi_j)\right)\mu(d\xi)d\tau\text{ (by Lemma \ref{lemma4.6})}\\ 
  =:&C^n n! U_n(r,s) 
       \end{align*}
When $s-r\ge 0$, for all $0<r\le s <T,$
\begin{align*}
U_n(r,s) \le&  \int_{[0<\tau_1<\cdots<\tau_n<r]} \int_{\R^{nd}}\prod_{j=1}^n |\tau_j|^{-\beta_0} \exp\left(-\sum_{j=1}^n(\tau_j-\tau_{j-1})\Psi(\xi_j)\right)\mu(d\xi)d\tau\\
\le &\int_{[0<\tau_1<\cdots<\tau_n<T]} \int_{\R^{nd}}\prod_{j=1}^n |\tau_j-\tau_{j-1}|^{-\beta_0} \exp\left(-\sum_{j=1}^n(\tau_j-\tau_{j-1})\Psi(\xi_j)\right)\mu(d\xi)d\tau.
\end{align*}
By Proposition \ref{hhnt}, $U_n(r,s)$ is uniformly bounded by a finite number depending on $(T, n, \beta_0)$ and the measure $\mu$ only.

When $r-s>0$, the set $[0<\tau_1<\dots<\tau_n<r]$ is the union of $A_k's$ for  $k=0,1,2,\dots, n$ where $A_k=[0=\tau_0<\tau_1<\dots<\tau_k<r-s<\tau_{k+1}<\dots< \tau_n<r]$. On each $A_k$, we have 
\begin{align*}
&\int_{A_k} \int_{\R^{nd}}\prod_{j=1}^n |r-s-\tau_j|^{-\beta_0} \exp\left(-\sum_{j=1}^n(\tau_j-\tau_{j-1})\Psi(\xi_j)\right)\mu(d\xi)d\tau\\
=&\int_{A_k} \int_{\R^{nd}}\prod_{j=1}^k (r-s-\tau_j)^{-\beta_0} \exp\left(-\sum_{j=1}^k(\tau_j-\tau_{j-1})\Psi(\xi_j)\right)\\
&\qquad \qquad (\tau_{k+1}-(r-s))^{-\beta_0} \exp\left(-(\tau_{k+1}-(r-s)+(r-s)-\tau_{k})\Psi(\xi_j)\right)\\
&\qquad \qquad  \prod_{j=k+2}^n (\tau_j-(r-s))^{-\beta_0} \exp\left(-\sum_{j=k+2}^n(\tau_j-\tau_{j-1})\Psi(\xi_j)\right)\mu(d\xi)d\tau\\
\le&\int_{A_k} \int_{\R^{nd}}\prod_{j=1}^k (r-s-\tau_j)^{-\beta_0} \exp\left(-\sum_{j=1}^k(\tau_j-\tau_{j-1})\Psi(\xi_j)\right)\\
&\qquad \qquad (\tau_{k+1}-(r-s))^{-\beta_0} \exp\left(-(\tau_{k+1}-(r-s))\Psi(\xi_{k+1})\right)\\
&\qquad \qquad  \prod_{j=k+2}^n (\tau_j-(r-s))^{-\beta_0} \exp\left(-\sum_{j=k+2}^n(\tau_j-\tau_{j-1})\Psi(\xi_j)\right)\mu(d\xi)d\tau\\
=&\int_{[0<\tau_1<\dots<\tau_k<r-s]}\int_{\R^{kd}} \prod_{j=1}^k (r-s-\tau_j)^{-\beta_0} \exp\left(-\sum_{j=1}^k(\tau_j-\tau_{j-1})\Psi(\xi_j)\right)\mu(d\xi) d\tau\\
&\times \int_{[r-s<\tau_{k+1}<\dots<\tau_n<r]}\int_{\R^{(n-k)d}} \prod_{j=k+1}^n (\tau_j-(r-s))^{-\beta_0} \exp\left(-(\tau_{k+1}-(r-s))\Psi(\xi_{k+1})\right)\\
&\qquad \qquad~~~~~~~~~~~~~~~~ \exp\left(-\sum_{j=k+2}^n(\tau_j-\tau_{j-1})\Psi(\xi_j)\right)\mu(d\xi)d\tau\\
=:& M_1(s,r)\times M_2(s,r).
\end{align*}
By Lemma \ref{lemma1}, we have 
\begin{align*}
\sup_{0<s<r<T}M_1(s,r)&\le C^k \sup_{0<s<r<T}\int_{[0<\tau_1<\dots<\tau_k<r-s]} \prod_{j=1}^k (r-s-\tau_j)^{-\beta_0} (1+(\tau_j-\tau_{j-1})^{-1+\beta_0}) d\tau\\
&<\infty.
\end{align*}
For $M_2(s,r)$, let $\theta_{j}=\tau_{j}-(r-s), j=k+1,\dots, n$, and assume $\theta_k=0$, then   for all $0<s<r<T$, 
   \begin{align*}
   M_2(s,r)&=\int_{[0<\theta_{k+1}<\dots<\theta_n<s]}\int_{\R^{(n-k)d}}\prod_{j=k+1}^n  \theta_j^{-\beta_0} \exp\left(-\sum_{j=k+1}^n(\theta_j-\theta_{j-1})\Psi(\xi_j)\right)\mu(d\xi)d\theta\\
   &\le  \int_{[0<\theta_{k+1}<\dots<\theta_n<T]}\int_{\R^{(n-k)d}}\prod_{j=k+1}^n ( \theta_j-
   \theta_{j-1})^{-\beta_0} \exp\left(-\sum_{j=k+1}^n(\theta_j-\theta_{j-1})\Psi(\xi_j)\right)\mu(d\xi)d\theta,
   \end{align*}
  and the last integral is bounded by a finite number depending on $(n-k, T, \beta_0)$ and $\mu$ by Proposition \ref{hhnt}. 
   
Thus we have shown that when $r-s>0, \sup_{0<s<r<T} U_n(r,s)<\infty$, and the first inequality is obtained. Finally, since  $\langle \phi_{s,y}^{\varepsilon, \delta},  \Phi_{r, z}^{\varepsilon',\delta'}(X)\rangle_\H$ converges to $\langle \phi_{s,y}^{\varepsilon, \delta},  \delta(X_{r-\cdot}^z-\cdot)I_{[0,r]}(\cdot)\rangle_\H$ in probability as $(\varepsilon',\delta')\to 0$, the second the inequality follows from the first one and Fatou's lemma. \hfill
\end{proof}

In the proofs of the previous lemma, Theorem \ref{thmfkskr} and Theorem \ref{thmholder'}, the following maximum principle is needed.

\begin{lemma}\label{lemma4.6}
For any $t>0$ and  $a\in \R^d,$
\[\int_{\R^d} \exp(-t \Psi(\xi+a))\mu(d\xi)\le \int_{\R^d}\exp(-t\Psi(\xi))\mu(d\xi).
   \]
   \end{lemma}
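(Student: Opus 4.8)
The plan is to recognize this as nothing more than the frequency-space half of the maximum principle already established in Lemma \ref{lemma}. First I would rewrite the statement in Fourier variables: recalling that $\widehat q_t(\xi)=\E\exp(i\xi X_t)=\exp(-t\Psi(\xi))$ and that the spectral measure is $\mu(d\xi)=\widehat\gamma(\xi)\,d\xi$, the claimed inequality reads
\[\int_{\R^d}\widehat q_t(\xi+a)\,\widehat\gamma(\xi)\,d\xi\le\int_{\R^d}\widehat q_t(\xi)\,\widehat\gamma(\xi)\,d\xi,\]
which is precisely the second inequality in \eqref{equ2.6'} applied with $g=q_t$, $f=\gamma$, and shift $\eta=a$.

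Next I would check that $g=q_t$ and $f=\gamma$ satisfy the hypotheses of Lemma \ref{lemma}. The transition density $q_t$ is nonnegative and integrable (being a probability density), so in particular $q_t\in L^1(\R^d)$; moreover its Fourier transform $\exp(-t\Psi(\xi))$ is nonnegative since the symmetry of $X$ forces $\Psi$ to be real-valued and nonnegative, so $q_t$ is a kernel of positive type. The kernel $\gamma$ is a kernel of positive type directly by the standing assumptions (1) and (2). For the integrability condition required by Lemma \ref{lemma}, observe that if $\int_{\R^d}\exp(-t\Psi(\xi))\mu(d\xi)=\infty$ the asserted bound holds trivially, so we may assume this integral is finite; but this is exactly the condition $\int_{\R^d}\widehat g(\xi)\widehat f(\xi)\,d\xi<\infty$ needed to invoke the lemma. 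Feeding these into \eqref{equ2.6'} yields the conclusion at once.

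I do not expect any genuine obstacle here, since the entire content is packaged in Lemma \ref{lemma}; this result is merely the spectral-side companion of the physical-side estimate $\E[\gamma(X_t+a)]\le\E[\gamma(X_t)]$ recorded in Lemma \ref{expectation}. The only two points meriting a word of care are confirming that $\exp(-t\Psi)$ is a legitimate nonnegative Fourier transform (so that $q_t$ qualifies as a positive-type kernel in $L^1$), and disposing of the degenerate case in which the right-hand side is infinite, where the inequality is automatic.
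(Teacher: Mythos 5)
Your proposal is correct and takes essentially the same route as the paper, whose entire proof is the one line ``The result follows directly from Lemma \ref{lemma}'': you simply instantiate that lemma with $g=q_t$, $f=\gamma$, $\eta=a$ and use the second inequality of \eqref{equ2.6'}, which is exactly the intended argument. The details you supply (that $q_t\in L^1(\R^d)$ is a kernel of positive type because $\exp(-t\Psi)\ge 0$, and that the case $\int_{\R^d}\exp(-t\Psi(\xi))\mu(d\xi)=\infty$ is trivial) are precisely the hypothesis checks the paper leaves implicit.
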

   
   \begin{proof} The result follows directly from Lemma \ref{lemma}.
\end{proof}

\subsection{H\"older continuity}
\begin{hypothesis}[S1]
The spectral measure $\mu$ satisfies that for all $z\in \R^d$, there exist $\alpha_1\in(0,1]$ and $C>0$ such that
\[\int_0^T\int_0^T \int_{\R^d}|r-s|^{-\beta_0}e^{-|r-s|\Psi(\xi)}\left(1-e^{-i\xi\cdot z}\right)\mu(d\xi)drds\le C|z|^{2\alpha_1}.\]
\end{hypothesis}
\begin{hypothesis}[T1]  The spectral measure $\mu$ satisfies that for all $a$ in a bounded subset of $\R$, there exist $\alpha_2\in(0,1]$ and $C>0$ such that 
\[\int_0^T\int_0^T \int_{\R^d}|r-s|^{-\beta_0}\bigg|\exp\Big(-|r-s|\Psi(\xi)\Big)-\exp\Big(-|r-s+a|\Psi(\xi)\Big)\bigg|\mu(d\xi)drds\le C |a|^{\alpha_2}.\]
\end{hypothesis}
\begin{remark}
A sufficient condition for  Hypothesis (S1) to hold is the following
\begin{equation}\label{s1'}
\int_{\R^d}\frac{|\xi|^{2\alpha_1}}{1+(\Psi(\xi))^{1-\beta_0}}\mu(d\xi)<\infty
\end{equation}
due to Lemma \ref{lemma0} and the fact that $1-\cos x\le |x|^{2\alpha_1}$. Note that $\alpha_1<1-\beta_0$ is a necessary condition for (\ref{s1'}) to hold. This is because $\mu(A)<\infty$ for any bounded set $A\subset \R^d$, $\mu(\R^d)=\gamma(0)=\infty$, $\lim_{\xi\to\infty} \Psi(\xi)=\infty$ and $\limsup_{\|\xi\|\to\infty}\frac{\Psi(\xi)}{\|\xi\|^2}<\infty.$

Similarly, a sufficient condition for  Hypothesis (T1) to be true is that 
\begin{equation}\label{t1'}
\int_{\R^d}\frac{(\Psi(\xi))^{\alpha_2}}{1+(\Psi(\xi))^{1-\beta_0}}\mu(d\xi)<\infty
\end{equation}
because of Remark \ref{rm3.3} and the fact that  $|e^{-x}-e^{-y}|\le (e^{-x}+e^{-y})|x-y|^{\alpha}$ for $x,y\ge 0$ and $\alpha\in (0,1]$. Indeed for $a>0, e^a-1\le (e^a+1)(a\wedge 1)$, and hence  $e^a-1\le (e^a+1)a^\alpha$ for $\alpha\in(0,1]$.  One necessary condition for (\ref{t1'}) to hold is $\alpha_2<1-\beta_0$.
\end{remark}
\begin{theorem} \label{thmholder} Let $u_0(x)\equiv1$.  If  the measure $\mu$ satisfies Hypothesis (S1), then the solution $u(t,x)$ given by the Feynman-Kac formula (\ref{mildu}) has a version that is $\theta_1$-H\"older continuous in $x$ on any compact set of $[0,\infty)\times \R^d$, with $\theta_1<\alpha_1$;  Similarly, if $\mu$ satisfies Hypothesis (T1), the solution $u(t,x)$ has a version that is $\theta_2$-H\"older continuous in $t$ on any compact set of $[0,\infty)\times \R^d$, with $\theta_2<\alpha_2/2$.
\end{theorem}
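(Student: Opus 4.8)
The plan is to obtain both statements from Kolmogorov's continuity criterion, so the task reduces to bounding the $p$-th moments of the increments of $u$. For the spatial statement I would show that for every integer $p\ge 1$ there is a constant $C=C(p,T,\mu)$ such that
$$\E\big[|u(t,x)-u(t,x')|^p\big]\le C\,|x-x'|^{p\alpha_1}$$
uniformly for $(t,x),(t,x')$ in a fixed compact set; since $p$ is arbitrary, Kolmogorov's theorem then yields a version that is $\theta_1$-H\"older in $x$ for every $\theta_1<\alpha_1$, the loss $d/p$ being sent to $0$ by letting $p\to\infty$. The temporal statement is the analogous bound with $|t-t'|^{p\alpha_2/2}$ on the right, the extra factor $1/2$ coming from passing from a variance bound to a standard-deviation bound.

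Since $u_0\equiv 1$, the functional (\ref{mildu}) reads $u(t,x)=\E^X[\exp(V_{t,x}(X))]$ with $V_{t,x}(X)=W(\delta_0(X^x_{t-\cdot}-\cdot)I_{[0,t]}(\cdot))$, which by Theorem \ref{appfeyn} is centered Gaussian conditional on $X$. The first step is the elementary inequality $|e^{a}-e^{b}|\le|a-b|(e^{a}+e^{b})$, which gives
$$|u(t,x)-u(t,x')|\le \E^X\big[|V_{t,x}-V_{t,x'}|\,(e^{V_{t,x}}+e^{V_{t,x'}})\big].$$
Raising to the power $p$ and writing the $p$-fold product as an expectation over independent copies $X^1,\dots,X^p$, I would take $\E^W$ and apply the conditional (given the copies) Cauchy--Schwarz inequality to separate the increment factor $\prod_j|\Delta_j|$, where $\Delta_j:=V_{t,x}(X^j)-V_{t,x'}(X^j)$, from the exponential factor $\prod_j(e^{V_{t,x}(X^j)}+e^{V_{t,x'}(X^j)})$. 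After a second Cauchy--Schwarz in $\E^X$, the exponential factor is controlled uniformly exactly as in Step 1 of the proof of Theorem \ref{thmfk}, hence is finite by Theorem \ref{expint}.

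The core is the increment factor $\E\big[\prod_{j=1}^p\Delta_j^2\big]$. Conditionally on the copies the $\Delta_j=W(\psi_j)$ are jointly Gaussian with $\psi_j=\delta_0(X^{j,x}_{t-\cdot}-\cdot)-\delta_0(X^{j,x'}_{t-\cdot}-\cdot)$; by the Gaussian (Wick) moment formula together with Cauchy--Schwarz bounding each pair covariance by the geometric mean of the two conditional variances, one gets the clean estimate $\E^W\big[\prod_j\Delta_j^2\mid X\big]\le (2p-1)!!\prod_j\tau_j^2(X^j)$, where $\tau_j^2(X^j)=\|\psi_j\|_{\mathcal H}^2$ depends only on the single copy $X^j$. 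Because the copies are independent, $\E^X\big[\prod_j\tau_j^2\big]=(\E^X\tau^2)^p$, and a direct computation through (\ref{inner'}) shows that $\E^X\tau^2$ equals the integral in Hypothesis (S1) with $z=x-x'$: the two Dirac differences produce the factor $|e^{-i\xi\cdot x}-e^{-i\xi\cdot x'}|^2=2\big(1-\cos(\xi\cdot(x-x'))\big)$, while $\E^X\exp(-i\xi\cdot(X_{t-r}-X_{t-s}))=e^{-|r-s|\Psi(\xi)}$. Thus $\E^X\tau^2\le C|x-x'|^{2\alpha_1}$, the increment factor is $\le C\,|x-x'|^{2p\alpha_1}$, and its square root gives the required $|x-x'|^{p\alpha_1}$.

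For the time increment the same scheme applies with $\psi_j$ replaced by $\delta_0(X^{j,x}_{t-\cdot}-\cdot)I_{[0,t]}-\delta_0(X^{j,x}_{t'-\cdot}-\cdot)I_{[0,t']}$. Expanding the resulting conditional variance and taking $\E^X$, the diagonal terms produce $\int\!\int|r-s|^{-\beta_0}e^{-|r-s|\Psi(\xi)}$, whereas the cross term carries the shifted exponent $e^{-|r-s+a|\Psi(\xi)}$ with $a=t-t'$; their difference is precisely the quantity controlled by Hypothesis (T1), giving a variance bound $\le C|t-t'|^{\alpha_2}$ and hence the exponent $\alpha_2/2$. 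I expect the main obstacle to lie exactly here, in the bookkeeping: one must show that the leftover boundary strips (the regions where the integration domains $[0,t]$ and $[0,t']$ disagree) contribute at order $|t-t'|^{\alpha_2}$ as well, and, throughout both cases, that the Cauchy--Schwarz splitting indeed makes the small increment factor and the uniformly finite exponential factor (via Theorem \ref{expint}) compatible; reconciling these two competing requirements is the delicate point of the estimate.
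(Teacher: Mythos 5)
Your overall architecture matches the paper's: both arguments reduce to Kolmogorov's criterion via the inequality $|e^a-e^b|\le (e^a+e^b)|a-b|$, control the exponential factor through Theorem \ref{expint}, and reduce the increment estimate to the quantity $\E^X\E^W\left[|V_{t,x}-V_{s,y}|^2\right]$, which is exactly what Hypotheses (S1)/(T1) bound. Where you genuinely diverge is in passing from the $p$-th moment of the increment to the $p$-th power of this conditional second moment. The paper interchanges $\E^W$ and $\E^X$ by Minkowski's inequality and then uses the equivalence of $L^{2p}$ and $L^2$ norms for conditionally Gaussian variables,
\begin{equation*}
\left(\E^W\left[\left(\E^X|V_{t,x}-V_{s,y}|^2\right)^{p}\right]\right)^{1/2}\le C_p\left(\E^X\E^W\left[|V_{t,x}-V_{s,y}|^{2}\right]\right)^{p/2},
\end{equation*}
whereas you introduce $p$ independent copies $X^1,\dots,X^p$ and bound $\E^W\left[\prod_j\Delta_j^2\mid X\right]\le(2p-1)!!\prod_j\|\psi_j\|_{\mathcal H}^2$ by the Isserlis/Wick pairing formula together with Cauchy--Schwarz on each pair covariance, then factor by independence of the copies. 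Both routes are valid: yours avoids Gaussian hypercontractivity at the price of the pairing combinatorics (and it correctly accounts for the fact that the $\Delta_j$ are conditionally correlated across $j$). Your spatial estimate, including the identification of $\E^X\tau^2$ with the integral in Hypothesis (S1) via the factor $2\bigl(1-\cos(\xi\cdot(x-x'))\bigr)$, is precisely the paper's computation, so the spatial half of your proof is complete in substance.

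The temporal half is where you stop short: you flag the boundary strips (where $[0,t]$ and $[0,t']$ disagree) as an unresolved obstacle rather than estimating them, and this is a genuine gap as written. The paper avoids your bookkeeping entirely by decomposing \emph{before} squaring: for $s<t$, it writes $V_{t,x}-V_{s,x}$ as the stochastic integral over $[0,s]$ of the difference of the two Dirac terms plus the stochastic integral over $[s,t]$ of a single Dirac term, so that $\E[(V_{t,x}-V_{s,x})^2]\le 2(A+B)$ with $A$ supported on $[0,s]^2$ and $B$ on $[s,t]^2$. The term $A$ is the one controlled by Hypothesis (T1), giving $C|t-s|^{\alpha_2}$, while the strip term is
\begin{equation*}
B=\int_{\R^d}\int_0^{t-s}\int_0^{t-s}|s_1-s_2|^{-\beta_0}e^{-|s_1-s_2|\Psi(\xi)}\,ds_1ds_2\,\mu(d\xi)\le C(t-s)\int_{\R^d}\frac{\mu(d\xi)}{1+(\Psi(\xi))^{1-\beta_0}}
\end{equation*}
by Lemma \ref{lemma0}; the last integral is finite by Hypothesis (I), which is in force because the solution (\ref{mildu}) and Theorem \ref{expint} already require it. Since $\alpha_2\le1$, the bound $C(t-s)$ is dominated by $C(t-s)^{\alpha_2}$ on compacts, so the strips need no $\alpha_2$-order estimate from (T1) at all — they are of order $t-s$. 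To finish your argument, either adopt this pre-squaring decomposition or prove this strip bound directly; without one of these, the temporal statement is not yet established.
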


\begin{remark} \label{RemarkHolder}The above theorem coincides with  Theorem 4.9 in \cite{HHNT} when $\L=\frac12\Delta$ and $\dot W$ is a general Gaussian noise. Now we consider the case when $\L=-(-\Delta)^{\alpha/2}$ with  $\alpha\in (0,2]$, i.e., $\psi(\xi)=|\xi|^\alpha$,  and  $\gamma(x)=|x|^{-\beta}, \beta\in(0,d)$ or $\gamma(x)=\prod_{j=1}^d |x_j|^{-\beta_j}, \beta_j\in(0,1), j=1,\dots, d$. Note that the Fourier transform $\widehat \gamma(\xi)$ is $|\xi|^{\beta-d}$ or $\prod_{j=1}^d |\xi_j|^{\beta_j-1}$ up to a multiplicative constant. We also denote $\beta=\sum_{j=1}^d \beta_j$.    Then condition \eqref{s1'} is equivalent to $\alpha_1<\frac12\left[\alpha(1-\beta_0)-\beta\right]$ and condition \eqref{t1'} is equivalent to $\alpha_2<(1-\beta_0)-\frac\beta\alpha.$ Assuming conditions \eqref{s1'} and \eqref{t1'}, Theorem \ref{thmholder} coincides with the results obtained in \cite{MR2778803} and \cite{CHS}, i.e., on any compact set of  $[0,\infty)\times \R^d$, the solution $u(t,x)$ has a version that is $\theta_1$-H\"older continuous in $x$ with $\theta_1\in (0, \frac12\left[\alpha(1-\beta_0)-\beta\right]) $ and $\theta_2$-H\"older continuous in $t$ with $\theta_2\in(0, \frac12\left[(1-\beta_0)-\frac\beta\alpha\right].$
\end{remark}
\begin{proof}
Recall that 
$V_{t,x}= \int_0^t \int_{\mathbb R^d} \delta(X_{t-s}^x-y)W(ds,dy).$
Noting that $|e^a-e^b|\le (e^a+e^b)|a-b|$, we have for any $p>0$
\begin{align*}
&\E^W[\left|\E^X\left[\exp(V_{t,x})-\exp(V_{s,y})\right]\right|^p]\le C\E^W\left[\left(\E^X\left[\exp(2V_{t,x})+\exp(2V_{s,y})\right]\right)^{p/2}\left(\E^X\left[|V_{t,x}-V_{s,y}|^2\right]\right)^{p/2}\right] \\
&\le C \E\left[\exp(pV_{t,x})+\exp(pV_{s,y})\right] \left(\E^W\left[\left(\E^X\left[|V_{t,x}-V_{s,y}|^2\right]\right)^{p}\right]\right)^{1/2}.
\end{align*}
By Theorem \ref{expint}, $\E\left[\exp(pV_{t,x})+\exp(pV_{t,y})\right]<\infty$. On the other hand,
$$\left(\E^W\left[\left(\E^X\left[|V_{t,x}-V_{s,y}|^2\right]\right)^{p}\right]\right)^{1/2}\le \left(\E^X \left(\E^W[|V_{t,x}-V_{s,y}|^{2p}]\right)^{1/p}\right)^{p/2}\le C_p \left(\E^X \E^W[|V_{t,x}-V_{s,y}|^{2}]\right)^{p/2}, $$
where the first inequality follows from Minkowski's inequality and the second one holds because of the equivalence between the  $L^p$-norm and $L^2$-norm of Gaussian random variables.  For the spatial estimate, by Hypothesis (S1), 
\begin{align*}
&\E^X \E^W[|V_{t,x}-V_{t,y}|^{2}]=2\int_0^t \int_0^t |r-s|^{-\beta_0}\E^X\left[\gamma(X_r-X_s)-\gamma(X_r-X_s+x-y))\right]drds\\
&=2\int_0^t\int_0^t \int_{\R^d} |r-s|^{-\beta_0}e^{-|r-s|\Psi(\xi)}\left(1-e^{-i\xi\cdot (x-y)}\right)\mu(d\xi) drds\le C|x-y|^{2\alpha_1}.
\end{align*}
Therefore \[\E^W[\left|\E^X\left[\exp(V_{t,x})-\exp(V_{t,y})\right]\right|^p]\le C_p |x-y|^{\alpha_1 p},\]
and the H\"older continuity of $u(t,x)$ in space follows from Komogorov's continuity criterion.

Now assume that $0\le s<t\le T,$ then 
\begin{align*}
&\E[(V_{t,x}-V_{s,x})^2] \\
=&\E\left[\left(\int_0^s\int_{\R^d} \left(\delta_0(X_{t-r}^x-z)-\delta_0(X_{s-r}^x-z)\right)W(dr, dz)+\int_s^t\int_{\R^d}\delta_0(X_{t-r}^x-z)W(dr,dx)\right)^2\right] \\
\le & 2(A+B),
\end{align*}
where \[A=\E\left[\left(\int_0^s\int_{\R^d} \left(\delta_0(X_{t-r}^x-z)-\delta_0(X_{s-r}^x-z)\right)W(dr, dz)\right)^2 \right],\]
and \[B=\E\left[\left(\int_s^t\int_{\R^d}\delta_0(X_{t-r}^x-z)W(dr,dx)\right)^2\right]. \]
For the first term $A$, by Hypothesis (T1), we have 
\begin{align*}
A&=\E\left[\int_0^s\int_0^s |s_1-s_2|^{-\beta_0}\big[\gamma(X_{t-s_1}-X_{t-s_2})+\gamma(X_{s-s_1}-X_{s-s_2})-2\gamma(X_{t-s_1}-X_{s-s_2})\big]ds_1ds_2\right]\\
&\le 2 \int_0^s\int_0^s \int_{\R6d}|s_1-s_2|^{-\beta_0}\bigg|\exp\Big(-|s_1-s_2|\Psi(\xi)\Big)-\exp\Big(-|t-s-s_1+s_2|\Psi(\xi)\Big)\bigg|\mu(d\xi)ds_1ds_2\\
&\le C|t-s|^{\alpha_2}.
\end{align*}
For the term $B$, we have
\begin{align*}
 &B=\int_s^t\int_s^t |s_1-s_2|^{-\beta_0}\E\gamma(X_{s_1}-X_{s_1})ds_1ds_2 =\int_0^{t-s}\int_0^{t-s} |s_1-s_2|^{-\beta_0}\E\gamma(X_{s_1}-X_{s_1})ds_1ds_2\\
 &=\int_{\R^d}\int_0^{t-s}\int_0^{t-s} |s_1-s_2|^{-\beta_0}\exp\left(-|s_1-s_2|\Psi(\xi)\right)ds_1ds_2\mu(d\xi).
\end{align*}
By Lemma \ref{lemma0}, we have that for $(t-s)$ in a bounded domain, there exists a constant $C$ such that 
\[\int_0^{t-s}\int_0^{t-s} |s_1-s_2|^{-\beta_0}\exp\left(-|s_1-s_2|\Psi(\xi)\right)ds_1ds_2\le C(t-s) \frac{1}{1+(\Psi(\xi))^{1-\beta_0}}.\]
Hence $B\le C (t-s)$, and 
\[\E^W[\left|\E^X\left[\exp(V_{t,x})-\exp(V_{s,x})\right]\right|^p]\le C\left(\E[(V_{t,x}-V_{s,x})^2]\right)^{p/2}\le C(A+B)^{p/2}\le C (t-s)^{p\alpha_2/2}.\]
The H\"older continuity in time is obtained by Kolmogorov's criterion. \hfill
\end{proof}

\section{Skorohod equation}\label{sectionsk}
In this section, we consider (\ref{spde}) in the Skorohod sense, i.e., we consider the following SPDE,
\begin{equation}\label{spde'}
\begin{cases}
\displaystyle\frac{\partial u}{\partial t}=\L u+u\diamond\dot {W},& t\ge 0, x\in \mathbb R^d\\
u(0,x)=u_0(x),& x\in\mathbb R^d,
\end{cases}
\end{equation}
where the symbol $\diamond$ means the Wick product.

\subsection{Existence and uniqueness of the mild solution}
In this subsection, we will obtain the existence and uniqueness of the mild solution to (\ref{spde'}) under the following assumption. 
\begin{hypothesis}[II] \label{h2} The spectral measure $\mu$ satisfies
\[\int_{\R^d}\frac1{1+\Psi(\xi)}\mu(d\xi)<\infty.\]
\end{hypothesis}
 
\begin{remark}
When $\L=-(-\Delta)^{\alpha/2}$ for $\alpha\in (0,2]$ and $\gamma(x)$ is  $\prod_{j=1}^d|x_j|^{-\beta_j}$ or $|x|^{-\beta}$, Hypothesis (II) is equivalent to $\beta<\alpha$. It is also a necessary condition for (\ref{spde'}) to have a unique mild solution (\cite{MR3262944}).
\end{remark}

\begin{definition} An adapted random field $u=\{u(t,x), t\ge0, x\in \R^d\}$ is a mild solution to (\ref{spde'}) with initial condition $u_0\in C_b(\R^d)$, if for all $t\ge0$ and $x\in \R^d$, $\E[u^2(t,x)]<\infty$, and the following integral equation holds
\begin{equation}\label{mildsk}
u(t,x)=Q_tu_0(x)+\int_0^t\int_{\R^d}q_{t-s}(x-y)u(s,y) W^\diamond(ds,dy),
\end{equation}
\end{definition}
\noindent where the stochastic integral is in the Skorohod sense.

Suppose that $u=\{u(t,x), t\ge0, x\in \R^d\}$ is a solution to (\ref{mildsk}), then for fixed $(t,x)$, the square integrable random variable $u(t,x)$ can be expressed uniquely as the Wiener chaos expansion,
\begin{equation}\label{chaos}
 u(t,x)=\sum_{n=0}^\infty I_n(f_n(\cdot, t,x)),
\end{equation} 
where $f_n(\cdot, t,x)$ is symmetric in $\mathcal H^{\otimes n}.$
On the other hand, if we apply (\ref{mildsk}) repeatedly, as in \cite{MR2449130, MR2473265}, we can find explicit representations for $f_n$ with $n\ge 1$
\[f_n(s_1, x_1,\dots, s_n, x_n, t,x)=\frac{1}{n!} q_{t-s_{\sigma(n)}}(x-x_{\sigma(n)})\cdots q_{s_{\sigma(2)}-s_{\sigma(1)}}(x_{\sigma(2)}-x_{\sigma(1)}) Q_{s_{\sigma(1)}}u_0(x_{\sigma(1)}).\]
Here $\sigma$ denotes the permutation of $\{1,2,\dots, n\}$ such that $0<s_{\sigma(1)}<\dots<s_{\sigma(n)}<t$. Note that $f_0(t,x)=Q_tu_0(x)$.

Therefore, to obtain the existence and uniqueness of the solution to (\ref{mildsk}), it suffices to prove 
\begin{equation}\label{sumchaos}
 \sum_{n=0}^\infty n! \|f_n(\cdot, t,x)\|^2_{\H^{\otimes n}}<\infty, ~~ \forall\, (t,x)\in[0,T]\times \R^d.
\end{equation}

\begin{theorem}\label{thmfkskr}
Let  the measure $\mu$ satisfy  Hypothesis (II).
Then (\ref{sumchaos}) holds, and consequently, $u(t,x)$ given by (\ref{chaos}) is the   unique mild solution to (\ref{spde'}).
\end{theorem}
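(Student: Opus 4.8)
The plan is to reduce the entire statement to the single estimate \eqref{sumchaos}. By the isometry \eqref{e2.10}, the convergence of the chaos series \eqref{chaos} in $L^2(\Omega)$ is \emph{equivalent} to \eqref{sumchaos}, and once \eqref{sumchaos} holds the kernels $f_n$ forced by iterating \eqref{mildsk} are uniquely determined, so \eqref{sumchaos} delivers both existence and uniqueness at once. I would start from the explicit formula for $f_n$ and write it as the symmetrization $f_n=\widetilde{h_n}$ of the simplex-supported kernel $h_n(s_1,x_1,\dots,s_n,x_n)=q_{t-s_n}(x-x_n)\cdots q_{s_2-s_1}(x_2-x_1)\,Q_{s_1}u_0(x_1)$ on $\{0<s_1<\cdots<s_n<t\}$. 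Since the covariance kernel in \eqref{inner'} is symmetric, the standard symmetrization identity puts $n!\,\|f_n\|^2$ in the convenient form
\[n!\,\|f_n\|^2_{\H^{\otimes n}}=\sum_{\sigma}\langle h_n,\,h_n\circ\sigma\rangle_{\H^{\otimes n}},\]
the sum being over all permutations $\sigma$ of $\{1,\dots,n\}$ and every summand being nonnegative (after conjugation the spatial integrand is a positive combination of exponentials).

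Next I would move to the spectral side. Tensorizing \eqref{inner'} and using $\widehat q_\tau(\xi)=e^{-\tau\Psi(\xi)}$, the chain of transition densities in $h_n$ telescopes under the spatial Fourier transform into $\exp\big(-\sum_j(\text{gap})\Psi(\zeta_j)\big)$ with partial-sum frequencies $\zeta_j=\xi_1+\cdots+\xi_j$, while $|Q_{s_1}u_0|\le\|u_0\|_\infty$ because $u_0\in C_b(\R^d)$. Each overlap $\langle h_n,h_n\circ\sigma\rangle$ thereby becomes a double integral over two time simplices, coupled only through the singular factors $|s_j-s'_{\sigma(j)}|^{-\beta_0}$, integrated against the product spectral measure. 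The partial-sum arguments $\zeta_j$ are then peeled off one level at a time by the maximum principle of Lemma \ref{lemma4.6} (a consequence of Lemma \ref{lemma}), which replaces $\Psi(\zeta_j)$ by $\Psi(\xi_j)$ after suitable shifts and so decouples the frequencies into a genuine product.

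The resulting time--frequency integrals are exactly of the type controlled in Section \ref{sectionei}: the singular temporal weights are handled by Lemma \ref{lemma0}, the spectral integrals by Lemma \ref{lemma1}, and the simplex volume factors by Lemma \ref{multii}, so that Proposition \ref{hhnt} (or a direct repetition of its proof) supplies the quantitative bound. The essential difference from the Stratonovich analysis of Section \ref{sectionstr} is that here the singular kernel couples two \emph{independent} time chains rather than acting within a single self-interaction; after the temporal integration each frequency is ultimately weighted by a single exponential time-integral $\int_0^t e^{-\tau\Psi(\xi)}\,d\tau\le C(1+\Psi(\xi))^{-1}$, with the \emph{full power one}. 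This is precisely why the spectral integrals converge under Hypothesis (II), i.e. Dalang's condition $\int_{\R^d}(1+\Psi(\xi))^{-1}\mu(d\xi)<\infty$, and not merely under the weaker Hypothesis (I) (where Lemma \ref{lemma0} returns only the exponent $1-\beta_0$ because the singular kernel sits on the same exponent).

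Carrying these estimates through, I expect a final bound of the form
\[n!\,\|f_n\|^2_{\H^{\otimes n}}\le \|u_0\|_\infty^2\,\frac{(C_t)^n}{\Gamma\big(n(1-\beta_0)+1\big)},\]
in which the gamma function comes from integrating the products of singular weights over the simplices via Lemma \ref{multii} (with all exponents $-\beta_0$, giving $\Gamma(1-\beta_0)^n t^{n(1-\beta_0)}/\Gamma(n(1-\beta_0)+1)$) and $C_t$ collects the $t$-powers together with the finite spectral integral from Hypothesis (II). Since $\Gamma(n(1-\beta_0)+1)$ grows like $(n!)^{1-\beta_0}$ up to exponential factors, it dominates $(C_t)^n$ and \eqref{sumchaos} converges for every $t>0$, which finishes the proof. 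I expect the main obstacle to be exactly the colored-in-time structure: unlike the white-in-time case, the kernel $|s_j-s'_j|^{-\beta_0}$ renders the distinct time orderings \emph{non-orthogonal}, so one cannot collapse $n!\,\|f_n\|^2$ to the single ordered integral $\|h_n\|^2$; the real work is to control the whole permutation sum $\sum_\sigma\langle h_n,h_n\circ\sigma\rangle$ uniformly and to carry the maximum-principle shifts through all $n$ levels without forfeiting the gamma-function gain that must beat the combinatorial factor.
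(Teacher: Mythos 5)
Your reduction of the theorem to \eqref{sumchaos}, the spectral rewriting of $n!\,\|f_n\|^2$ via telescoping Fourier transforms, and the appeal to the maximum principle of Lemma \ref{lemma4.6} all agree with the paper's strategy, but the quantitative core of your argument has a genuine gap. You claim that the factor $\Gamma(n(1-\beta_0)+1)^{-1}$ comes from ``integrating the products of singular weights over the simplices via Lemma \ref{multii}''. Lemma \ref{multii} applies only to products of powers of \emph{consecutive gaps} $(r_i-r_{i-1})^{\alpha_i}$ along a single ordered chain; the temporal weights here are the cross-chain couplings $\prod_j|s_j-r_j|^{-\beta_0}$, to which that lemma says nothing, and no lemma in the paper supplies the substitute you would need, such as $\sup_r\int_{\{0<s_1<\cdots<s_n<t\}}\prod_j|s_j-r_j|^{-\beta_0}\,ds\le (Ct^{1-\beta_0})^n/\Gamma(n(1-\beta_0)+1)$. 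There is also an internal conflict in your frequency accounting: if the singular weights are ``handled by Lemma \ref{lemma0}'' jointly with the exponentials, each frequency is weighted by $(1+\Psi(\xi))^{-(1-\beta_0)}$, which is exactly the Stratonovich situation requiring Hypothesis (I) (which is the \emph{stronger} hypothesis, not the weaker one as you state); the same time integration cannot simultaneously produce the full weight $(1+\Psi(\xi))^{-1}$ \emph{and} the gamma factor. Moreover, your bookkeeping of the permutation sum does not close: there are $n!$ nonnegative overlaps $\langle h_n,h_n\circ\sigma\rangle$, each bounded by essentially the same quantity, while $\Gamma(n(1-\beta_0)+1)\sim (n!)^{1-\beta_0}$ offsets only part of $n!$; the leftover $(n!)^{\beta_0}$ defeats any geometric factor $(C_t)^n$. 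Indeed no bound of the form $(C_t)^n/\Gamma(n(1-\beta_0)+1)$ can hold in general, because the high-frequency contribution to $n!\,\|f_n\|^2$ decays only geometrically in $n$ --- this is visible in the paper's own final estimate, whose high-frequency term is $(D_tA_0\varepsilon_N)^{n-k}$. Finally, the maximum-principle ``peeling'' does not go through on a genuine cross term: the two chains carry two different systems of partial sums of the same $\xi_j$'s, and Lemma \ref{lemma4.6} can absorb only one shift at a time.

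The step you are missing is the paper's pivotal first move: Cauchy--Schwarz \emph{in the frequency variable}, $\int\F h_n(s,\cdot)(\xi)\overline{\F h_n(r,\cdot)(\xi)}\,\mu(d\xi)\le A_{t,x}(s)A_{t,x}(r)$ with $A_{t,x}$ as in \eqref{atx}, followed by $2ab\le a^2+b^2$ and the symmetry of the integral. This collapses all cross terms to the diagonal quantity $A^2_{t,x}(s)$, so the conflicting partial sums never arise and the maximum principle applies cleanly; and it decouples the temporal singular weights from the exponentials, so they are disposed of crudely via $\int_0^t\int_0^t f(s)|s-r|^{-\beta_0}\,ds\,dr\le D_t\int_0^t f(s)\,ds$ at the cost of $D_t^n$, with no gamma gain attempted from them at all. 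Convergence of $\sum_n n!\,\|f_n\|^2$ then comes from Proposition \ref{hhnt} applied with $\beta_0=0$: low frequencies contribute $\binom{n}{k}\frac{t^k}{k!}m_N^k$ (genuine factorial decay, from the simplex volume), high frequencies contribute $(A_0\varepsilon_N)^{n-k}$, and one chooses $N$ so large that $2D_tA_0\varepsilon_N<1$. This low/high frequency splitting, with $\varepsilon_N$ made arbitrarily small thanks to Hypothesis (II), is what actually makes the series converge for every $t>0$; gamma-function domination is not a valid substitute for it.
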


\begin{proof}  As $u_0$ is a bounded and continuous function, without loss of generality, we assume that $u_0(x)\equiv1$. Now we have
 \begin{align*}
  &n! \|f_n(\cdot, t,x)\|^2_{\H^{\otimes n}}\\
  = &  n! \int_{[0,t]^{2n}}\int_{\R^{2nd}} h_n(s,y,t,x)h_n(r,z, t,x)\prod_{j=1}^n |s_j-r_j|^{-\beta_0}\prod_{j=1}^n \gamma(y_j-z_j) dydzdsdr,
 \end{align*}
where 
\begin{equation}\label{hn}
h_n(s_1,\dots, s_n, x_1,\dots, x_n, t,x)=\frac{1}{n!} q_{t-s_{\sigma(n)}}(x-x_{\sigma(n)})\cdots q_{s_{\sigma(2)}-s_{\sigma(1)}}(x_{\sigma(2)}-x_{\sigma(1)}).
\end{equation}
Then by (\ref{inner}) and (\ref{exptransform'}), 
\begin{align*}
  &n! \|f_n(\cdot, t,x)\|^2_{\H^{\otimes n}}\\
  \le &  n! \int_{[0,t]^{2n}}\int_{\R^{nd}} \F h_n(s,
  \cdot,t,x)(\xi)\overline{\F h_n(r,\cdot, t,x)(\xi)}  \mu(d\xi) \prod_{j=1}^n|s_j-r_j|^{-\beta_0}dsdr,\\
  \le &  n! \int_{[0,t]^{2n}} A_{t,x}(s) A_{t,x}(r) \prod_{j=1}^n|s_j-r_j|^{-\beta_0}dsdr\\
  \le &  n! \int_{[0,t]^{2n}} A^2_{t,x}(s) \prod_{j=1}^n|s_j-r_j|^{-\beta_0}dsdr, ~\text{(using $2ab\le a^2+b^2$ and the symmetry of the integral)}
 \end{align*}
where 
\begin{equation}\label{atx}
A_{t,x}(s)=\left(\int_{\R^{nd}} |\F h_n(s,
  \cdot,t,x)(\xi)|^2 \mu(d\xi)\right)^{1/2},
  \end{equation}
  with 
  \begin{equation}
   \F h_n(s,  \cdot,t,x)(\xi)=\frac1{n!} e^{-ix\cdot (\xi_1+\dots+\xi_n)}\prod_{j=1}^n \exp\Big[-[s_{\sigma(j+1)}-s_{\sigma(j)}]\Psi(\xi_{\sigma(1)}+\dots+\xi_{\sigma(j)})\Big],
  \end{equation}
where we use the convention $s_{\sigma(n+1)}=t$.

Note that $\int_0^t\int_0^t f(s)|s-r|^{-\beta_0}dsdr\le 2\int_0^t r^{-\beta_0}dr \int_0^t |f(s)|ds$ and let 
$D_t= 2\int_0^t r^{-\beta_0}dr$. Therefore,
\begin{align*}
 &n! \|f_n(\cdot, t,x)\|^2_{\H^{\otimes n}}\le  D_t^n n! \int_{[0,t]^{n}} A^2_{t,x}(s) ds\\
 =&  D_t^nn!\int_{[0,t]^{n}}\int_{\R^{nd}} |\F h_n(s,\cdot, t,x)(\xi)|^2\mu(d\xi) ds\\
 =&  D_t^n\frac1{n!}\int_{[0,t]^{n}}\int_{\R^{nd}} \prod_{j=1}^n \exp\Big[-2[s_{\sigma(j+1)}-s_{\sigma(j)}]\Psi(\xi_{\sigma(1)}+\cdots+ \xi_{\sigma(j)})\Big]\mu(d\xi) ds\\
 \le &  D_t^n \frac{1}{n!}\int_{[0,t]^{n}}\int_{\R^{nd}}\prod_{j=1}^n  \exp\Big[-2[s_{\sigma(j+1)}-s_{\sigma(j)}]\Psi( \xi_{\sigma(j)})\Big]\mu(d\xi) ds ~ \text{ (by Lemma \ref{lemma4.6}) }\\
 = & D_t^n\int_{[0<s_1<\dots<s_n<t]}\int_{\R^{nd}}\prod_{j=1}^n  \exp\Big[-2[s_{j+1}-s_{j}]\Psi( \xi_{j})\Big]\mu(d\xi) ds.
   \end{align*}
Similar as in the proof of Theorem \ref{expint}, we can apply Proposition \ref{hhnt} with $\beta_0=0$ for the last integral and then get the following estimate 
$$n!\|f_n(\cdot, t,x)\|^2_{\H^{\otimes n}}\le D_t^n\sum_{k=0}^n \binom nk \frac{t^k}{k!}m_N^{k}\left[A_0\varepsilon_N\right]^{n-k}, $$
where $\varepsilon_N$ and $m_N$ are given in (\ref{e3.5}) with $\beta_0=0$.  Hence, if we choose $N$ sufficiently large such that $2D_t A_0\varepsilon_N <1$, then we have  
\begin{align*}
 &\sum_{n=0}^\infty n!\|f_n(\cdot, t,x)\|^2_{\H^{\otimes n}}\le  \sum_{n=0}^\infty  D_t^n\sum_{k=0}^n \binom nk \frac{t^k}{k!}m_N^{k}\left[A_0\varepsilon_N\right]^{n-k}\\
 \le & \sum_{k=0}^\infty \frac{t^k}{k!}m_N^{k} \sum_{n=k}^\infty D_t^n 2^n \left[A_0\varepsilon_N\right]^{n-k}=  \frac{1}{1-2D_t A_0\varepsilon_N}\sum_{k=0}^\infty \frac{t^k}{k!}m_N^{k}D_t^k 2^k<\infty.\qedhere
\end{align*}
\end{proof}
\begin{remark}
 Let $\eta(x)$ be a locally integrable function, then as in \cite{HHNT}, the result of the above theorem still holds if the temporal kernel $|r-s|^{-\beta_0}$ is replaced by  $\eta(r-s)$.
\end{remark}

The following theorem provides the Feynman-Kac type of representations for the solution and the moments of the solution when the spectral measure $\mu$ satisfies the stronger condition Hypothesis (I). The proof is similar to the one in \cite{MR2778803} and we omit it here.
\begin{theorem}\label{thmfkmom'}
If we assume that $\mu$ satisfy  Hypothesis (I), then 
 \begin{equation}\label{fkskr}
  u(t,x)=\E^X\left[u_0(X_t^x) \exp\left(\int_0^t \int_{\R^d} \delta_0(X_{t-r}^x-y)W(dr,dy)-\frac12 \int_0^t \int_0^t |r-s|^{-\beta_0}\gamma(X_r-X_s)drds \right)\right]
  \end{equation}
is the unique mild solution to (\ref{spde'}) in the Skorohod sense.
 Consequently, for any positive integer $p$, we have
 \begin{equation}\label{momskr}
  \E[u(t,x)^p]=\E\left[\prod_{j=1}^p u_0(X_t^j+x)\exp\left(\sum_{1\le j<k\le p} \int_0^t\int_0^t |r-s|^{-\beta_0}\gamma(X_r^j-X_s^k)drds\right)\right],
  \end{equation}
where $X_1,\dots, X_p$ are $p$ independent copies of $X$.
\end{theorem}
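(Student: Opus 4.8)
The plan is to combine the uniqueness already established in Theorem \ref{thmfkskr} with an approximation argument that identifies the Feynman--Kac functional (\ref{fkskr}) with the unique mild solution, and then to read off the moment formula (\ref{momskr}) from a Gaussian (Wick) computation. First I note that $1+\Psi(\xi)^{1-\beta_0}\le 2(1+\Psi(\xi))$, so Hypothesis (I) implies Hypothesis (II); hence Theorem \ref{thmfkskr} applies and guarantees that (\ref{spde'}) has a unique mild solution $\sum_{n=0}^\infty I_n(f_n)$ with the kernels $f_n$ given before that theorem. It therefore suffices to show that the functional defined by (\ref{fkskr}) is square integrable and that its Wiener chaos expansion has exactly these kernels $f_n$.

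For the identification I would use the mollified noise and the kernel $\Phi_{t,x}^{\varepsilon,\delta}$ of (\ref{Phi}). Conditionally on $X$, the random variable $V_{t,x}^{\varepsilon,\delta}=W(\Phi_{t,x}^{\varepsilon,\delta})$ is centered Gaussian with variance $\|\Phi_{t,x}^{\varepsilon,\delta}\|_{\H}^2$, so its Wick exponential with respect to $W$ has the chaos expansion
\[
\exp\!\Big(V_{t,x}^{\varepsilon,\delta}-\tfrac12\|\Phi_{t,x}^{\varepsilon,\delta}\|_{\H}^2\Big)=\sum_{n=0}^\infty \frac1{n!} I_n\big((\Phi_{t,x}^{\varepsilon,\delta})^{\otimes n}\big).
\]
Setting $u^{\varepsilon,\delta}(t,x)=\E^X\big[u_0(X_t^x)\exp(V_{t,x}^{\varepsilon,\delta}-\tfrac12\|\Phi_{t,x}^{\varepsilon,\delta}\|_{\H}^2)\big]$ and interchanging $\E^X$ with $I_n$, the chaos kernels of $u^{\varepsilon,\delta}$ are $f_n^{\varepsilon,\delta}=\frac1{n!}\E^X[u_0(X_t^x)(\Phi_{t,x}^{\varepsilon,\delta})^{\otimes n}]$. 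Using the Markov property of $X$, as $(\varepsilon,\delta)\to0$ the factor $\Phi_{t,x}^{\varepsilon,\delta}$ approaches $\delta_0(X_{t-\cdot}^x-\cdot)I_{[0,t]}(\cdot)$ and the expectation of the product of delta functions collapses to the product of transition densities in (\ref{hn}); this shows $f_n^{\varepsilon,\delta}\to f_n$ in $\H^{\otimes n}$. Combined with uniform-in-$(\varepsilon,\delta)$ $L^2$ bounds (obtained exactly as in Step 1 of the proof of Theorem \ref{thmfk}, where Theorem \ref{expint} controls $\E\exp(\tfrac{p^2}2\|\Phi_{t,x}^{\varepsilon,\delta}\|_{\H}^2)$ uniformly), I may pass to the limit term by term and conclude that $u^{\varepsilon,\delta}(t,x)$ converges in $L^2$ to $\E^X[u_0(X_t^x)\exp(V_{t,x}-\tfrac12\int_0^t\int_0^t|r-s|^{-\beta_0}\gamma(X_r-X_s)drds)]$, the correction term being the $L^1$ limit of $\|\Phi_{t,x}^{\varepsilon,\delta}\|_{\H}^2$ identified in (\ref{e.3.4}). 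Since this limit has chaos kernels $f_n$, it coincides with the unique mild solution, which proves (\ref{fkskr}).

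For the moment formula, I introduce $p$ independent copies $X^1,\dots,X^p$ of $X$ and write, at the mollified level,
\[
\big(u^{\varepsilon,\delta}(t,x)\big)^p=\E^{X^1,\dots,X^p}\Big[\prod_{j=1}^p u_0(X_t^{j}+x)\,\exp\!\big(V_{t,x}^{\varepsilon,\delta}(X^j)-\tfrac12\|\Phi_{t,x}^{\varepsilon,\delta}(X^j)\|_{\H}^2\big)\Big].
\]
Conditionally on $(X^1,\dots,X^p)$ the exponents are jointly Gaussian, and the elementary identity $\E[\prod_j \exp(G_j-\tfrac12\mathrm{Var}\,G_j)]=\exp(\sum_{j<k}\mathrm{Cov}(G_j,G_k))$ for a centered Gaussian vector $(G_j)$ reduces $\E^W[(u^{\varepsilon,\delta})^p]$ to the expectation over the $X^j$ of $\prod_j u_0(X_t^{j}+x)$ times $\exp(\sum_{j<k}\langle \Phi_{t,x}^{\varepsilon,\delta}(X^j),\Phi_{t,x}^{\varepsilon,\delta}(X^k)\rangle_\H)$. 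As $(\varepsilon,\delta)\to0$ each cross inner product converges, as in Theorem \ref{appfeyn}, to $\int_0^t\int_0^t|r-s|^{-\beta_0}\gamma(X_r^j-X_s^k)drds$, and Theorem \ref{expint}, applied to the sum over $j<k$, supplies the uniform integrability needed to pass to the limit, yielding (\ref{momskr}). The diagonal terms drop out precisely because the Wick rather than the ordinary exponential removes the $\tfrac12\mathrm{Var}$ contributions, which is what distinguishes (\ref{momskr}) from the Stratonovich formula (\ref{momstr}).

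The main obstacle is the rigorous chaos-level identification in the second paragraph: I must control the convergence $f_n^{\varepsilon,\delta}\to f_n$ in $\H^{\otimes n}$ together with a summable, $(\varepsilon,\delta)$-uniform majorant for $n!\|f_n^{\varepsilon,\delta}\|_{\H^{\otimes n}}^2$, so that the limit can be taken inside the infinite chaos sum. The estimates of Theorem \ref{thmfkskr} via Proposition \ref{hhnt}, with the mollifiers $\widehat p_\varepsilon\le1$ inserted harmlessly, provide such a majorant, but assembling them uniformly in $(\varepsilon,\delta)$ is the delicate point. The interchange of $\E^X$ and $\E^W$ and the passage to the limit in the moment computation are then routine, given the exponential integrability furnished by Theorem \ref{expint}.
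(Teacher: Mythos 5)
Your proposal is correct and follows essentially the route the paper intends: the paper omits this proof, pointing to \cite{MR2778803}, whose argument is precisely your scheme of expanding the conditionally-Gaussian Wick exponential into Wiener chaoses, identifying the limiting kernels with the $f_n$ of Theorem \ref{thmfkskr} (whose hypothesis holds since Hypothesis (I) implies Hypothesis (II)), and invoking uniqueness, with the moment formula obtained by the same mollification-and-limit computation the paper itself carries out in its Hypothesis (II) moment theorem. The only point worth tightening is your appeal to Theorem \ref{expint} for the cross terms: that theorem concerns a single copy of $X$, so the uniform exponential integrability of $\sum_{j<k}\langle \Phi_{t,x}^{\varepsilon,\delta}(X^j),\Phi_{t,x}^{\varepsilon,\delta}(X^k)\rangle_\H$ should be reduced to it via the nonnegativity of these inner products, the bound $\langle \Phi^j,\Phi^k\rangle_\H\le \tfrac12\bigl(\|\Phi^j\|_\H^2+\|\Phi^k\|_\H^2\bigr)$, and H\"older's inequality.
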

\subsection{Feynman-Kac formula for the moments of the solution}
When the measure $\mu$ satisfies Hypothesis (II)  but not  Hypothesis (I), the representation (\ref{fkskr}) may be invalid since $\int_0^t \int_0^t |r-s|^{-\beta_0}\gamma(X_r-X_s)drds$ might be infinite a.s. (see \cite{MR2778803} for the case that $X$ is a $d$-dimensional Brownian motion and $\gamma(x)=\prod_{j=1}^d|x_j|^{-\beta_j}, \beta_j\in (0,1), j=1, \dots, d$). However, the Feynman-Kac formula (\ref{momskr}) for the moments still holds as stated in the following theorem.
\begin{theorem}
 Let the measure $\mu$ satisfy  Hypothesis (II), then the Feynman-Kac formula (\ref{momskr}) for the moments of the mild solution  to (\ref{spde'}) holds.
\end{theorem}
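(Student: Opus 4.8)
The plan is to obtain (\ref{momskr}) by a spatial regularization that reduces the weaker Hypothesis (II) to the setting of Theorem \ref{thmfkmom'}, which needs Hypothesis (I). I would first treat $u_0\equiv1$ and recover the general bounded case at the end. With $p_\varepsilon$ the mollifier of Section \ref{sectionstr}, put $\mu_\varepsilon(d\xi)=(\widehat p_\varepsilon(\xi))^2\mu(d\xi)$; since $0\le\widehat p_\varepsilon\le1$ and $\widehat p_\varepsilon$ is rapidly decreasing, $\mu_\varepsilon$ is a finite measure with $\mu_\varepsilon\le\mu$ and $\mu_\varepsilon\uparrow\mu$ as $\varepsilon\downarrow0$, so Hypothesis (I) holds for $\mu_\varepsilon$ trivially. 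Let $\gamma_\varepsilon$ be the associated covariance and $u_\varepsilon$ the mild solution of (\ref{spde'}) driven by the noise with spectral measure $\mu_\varepsilon$. Theorem \ref{thmfkmom'} then gives
\begin{equation}\label{momeps}
\E[u_\varepsilon(t,x)^p]=\E\left[\exp\left(\sum_{1\le j<k\le p}\int_0^t\int_0^t |r-s|^{-\beta_0}\gamma_\varepsilon(X_r^j-X_s^k)\,drds\right)\right],
\end{equation}
and the whole matter is to pass to the limit $\varepsilon\downarrow0$ in (\ref{momeps}).

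For the left-hand side I would prove $\E[u_\varepsilon(t,x)^p]\to\E[u(t,x)^p]$. Realizing $W_\varepsilon$ on the same space as $W$ (it is the isonormal process over $\H$ equipped with $\mu_\varepsilon$), the fields $u_\varepsilon$ and $u$ share the chaos kernels $f_n$ of (\ref{hn}), which depend on $q$ alone, so $u_\varepsilon-u=\sum_n(I_n^\varepsilon(f_n)-I_n(f_n))$ with each summand in the $n$-th chaos of $W$. The estimate behind Theorem \ref{thmfkskr} is in fact geometric: for $N$ large one has $n!\|f_n(\cdot,t,x)\|_{\H^{\otimes n}}^2\le C\rho^n$ with $\rho=\rho(N)$ arbitrarily small, and since $\mu_\varepsilon\le\mu$ the same bound holds uniformly in $\varepsilon$. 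By hypercontractivity (equivalence of $L^q$-norms on a fixed chaos), choosing $N$ large gives $\sup_\varepsilon\E[|u_\varepsilon(t,x)|^{p+1}]<\infty$, while $\|u_\varepsilon-u\|_{L^2}^2=\sum_n\|I_n^\varepsilon(f_n)-I_n(f_n)\|_{L^2}^2\to0$ by dominated convergence in $n$ (each term tends to $0$ as $\mu_\varepsilon\to\mu$ and is dominated by $4\,n!\|f_n\|_{\H^{\otimes n}}^2$). Convergence in $L^2$ together with the uniform $L^{p+1}$ bound yields $\E[u_\varepsilon^p]\to\E[u^p]$; in particular $u(t,x)\in L^p$.

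On the right-hand side, set $c_n^\varepsilon=\E[(\sum_{j<k}\int_0^t\int_0^t|r-s|^{-\beta_0}\gamma_\varepsilon(X_r^j-X_s^k)drds)^n]$ and let $c_n$ be the same with $\gamma$. Expressing each factor $\gamma_\varepsilon(X_r^j-X_s^k)$ through $\frac1{(2\pi)^d}\int e^{i\xi\cdot(X_r^j-X_s^k)}\mu_\varepsilon(d\xi)$ and using the independence of the copies turns $c_n^\varepsilon$ into an integral against $\prod\mu_\varepsilon$ of a nonnegative integrand, a product over the copies of joint characteristic functions, each nonnegative and of the form $\exp(-(\cdots)\Psi(\cdot))$. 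Hence $\mu_\varepsilon\le\mu$ forces $c_n^\varepsilon\le c_n$, and $\mu_\varepsilon\uparrow\mu$ with dominated convergence gives $c_n^\varepsilon\to c_n$ for each $n$. Since $H_{jk}=\int_0^t\int_0^t|r-s|^{-\beta_0}\gamma(X_r^j-X_s^k)\ge0$, the right-hand side of (\ref{momskr}) equals $\sum_n c_n/n!$ by Tonelli. Fatou's lemma for series then gives $\sum_n c_n/n!=\sum_n(\lim_\varepsilon c_n^\varepsilon)/n!\le\liminf_\varepsilon\sum_n c_n^\varepsilon/n!=\liminf_\varepsilon\E[u_\varepsilon^p]=\E[u^p]<\infty$, so the right-hand side is finite; conversely $c_n^\varepsilon\le c_n$ gives $\E[u_\varepsilon^p]=\sum_n c_n^\varepsilon/n!\le\sum_n c_n/n!$, whence $\E[u^p]\le\sum_n c_n/n!$. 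The two inequalities force equality, proving (\ref{momskr}) for $u_0\equiv1$. The finiteness just obtained is exactly the exponential integrability of the off-diagonal Hamiltonian; applying it to $q\gamma$ with $q>1$ (still covered by Hypothesis (II)) furnishes uniform integrability of $\{\exp(\sum_{j<k}\int\int|r-s|^{-\beta_0}\gamma_\varepsilon(X_r^j-X_s^k))\}_\varepsilon$, and combined with $\gamma_\varepsilon\to\gamma$ off the origin (a Scheff\'e-type argument as in the proof of Theorem \ref{appfeyn}, using $X_r^j\neq X_s^k$ a.s. for $j\neq k$) Vitali's theorem upgrades (\ref{momeps}) to (\ref{momskr}) for general bounded $u_0$.

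The conceptual heart, and the step I expect to be most delicate, is the control of the right-hand side. The diagonal Hamiltonian $\int_0^t\int_0^t|r-s|^{-\beta_0}\gamma(X_r-X_s)drds$ need not even be integrable under Hypothesis (II); but for $j\neq k$ the independence of $X^j$ and $X^k$ replaces the single-process factor $e^{-|r-s|\Psi(\xi)}$ by $e^{-(r+s)\Psi(\xi)}$, and integrating the two time variables separately produces the exponent $1$ in place of $1-\beta_0$, which is precisely why Hypothesis (II), rather than the stronger Hypothesis (I), is the correct hypothesis for the moments. The care lies in justifying the monotonicity $c_n^\varepsilon\le c_n$ (nonnegativity of the multi-parameter integrand and the ordering bookkeeping, now carried out across several independent copies) and the uniform $L^{p+1}$ bound on $u_\varepsilon$. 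Alternatively one could bypass the approximation and compute $\E[u(t,x)^p]$ directly from the chaos expansion via the Wick pairing formula, in which self-contractions are forbidden so the diagonal $j=k$ contributions are automatically absent, as in \cite{MR3080991}.
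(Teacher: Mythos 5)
Your proposal is correct in substance, but it takes a genuinely different route from the paper. The paper regularizes the noise in \emph{both} time and space ($\dot W^{\varepsilon,\delta}=W(\phi^{\varepsilon,\delta}_{t,x})$), solves the approximate equation through the classical Feynman--Kac formula plus an $S$-transform argument, proves the uniform exponential integrability (\ref{e5.11}) of the off-diagonal Hamiltonian $\int_0^t\int_0^t|r-s|^{-\beta_0}\gamma(X_r-\widetilde X_s)\,drds$ head-on by moment estimates (which need only Hypothesis (II), precisely because independence of the two copies decouples the time integrals, as you observe), and finally identifies the limit of $u^{\varepsilon,\delta}$ as the mild solution through the duality relation (\ref{e5.12}). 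You instead regularize only in space at the level of the spectral measure, so that $\mu_\varepsilon$ is finite and Hypothesis (I) holds trivially; this lets you invoke Theorem \ref{thmfkmom'} at each level $\varepsilon$ and reduces the theorem to a limiting argument. Your identification of the limit is also different and arguably cleaner: since the chaos kernels $f_n$ of (\ref{hn}) depend only on $q$ and $u_0$, realizing $W_\varepsilon(\varphi)=W(\varphi * p_\varepsilon)$ on the same space makes $u_\varepsilon-u$ a sum of $n$-th chaoses of $W$ whose norms are computed against $\mu_\varepsilon\le\mu$, so uniqueness (Theorem \ref{thmfkskr}) together with hypercontractivity replaces the paper's $S$-transform and duality steps; moreover, the exponential integrability of the limiting Hamiltonian is not proved directly but is extracted a posteriori from your two-sided squeeze. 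Both arguments rest on the same mechanism: after Fourier transforming in space, all integrands become nonnegative products of factors $e^{-(\cdot)\Psi(\cdot)}$, so $(\widehat p_\varepsilon)^2\le1$ yields monotonicity and Hypothesis (II) suffices.

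Three points need care in a write-up. First, $\gamma_\varepsilon=\gamma * p_\varepsilon * p_\varepsilon$ is bounded and continuous but its Fourier transform $\widehat\gamma\,(\widehat p_\varepsilon)^2$ need not lie in $L^\infty(\R^d)$ (e.g.\ for the Riesz kernel), so $\gamma_\varepsilon$ can fall outside the standing assumption (4) of Section 2 as literally stated; you should note that the proofs of Theorems \ref{thmfkskr} and \ref{thmfkmom'} use only nonnegativity, nonnegative definiteness and the spectral representation, hence apply to $\gamma_\varepsilon$. Second, $\mu_\varepsilon\uparrow\mu$ is not actually monotone (you only have $\mu_\varepsilon\le\mu$ with densities $(\widehat p_\varepsilon)^2\to1$ pointwise), but Fatou and dominated convergence are all your argument really uses. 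Third, your inequality $c_n^\varepsilon\le c_n$ silently identifies the expectation-side moment $c_n$ with its Fourier-side expression, a multi-copy Parseval identity; this is the same identification the paper itself makes in the final equality of the displayed chain in its Step 2, so you are at the paper's level of rigor, but it is the one step of your squeeze that is not purely formal and should be flagged.
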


\begin{proof}
We will adopt the approximation method used in \cite[Section 5]{MR2449130} to prove the result. The proof is split into three steps for easier reading.

{\bf Step 1.} Consider the approximation of (\ref{spde'}),
\begin{equation}\label{appspde'}
\begin{cases}
u^{\varepsilon, \delta}(t,x)=\L u^{\varepsilon, \delta}(t,x)+u^{\varepsilon, \delta}(t,x)\diamond\dot W^{\varepsilon, \delta}(t,x),\\
u^{\varepsilon,\delta}(0,x)=u_0(x).
\end{cases}
\end{equation}
Recall that $\dot W^{\varepsilon, \delta}(t,x)$ is defined in (\ref{apprw}). If $u^{\varepsilon, \delta}(t,x)\in\mathbb D^{1,2}$, then by (\ref{wick})
\begin{equation*}
 u^{\varepsilon, \delta}(t,x)\diamond \dot W^{\varepsilon, \delta}(t,x)=\int_0^t\int_{\R^d}g_\delta(t-s)p_\varepsilon(x-y)u^{\varepsilon, \delta}(t,x) W^\diamond(ds,dy).
\end{equation*}
Therefore, the mild solution to (\ref{appspde'}) is, as defined in \cite{MR2449130}, an adapted random field $\{u^{\varepsilon, \delta}(t,x), t\ge0, x\in \R^d\}$ which is square integrable for all fixed $(t,x)$ and satisfies the following integral equation,
\begin{align*}
 &u^{\varepsilon, \delta}(t,x)=Q_tu_0(x)+\int_0^t\int_{\R^d}q_{t-s}(x-y)u^{\varepsilon, \delta}(s,y)\diamond \dot W^{\varepsilon,\delta}(s,y) dsdy\\
 &=Q_tu_0(x)+\int_0^t\int_{\R^d}\left(\int_0^t\int_{\R^d}q_{t-s}(x-y)g_\delta(s-r)p_\varepsilon(y-z)u^{\varepsilon, \delta}(s,y) dsdy\right) W^{\diamond}(dr,dz).
\end{align*}
Denote \[Z_{t,x}^{\varepsilon, \delta}(r,z)=\int_0^t\int_{\R^d}q_{t-s}(x-y)g_\delta(s-r)p_\varepsilon(y-z)u^{\varepsilon, \delta}(s,y) dsdy.\]
Thus to show that an adapted and square integrable process $\{u^{\varepsilon,\delta}(t,x), t\ge0, x\in \R^d\}$ is a mild solution to (\ref{appspde'}), it is equivalent to show $u^{\varepsilon, \delta}(t,x)=Q_tu_0(x)+\delta(Z_{t,x}^{\varepsilon, \delta})$. Therefore by the definition of the divergence operator $\delta$, it is equivalent to show that for any $F\in \mathbb D^{1,2}$ with mean zero, 
\begin{equation}\label{e5.8}
 \E[Fu^{\varepsilon,\delta}(t,x)]=\E[\langle Z_{t,x}^{\varepsilon, \delta}, DF\rangle_\mathcal H].
\end{equation}
Let 
\begin{equation}\label{e5.9}
 u^{\varepsilon,\delta}(t,x)=\E^X\left[u_0(X_t^x)\exp\left(W(\Phi_{t,x}^{\varepsilon, \delta})-\frac12\|\Phi_{t,x}^{\varepsilon, \delta}\|^2_\mathcal H\right)\right],
\end{equation}
where $\Phi_{t,x}^{\varepsilon, \delta}$ is given by (\ref{Phi}). Using a similar argument based on the technique of $S$-transform as in the proof of \cite[Proposition 5.2]{MR2449130}, we can show that $u^{\varepsilon,\delta}(t,x)$ given by (\ref{e5.9}) satisfies (\ref{e5.8}), and hence it is a mild solution to (\ref{appspde'}).

{\bf Step 2.} In this step, we will show that 
\begin{equation}\label{e5.10}
\lim_{\varepsilon,\delta\downarrow 0} \E\left[\left(u^{\varepsilon, \delta}(t,x)\right)^p\right]=\E\left[\prod_{j=1}^p u_0(X_t^j+x)\exp\left(\sum_{1\le j<k\le p} \int_0^t\int_0^t |r-s|^{-\beta_0}\gamma(X_r^j-X_s^k)drds\right)\right].
\end{equation}

Recall that $u_0$ is a bounded and continuous function, and without loss of generality we assume $u_0(x)\equiv 1$ for a simpler exposition. Denote \[\Phi_{t,x}^{\varepsilon, \delta, j}(r,y):=\int_0^t g_{\delta}(t-s-r)p_\varepsilon(X_s^j+x-y)ds\cdot I_{[0,t]}(r), ~ j=1, \dots, p.\] 
The $p$-moment of $u^{\varepsilon, \delta}(t,x)$ is
\begin{align*}
 &\E\left[\left(u^{\varepsilon, \delta}(t,x)\right)^p\right]=\E^W\E^X \prod_{j=1}^p \exp\left(W(\Phi_{t,x}^{\varepsilon, \delta, j})-\frac12\|\Phi_{t,x}^{\varepsilon, \delta, j}\|_{\mathcal H}^2\right)\\
 &=\E^X\exp\left(\frac12\|\sum_{j=1}^p\Phi_{t,x}^{\varepsilon, \delta, j}\|^2_\mathcal H-\frac12\sum_{j=1}^p\|\Phi_{t,x}^{\varepsilon, \delta, j}\|_{\mathcal H}^2\right)=\E^X\exp\left(\sum_{1\le i<j\le p}\langle \Phi_{t,x}^{\varepsilon, \delta, j},\Phi_{t,x}^{\varepsilon, \delta, k}\rangle_{\mathcal H}\right).
\end{align*}
As in the proof of Theorem \ref{appfeyn},  we can show  that \begin{align*}
\langle \Phi_{t,x}^{\varepsilon,\delta,j}, \Phi_{t,x}^{\varepsilon,\delta,k}\rangle_\mathcal H
 =&\frac{1}{(2\pi)^d} \int_{[0,t]^4} \int_{\R^d} (\widehat p_\varepsilon(\xi))^2\exp\big(-i\xi\cdot (X_{s_1}^j-X_{s_2}^k)\big)\\
&\qquad g_{\delta}(t-s_1-r_1)g_{\delta}(t-s_2-r_2)
|r_1-r_2|^{-\beta_0}\mu(d\xi)dr_1dr_2ds_1ds_2,
\end{align*}
and that $\langle \Phi_{t,x}^{\varepsilon,\delta,j}, \Phi_{t,x}^{\varepsilon,\delta,k}\rangle_\mathcal H
$ converges to $\int_{[0,t]^2}|s_1-s_2|^{-\beta_0}\gamma(X_{s_1}^j-X_{s_2}^k)ds_1ds_2$ in $L^1$ as $(\varepsilon, \delta)$ tends to zero. Now to prove the equality (\ref{e5.10}), it suffices to show that for any $\lambda >0$,
\begin{equation}\label{e5.11}
 \sup_{\varepsilon, \delta >0}\E\left[\exp\left(\lambda \langle \Phi_{t,x}^{\varepsilon,\delta,j}, \Phi_{t,x}^{\varepsilon,\delta,k} \rangle_\mathcal H\right)\right]<\infty.
\end{equation}
By (\ref{inner'}) and (\ref{gdelta}), there exists a positive constant $C$ depending on $\beta_0$ only such that
\begin{align*}
 \langle \Phi_{t,x}^{\varepsilon,\delta,j}, \Phi_{t,x}^{\varepsilon,\delta,k}\rangle_\mathcal H\le C\int_{[0,t]^2}\int_{\R^d}(\widehat p_\varepsilon(\xi))^2\exp(-i\xi\cdot(X_r^j-X_s^k))|r-s|^{-\beta_0}\mu(d\xi)drds.
\end{align*}
Hence to obtain (\ref{e5.11}), it is sufficient to prove  that for any $\lambda>0$,
\begin{equation}
\sup_{\varepsilon>0}\E\left[\exp\left(\lambda \int_{[0,t]^2}\int_{\R^d}(\widehat p_\varepsilon(\xi))^2\exp(-i\xi\cdot(X_r-\widetilde X_s))|r-s|^{-\beta_0}\mu(d\xi)drds\right)\right]<\infty,
\end{equation}
where $\widetilde X$ is an independent copy of $X$. For the $n$-th moment of the exponent, similar to the proof of Theorem \ref{thmfk}, we have that for any $\varepsilon>0$,
\begin{align*}
 &\E\left[\left(\int_{[0,t]^2}\int_{\R^d}(\widehat p_\varepsilon(\xi))^2\exp(-i\xi\cdot(X_r-\widetilde X_s))|r-s|^{-\beta_0}\mu(d\xi)drds\right)^n\right]\\
 =& \int_{[0,t]^{2n}}\int_{\R^{nd}}\prod_{j=1}^n (\widehat p_\varepsilon(\xi_j))^2E\exp(-i\sum_{j=1}^n\xi_j\cdot(X_{r_j}-\widetilde X_{s_j}))\prod_{j=1}^n|r_j-s_j|^{-\beta_0}\mu(d\xi)drds\\
 \le& \int_{[0,t]^{2n}}\int_{\R^{nd}}\E\exp(-i\sum_{j=1}^n\xi_j\cdot(X_{r_j}-\widetilde X_{s_j}))\prod_{j=1}^n|r_j-s_j|^{-\beta_0}\mu(d\xi)drds\\
 =&\E\left[\left(\int_0^t \int_0^t |r-s|^{-\beta_0}\gamma(X_r-\widetilde X_s)drds\right)^n\right].
\end{align*}
Now to prove (\ref{e5.11}), it is sufficient to prove that for any $\lambda >0$, 
\begin{equation}\label{e5.14'}
\E\left[\exp\left(\lambda\int_0^t \int_0^t |r-s|^{-\beta_0}\gamma(X_r-\widetilde X_s)drds\right)\right]<\infty.
\end{equation}
Note that 
\begin{align*}
&\E\left[\left(\lambda\int_0^t \int_0^t |r-s|^{-\beta_0}\gamma(X_r-\widetilde X_s)drds\right)^n\right]=\lambda^n\int_{[0,t]^{2n}}\prod_{j=1}^n|r_j-s_j|^{-\beta_0}\E\left[\prod_{j=1}^n\gamma(X_{r_j}-\widetilde X_{s_j})\right]drds\\
&=\lambda^n (n!)^2 \int_{[0,t]^{2n}}\int_{\R^{2nd}} h_n(s,y,t,0)h_n(r,z, t,0)\prod_{j=1}^n |s_j-r_j|^{-\beta_0}\prod_{j=1}^n \gamma(y_j-z_j) dydz drds,
\end{align*}
where $h_n$ is given by (\ref{hn}), and the last equality is obtained by using the independent increment property of $X$. Then  (\ref{e5.14'}) can be obtained as in the proof  of Theorem \ref{thmfkskr}. 

{\bf Step 3.} As in the proof of Theorem \ref{thmfk}, we can  show that $\sup_{\varepsilon, \delta>0}\sup_{t\in[0,T], x\in\R^d}\E[|u^{\varepsilon,\delta}(t,x)|^p]<\infty$, $u^{\varepsilon,\delta}(t,x)$ converges to a limit  denoted by $u(t,x)$ in $L^p$ for any $p>0$ as $(\varepsilon, \delta)$ goes to zero, and moreover, $u(t,x)$  satisfies the formula (\ref{momskr}). Therefore, by the uniqueness of the mild solution to (\ref{spde'}), to conclude the proof, we only need to show that $u(t,x)$ is a mild solution to (\ref{spde'}), i.e.,
\begin{equation}\label{e5.12}
 \E[Fu(t,x)]=\E[\langle Z_{t,x}, DF\rangle_\mathcal H],
\end{equation}
for any $F\in \mathbb D^{1,2}$ with $\E[F]=0$, where $Z_{t,x}(r,z)=q_{t-r}(x-z)u(r,z)$.

In a way similar to the proof of Theorem \ref{appfeyn}, we can prove that $\lim_{\varepsilon,\delta\downarrow 0}\E[\|Z_{t,x}^{\varepsilon,\delta}-Z_{t,x}\|_{\mathcal H}^2]=0.$ Then we can show the equality (\ref{e5.12}) by  letting $(\varepsilon,\delta)$ in  (\ref{e5.8}) go to zero,  noting that $F\in \mathbb D^{1,2}$ and $\lim_{\varepsilon,\delta\downarrow 0}u^{\varepsilon, \delta}(t,x)=u(t,x)$ in $L^2$. \hfill 
\end{proof}
\begin{remark}
In the second step of the proof, actually we proved that under  Hypothesis (II), (\ref{e5.14'}) holds, i.e., for any $\lambda>0$
\begin{equation*}\label{e5.14}
\E\left[\exp\left(\lambda\int_0^t \int_0^t |r-s|^{-\beta_0}\gamma(X_r-\widetilde X_s)drds\right)\right]<\infty.
\end{equation*}
\end{remark}

\subsection{H\"older continuity}

\begin{hypothesis}[S2]  The spectral measure $\mu$ satisfies that for all $a\in \R^d$, there exist $\alpha_1\in(0,1]$ and $C>0$ such that
\[\sup_{z\in\R^d}\int_0^T\int_{\R^d}e^{-s\Psi(\xi+z)}\left(1-e^{-i(\xi+z)\cdot a}\right)\mu(d\xi)ds\le C|a|^{2\alpha_1}.\]
\end{hypothesis}
\begin{hypothesis}[T2]  The spectral measure $\mu$ satisfies, for some $\alpha_2\in (0,1),$ 
\[\int_{\R^d}\frac{(\Psi(\xi))^{\alpha_2}}{1+\Psi(\xi)}\mu(d\xi)<\infty.\]
\end{hypothesis}

\begin{remark}\label{rm5.7} Similar to the Stratonovich case, we have the following sufficient condition for  Hypothesis (S2) to hold:
\begin{equation}\label{s2'}
\sup_{z\in\R^d}\int_{\R^d}\frac{|\xi+z|^{2\alpha_1}}{1+\Psi(\xi+z)}\mu(d\xi)<\infty.
\end{equation}
Furthermore, if $\eta(\xi):=\Psi(\xi)/|\xi|^{2\alpha_1}$ is a L\'evy characteristic exponent (which is equivalent to say that $-\eta(\xi)$ is continuous, conditionally positive definite and $\eta(0)=0$, see, e.g., \cite[Theorem 1.2.17]{MR2512800}; a special case in which $\eta(\xi)$ is the characteristic exponent of a symmetric stable process is that $\Psi(\xi)=|\xi|^\alpha$ with $\alpha>2\alpha_1$), then condition (\ref{s2'}) is equivalent to 
\begin{equation}\label{s2''}
\int_{\R^d}\frac{|\xi|^{2\alpha_1}}{1+\Psi(\xi)}\mu(d\xi)<\infty.
\end{equation}
Clearly (\ref{s2'}) implies (\ref{s2''}). Now we show that the inverse is true.
Let $M$ be a positive number such that $\eta(\xi)\ge 1$ for all $|\xi|\ge M$. Noting that 
$\sup_{z\in\R^d}\mu([|\xi+z|\le M])<\infty$ by Lemma \ref{lem2.3measure}. 
 \begin{align*}
&\int_{\R^d}\frac{|\xi+z|^{2\alpha_1}}{1+\Psi(\xi+z)}\mu(d\xi)=\int_{[|\xi+z|\le M]}\frac{|\xi+z|^{2\alpha_1}}{1+\Psi(\xi+z)}\mu(d\xi)+\int_{[|\xi+z|> M]}\frac{|\xi+z|^{2\alpha_1}}{1+\Psi(\xi+z)}\mu(d\xi)\\
\le & M^{2\alpha_1} \sup_{z\in\R^d}\mu([|\xi+z|\le M])+2\int_{\R^d}\frac1{1+\eta(\xi+z)}\mu(d\xi)\\
=&C+2 \int_{\R^d}\int_0^\infty e^{-t}e^{-t\eta(\xi+z)}dt \mu(d\xi)\le C+2 \int_{\R^d}\int_0^\infty e^{-t}e^{-t\eta(\xi)}dt \mu(d\xi) \text{ (Lemma \ref{lemma})}\\
=& C+2 \int_{\R^d}\frac{1}{1+\eta(\xi)}\mu(d\xi)\le D+2 \int_{\R^d}\frac{|\xi|^{2\alpha_1}}{1+\Psi(\xi)}\mu(d\xi),
\end{align*}
where $D$ is another constant that may be different from $C$.

Similarly, Hypothesis (T2) actually implies and hence is equivalent to the condition
\begin{equation}\label{t2'}
\sup_{z\in \R^d}\int_{\R^d}\frac{(\Psi(\xi+z))^{\alpha_2}}{1+\Psi(\xi+z)}\mu(d\xi)<\infty.
\end{equation}
Indeed, for all $z\in \R^d$,
\begin{align*}
&\int_{\R^d}\frac{(\Psi(\xi+z))^{\alpha_2}}{1+\Psi(\xi+z)}\mu(d\xi)\le  \int_{\R^d}\left(\frac{1}{1+\Psi(\xi+z)}\right)^{1-\alpha_2}\mu(d\xi)\\
&=\int_{\R^d}~\frac1{\Gamma(1-\alpha_2)}\int_0^\infty t^{-\alpha_2} e^{-[1+\Psi(\xi+z)]t}dt ~\mu(d\xi)\\
&\le \int_{\R^d}~\frac1{\Gamma(1-\alpha_2)}\int_0^\infty t^{-\alpha_2} e^{-[1+\Psi(\xi)]t}dt ~\mu(d\xi) \text{ (Lemma \ref{lemma4.6})}\\&= \int_{\R^d}\left(\frac{1}{1+\Psi(\xi)}\right)^{1-\alpha_2}\mu(d\xi),
\end{align*}
where the first equality follows from the formula
$c^{-\alpha}=\frac{1}{\Gamma(\alpha)} \int_0^\infty t^{\alpha-1} e^{-ct}dt$  for $c>0$ and  $\alpha\in(0,1)$.
Finally Hypothesis (T2) implies (\ref{t2'}) because of the following equivalence 
\[\int_{\R^d}\frac{(\Psi(\xi))^{\alpha_2}}{1+\Psi(\xi)}\mu(d\xi)<\infty\Longleftrightarrow \int_{\R^d}\left(\frac{1}{1+\Psi(\xi)}\right)^{1-\alpha_2}\mu(d\xi)<\infty\]
which is due to the facts $\lim_{|\xi|\to\infty}\Psi(\xi)=\infty$ and $\mu(A)<\infty$ for bounded $A\in \mathcal B(\R^d)$,

\end{remark}
\begin{theorem}\label{thmholder'} Let $u_0(x)\equiv1$ and $u(t,x)$ be the unique mild solution to (\ref{spde'}).  If  $\mu$ satisfies  Hypothesis (S2), then  $u(t,x)$ has a version that is $\theta_1$-H\"older continuous in $x$ with $\theta_1<\alpha_1$ on any compact set of $[0,\infty)\times \R^d$;  Similarly, if $\mu$ satisfies Hypothesis (T2), the solution $u(t,x)$ has a version that is $\theta_2$-H\"older continuous in $t$ with $\theta_2<[\alpha_2\wedge(1-\beta_0)]/2$ on any compact set of $[0,\infty)\times \R^d$.
\end{theorem}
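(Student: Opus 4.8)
The plan is to work directly with the Wiener chaos expansion $u(t,x)=\sum_{n\ge0}I_n(f_n(\cdot,t,x))$ established in the proof of Theorem \ref{thmfkskr}, since under Hypothesis (II) alone the Feynman--Kac functional need not be defined and the argument of Theorem \ref{thmholder} is unavailable. The kernels $f_n$ are symmetric, so by the isometry (\ref{e2.10}), the hypercontractive bound $\|I_n(g)\|_{L^{2p}}\le(2p-1)^{n/2}\|I_n(g)\|_{L^2}$ and Minkowski's inequality, I would reduce the spatial H\"older estimate to proving
\[\sum_{n\ge0}(2p-1)^{n}\,n!\,\|f_n(\cdot,t,x)-f_n(\cdot,t,x')\|_{\H^{\otimes n}}^2\le C_p\,|x-x'|^{2\alpha_1},\]
uniformly for $(t,x)$ in a compact set, and the temporal one to the analogue with $|t-s|^{\alpha_2\wedge(1-\beta_0)}$. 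A Cauchy--Schwarz splitting $\sum_n(2p-1)^{n/2}\sqrt{n!\|g_n\|^2}\le(\sum_n r^{-n})^{1/2}(\sum_n r^n(2p-1)^{n}n!\|g_n\|^2)^{1/2}$ with $r>1$ then turns these into $L^{2p}$-increment bounds, and Kolmogorov's criterion followed by $p\to\infty$ yields every exponent $\theta_1<\alpha_1$ (resp. $\theta_2<[\alpha_2\wedge(1-\beta_0)]/2$).

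For the spatial increment I would repeat the Fourier computation of Theorem \ref{thmfkskr} verbatim, noting that the dependence of $f_n$ on the spatial point enters only through the prefactor $e^{-ix\cdot(\xi_1+\cdots+\xi_n)}$; hence the difference produces the extra factor $|e^{-ix\cdot\eta}-e^{-ix'\cdot\eta}|^2=2\bigl(1-\cos((x-x')\cdot\eta)\bigr)$ with $\eta=\xi_1+\cdots+\xi_n$. After the steps $2ab\le a^2+b^2$, symmetrisation over the time ordering and the bound $\int_{[0,t]^{2n}}f(s)\prod_j|s_j-r_j|^{-\beta_0}\,ds\,dr\le D_t^n\int_{[0,t]^n}f(s)\,ds$, this leads to
\[n!\,\|f_n(\cdot,t,x)-f_n(\cdot,t,x')\|^2\le D_t^n\!\int_{\Omega_t^n}\!\int_{\R^{nd}}2\bigl(1-\cos((x-x')\cdot\zeta_n)\bigr)\prod_{j=1}^n e^{-2 v_j\Psi(\zeta_j)}\,\mu(d\xi)\,dv,\]
where $\zeta_j=\xi_1+\cdots+\xi_j$ and the $v_j$ are the successive time gaps. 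Since $\Omega_t^n\subset\Omega_t^{n-1}\times(0,t)$, I would relax the simplex to decouple the gap $v_n$ attached to $\zeta_n$; on the innermost integral, substituting $s=2v_n$ and writing $\zeta_n=\xi_n+\zeta_{n-1}$, Hypothesis (S2) applied with shift $z=\zeta_{n-1}$ bounds it by $C|x-x'|^{2\alpha_1}$ \emph{uniformly} in $\zeta_{n-1}$. The remaining $(n-1)$-fold integral is exactly of the type treated in Theorem \ref{thmfkskr}: the maximum principle (Lemma \ref{lemma4.6}) replaces each $\Psi(\zeta_j)$ by $\Psi(\xi_j)$, and Proposition \ref{hhnt} with $\beta_0=0$ gives the bound $\sum_{k=0}^{n-1}\binom{n-1}{k}\frac{t^k}{k!}m_N^{k}[A_0\varepsilon_N]^{\,n-1-k}$. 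Reorganising the $k$-sum against the weight $(2p-1)^n$ as in Theorem \ref{thmfkskr}, the factor $1/k!$ produces a (finite) exponential in $m_N$, while the geometric factor is summable once $N=N(p)$ is chosen so that $(2p-1)D_tA_0\varepsilon_N<1$, which is possible because $\varepsilon_N\to0$.

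For the temporal increment ($s<t$) I would mimic the $A+B$ decomposition from the proof of Theorem \ref{thmholder}, but at the level of the kernels $f_n$. The first contribution is that in which the top propagation time changes from $t-s_{\sigma(n)}$ to $s-s_{\sigma(n)}$, producing the Fourier factor $|e^{-(t-s_n)\Psi(\zeta_n)}-e^{-(s-s_n)\Psi(\zeta_n)}|$, which I would estimate by $\bigl(e^{-(t-s_n)\Psi(\zeta_n)}+e^{-(s-s_n)\Psi(\zeta_n)}\bigr)\bigl(|t-s|\Psi(\zeta_n)\bigr)^{\alpha_2}$ using $|e^{-a}-e^{-b}|\le(e^{-a}+e^{-b})|a-b|^{\alpha_2}$; after the gap integration this leaves the weight $\Psi(\zeta_n)^{\alpha_2}/(1+\Psi(\zeta_n))$, controlled uniformly in the shift by Hypothesis (T2) (equivalently by (\ref{t2'})). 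The second contribution, from chaos terms having a time variable in $(s,t)$, is handled as the term $B$ in Theorem \ref{thmholder}: the extra interval of length $t-s$ carrying the singular kernel $|\cdot|^{-\beta_0}$ is estimated by Lemma \ref{lemma0}, yielding a factor of order $|t-s|^{1-\beta_0}$. Taking the smaller exponent gives $|t-s|^{\alpha_2\wedge(1-\beta_0)}$, and the residual frequency/time integrals are closed again by Lemma \ref{lemma4.6} and Proposition \ref{hhnt} with $\beta_0=0$, with the same $(2p-1)^n$-summation.

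The main obstacle is the spatial step: the difference factor $1-\cos((x-x')\cdot\zeta_n)$ involves the \emph{total} frequency $\zeta_n=\xi_1+\cdots+\xi_n$, which is coupled to all the $\xi_j$ through the measure $\prod_j\mu(d\xi_j)$, so one must extract the H\"older factor from the last frequency while retaining enough decay in $\zeta_1,\dots,\zeta_{n-1}$ to keep the residual integral finite; this is precisely what forces the shift-uniform formulation $\sup_z$ in Hypothesis (S2) and the decoupling $\Omega_t^n\subset\Omega_t^{n-1}\times(0,t)$. The second delicate point is bookkeeping: confirming that the large quantity $m_N$ does not destroy summability once the factor $(2p-1)^n$ is inserted, which hinges on keeping the $1/k!$ from Lemma \ref{multii} rather than bounding it crudely.
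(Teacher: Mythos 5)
Your proposal follows essentially the same route as the paper's proof: chaos expansion plus equivalence of $L^p$ norms on each chaos, the spatial increment handled by extracting the factor $1-\cos((x-x')\cdot(\xi_1+\cdots+\xi_n))$, decoupling the last time gap via $\Omega_t^n\subset\Omega_t^{n-1}\times(0,t)$, applying the shift-uniform Hypothesis (S2), and closing with Lemma \ref{lemma4.6} and Proposition \ref{hhnt}; and the temporal increment handled by exactly the paper's two-term decomposition (kernel difference controlled by $|e^{-a}-e^{-b}|\le 2|a-b|^{\alpha_2}$ plus Hypothesis (T2)/(\ref{t2'}), domain difference controlled by the singular-kernel estimates), yielding the exponent $[\alpha_2\wedge(1-\beta_0)]/2$. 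The only deviations are cosmetic (hypercontractivity constant $(2p-1)^{n/2}$ vs.\ $(p-1)^{n/2}$, an extra Cauchy--Schwarz reorganization, and invoking Proposition \ref{hhnt} where the paper uses Lemma \ref{lemma1} and Lemma \ref{multii} for the domain-difference term), so the argument is correct and matches the paper.
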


\begin{remark} As in Remark \ref{RemarkHolder}, we apply the above result to the case when $\L=-(-\Delta)^{\alpha/2}$ with  $\alpha\in (0,2]$,  and  $\gamma(x)=|x|^{-\beta}, \beta\in(0,d)$ or $\gamma(x)=\prod_{j=1}^d |x_j|^{-\beta_j}, \beta_j\in(0,1), j=1,\dots, d$.  Since condition \eqref{s2''} is equivalent to $\alpha_1<\frac12(\alpha-\beta)$ and Hyperthesis (T2) is equivalent to $\alpha_2<1-\frac\beta\alpha,$ if we assume condition \eqref{s2''} and Hypothesis (T2), the solution $u(t,x)$ has a version that is $\theta_1$-H\"older continuous in $x$ with $\theta_1\in (0, \frac12(\alpha-\beta))$ and $\theta_2$-H\"older continuous in $t$ with $\theta_2\in(0, \frac12(1-\frac\beta\alpha)\wedge (1-\beta_0))$, on any compact set of  $[0,\infty)\times \R^d$.
\end{remark}

\begin{proof}
Let $u(t,x)=1+\sum_{n=1}^\infty I_n(h_n(\cdot, t,x))$ and $u(s,y)=1+\sum_{n=1}^\infty I_n(h_n(\cdot,s,y))$, where $h_n$ is given by (\ref{hn}). Then for $p>2$,
\begin{align}\label{lp}
&\|u(t,x)-u(s,y)\|_{L^p}\le \sum_{n=1}^\infty \|I_n(h_n(\cdot,t,x))-I_n(h_n(\cdot,s,y))\|_{L^p}\notag\\
&\le \sum_{n=1}^\infty (p-1)^{n/2}\|I_n(h_n(\cdot,t,x))-I_n(h_n(\cdot,s,y))\|_{L^2} \notag\\
&= \sum_{n=1}^\infty (p-1)^{n/2}\sqrt{n!} \|h_n(\cdot,t,x)-h_n(\cdot,s,y)\|_{\mathcal H^{\otimes n}},
\end{align} 
where the last inequality holds due to the equivalence of $L^p$ norms for $p>1$ on any Wiener chaos space $\mathbb H_n$ (\cite[Theorem 1.4.1]{MR2200233}), and the last equality follows from (\ref{e2.10}).

{\bf Step 1.} First, we study the spatial continuity. Suppose that $s=t$, similar as in the proof of Theorem \ref{thmfkskr}, we have 
\begin{align*}
&n!\|h_n(\cdot,t,x)-h_n(\cdot,t,y)\|_{\mathcal H^{\otimes n}}^2\\
=&n!\left(\|h_n(\cdot,t,x)\|^2_{\mathcal H^{\otimes n}}+\|h_n(\cdot,t,y)\|^2_{\mathcal H^{\otimes n}}-2\langle h_n(\cdot,t,x), h_n(\cdot,s,y)\rangle_{\mathcal H^{\otimes n}}\right)\\
= &\frac{2}{n!}\int_{[0,t]^{2n}}\int_{\R^{nd}} \left[1- e^{-i(x-y)\cdot (\xi_1+\dots+\xi_n)}\right]\prod_{j=1}^n \exp\Big[-[r_{\sigma(j+1)}-r_{\sigma(j)}]\Psi(\xi_{\sigma(1)}+\dots+\xi_{\sigma(j)})\Big]\\
& ~~~~~\prod_{j=1}^n \exp\Big[-[s_{\eta(j+1)}-s_{\eta(j)}]\Psi(\xi_{\eta(1)}+\dots+\xi_{\eta(j)})\Big]\mu(d\xi)\prod_{j=1}^n|r_j-s_j|^{-\beta_0} drds 
\end{align*}
where $\sigma$ and $\eta$ are permutations of the set $\{1,2,\dots, n\}$ such that $r_{\sigma(1)}<r_{\sigma(2)}<\dots<r_{\sigma(n)}$ and $s_{\eta(1)}<s_{\eta(2)}<\dots<s_{\eta(n)}$. Denote
\[A^2(r)=\int_{\R^{nd}}\left[1- e^{-i(x-y)\cdot (\xi_1+\dots+\xi_n)}\right]\prod_{j=1}^n \exp\Big[-2[r_{\sigma(j+1)}-r_{\sigma(j)}]\Psi(\xi_{\sigma(1)}+\dots+\xi_{\sigma(j)})\Big]\mu(d\xi).\]
Recall the notations $D_t=2\int_0^t s^{-\beta_0}ds$ and $\Omega_t^n=\left\{(s_1, \dots, s_n)\in [0,\infty)^n: \sum_{j=1}^n s_j\le t\right\}$. We have 
\begin{align*}
&n!\|h_n(\cdot,t,x)-h_n(\cdot,t,y)\|_{\mathcal H^{\otimes n}}^2
\le \frac{2}{n!} \int_{[0,t]^{2n}}   A^2(r) \prod_{j=1}^n|s_j-r_j|^{-\beta_0} dsdr\le \frac{2}{n!}D_t^n \int_{[0,t]^n} A^2(r)dr\\
=& 2D_t^n\int_{[0<r_1<r_2<\dots<r_n<t]} \int_{\R^{nd}}\left[1- e^{-i(x-y)\cdot (\xi_1+\dots+\xi_n)}\right]\prod_{j=1}^n \exp\Big[-2[r_{j+1}-r_{j}]\Psi(\xi_{1}+\dots+\xi_{j})\Big]\mu(d\xi)dr\\
=&2D_t^n\int_{\Omega_t^n}\int_{\R^{nd}} \left[1- e^{-i(x-y)\cdot (\xi_1+\dots+\xi_n)}\right]\prod_{j=1}^n \exp\Big[-2s_j\Psi(\xi_{1}+\dots+\xi_{j})\Big]\mu(d\xi)ds\\
\le& 2D_t^n \sup_{z\in\R^d}\int_0^t \int_{\R^d} \left[1- e^{-i(x-y)\cdot (z+\xi_n)}\right] \exp[-2s_n \Psi(z+\xi_n)]\mu(d\xi_n)ds_n\\
&~~~~~~~~~~~~~~~~\times \int_{\Omega_t^{n-1}}\int_{\R^{(n-1)d}} \prod_{j=1}^{n-1} \exp\left[-2s_j\Psi(\xi_{1}+\dots+\xi_{j})\right]\mu(d\xi_1)\dots\mu(d\xi_{n-1})ds_1\dots ds_{n-1}\\
\le& C D_t^n |x-y|^{2\alpha_1} \int_{\Omega_t^{n-1}}\int_{\R^{(n-1)d}} \prod_{j=1}^{n-1} \exp\left[-2s_j\Psi(\xi_{j})\right]\mu(d\xi)ds.~~~ \text{ (By Hypothesis (S2))}
\end{align*}
 Applying Lemma (\ref{hhnt}), we have 
\[\sqrt{n!}\|h_n(\cdot,t,x)-h_n(\cdot,t,y)\|_{\mathcal H^{\otimes n}}\le |x-y|^{\alpha_1}C D_t^{n/2} \sum_{k=0}^{n-1} \sqrt{\binom {n-1}{k} \frac{t^k}{k!}m_N^{k}\left[A_0\varepsilon_N\right]^{n-1-k}}.\]
As in the proof of Theorem \ref{thmfkskr}, we can choose $N$ large enough, such that $$\sum_{n=1}^\infty D_t^{n/2} \sum_{k=0}^n \sqrt{\binom nk \frac{t^k}{k!}m_N^{k}\left[A_0\varepsilon_N\right]^{n-k}}<\infty,$$
and hence there exists a constant $C$ such that 
\[\|u(t,x)-u(t,y)\|_{L^p}\le C |x-y|^{\alpha_1},\]
which implies the spatial H\"older continuity of $u(t,x)$.

{\bf Step 2.} Now we consider the H\"older continuity in time, assuming that $0\le s<t\le T$ and $x=y$. Then for the estimation on the $n$-th chaos space, we have
\begin{align*}
&n!\|h_n(\cdot,t,x)-h_n(\cdot,s,x)\|_{\mathcal H^{\otimes n}}^2\\
=&n!\left(\|h_n(\cdot,t,x)\|^2_{\mathcal H^{\otimes n}}+\|h_n(\cdot,s,x)\|^2_{\mathcal H^{\otimes n}}-2\langle h_n(\cdot,t,x), h_n(\cdot,s,x)\rangle_{\mathcal H^{\otimes n}}\right)\\
=&n! \Bigg[\int_{[0,t]^{2n}}\int_{\R^{nd}} \mathcal F h_n(u, \cdot, t,x)(\xi)\overline{\mathcal Fh_n(v,\cdot, t,x)(\xi)}\mu(d\xi)\prod_{j=1}^n|u_j-v_j|^{-\beta_0}dvdu\\
&~~~~~~+\int_{[0,s]^{2n}}\int_{\R^{nd}} \mathcal F h_n(u, \cdot, s,x)(\xi)\overline{\mathcal Fh_n(v,\cdot, s,x)(\xi)}\mu(d\xi)\prod_{j=1}^n|u_j-v_j|^{-\beta_0} dvdu\\
&~~~~~~-2\int_{[0,t]^{n}\times[0,s]^n}\int_{\R^{nd}} \mathcal F h_n(u, \cdot, t,x)(\xi)\overline{\mathcal Fh_n(v,\cdot, s,x)(\xi)}\mu(d\xi)\prod_{j=1}^n|u_j-v_j|^{-\beta_0}dvdu\Bigg].
\end{align*}
Therefore
\begin{equation}\label{dn}
n!\|h_n(\cdot,t,x)-h_n(\cdot,s,x)\|_{\mathcal H^{\otimes n}}^2\le n!(D_n+D_n'),
\end{equation}
where 
\begin{align*}
D_n=&\int_{[0,t]^{2n}}\int_{\R^{nd}} \mathcal F h_n(u, \cdot, t,x)(\xi)\overline{\mathcal Fh_n(v,\cdot, t,x)(\xi)}\mu(d\xi)\prod_{j=1}^n|u_j-v_j|^{-\beta_0}dvdu\\
&~~~~~~-\int_{[0,t]^{n}\times[0,s]^n}\int_{\R^{nd}} \mathcal F h_n(u, \cdot, t,x)(\xi)\overline{\mathcal Fh_n(v,\cdot, s,x)(\xi)}\mu(d\xi)\prod_{j=1}^n|u_j-v_j|^{-\beta_0}dvdu,
\end{align*}
and 
\begin{align*}
D_n'=&\int_{[0,t]^{n}\times[0,s]^n}\int_{\R^{nd}} \mathcal F h_n(u, \cdot, t,x)(\xi)\overline{\mathcal Fh_n(v,\cdot, t,x)(\xi)}\mu(d\xi)\prod_{j=1}^n|u_j-v_j|^{-\beta_0}dvdu\\
&~~~~~~-\int_{[0,s]^{2n}}\int_{\R^{nd}} \mathcal F h_n(u, \cdot, t,x)(\xi)\overline{\mathcal Fh_n(v,\cdot, s,x)(\xi)}\mu(d\xi)\prod_{j=1}^n|u_j-v_j|^{-\beta_0}dvdu.
\end{align*}
We will just estimate $D_n$, and $D_n'$ will share the same upper bound of $D_n$.

Clearly, $D_n=A_n+B_n$
where 
\begin{equation}\label{an}
A_n=\int_{[0,t]^{n}\times ([0,t]^n\backslash [0,s]^n)}\int_{\R^{nd}} \mathcal F h_n(u, \cdot, t,x)(\xi)\overline{\mathcal Fh_n(v,\cdot, t,x)(\xi)}\mu(d\xi)\prod_{j=1}^n|u_j-v_j|^{-\beta_0}dvdu
\end{equation}
and 
\begin{align}
B_n=&\int_{[0,t]^{n}\times[0,s]^n}\int_{\R^{nd}} \left(\overline{\mathcal Fh_n(v,\cdot, t,x)(\xi)}-\overline{\mathcal Fh_n(v,\cdot, s,x)(\xi)}\right)
\notag\\
&~~~~~~~~~~~~~~~~~~~~~\mathcal F h_n(u, \cdot, t,x)(\xi)\mu(d\xi)\prod_{j=1}^n|u_j-v_j|^{-\beta_0}dvdu. \label{bn}
\end{align}
To get an estimation for the right-hand side of (\ref{dn}), we will separate the rest of the proof into three parts for easier reading. 

{\bf Step 2(a).} In this part, we will estimate $A_n$ given in (\ref{an}). Note that $[0,t]^n=\cup_{k_j\in\{0,1\}} I_{k_1}\times I_{k_2}\times \dots \times I_{k_n}$ with $I_1=[0,s]$ and $I_2=[s,t]$. Hence $[0,t]^n\backslash [0,s]^n$ is the union of $2^n-1$ disjoint interval products, each of which contains at least one $[s,t]$.   Denote $E_{n,j}$ the product of $n$ intervals, all of which are $[0,t]$  except that the $j$-th interval is $[s,t]$. Therefore, for the term $A_n$, we have
\begin{align}
 A_n\le& 2^n \sup_{j=1,\dots, n}\int_{[0,t]^n\times E_{n,j}}\int_{\R^{nd}} \mathcal F h_n(u, \cdot, t,x)(\xi)\overline{\mathcal Fh_n(v,\cdot, t,x)(\xi)}\mu(d\xi)\prod_{j=1}^n|u_j-v_j|^{-\beta_0}dvdu\notag\\
 \le & 2^n \sup_{j=1,\dots, n}\int_{[0,t]^n\times E_{n,j}}\left (A^2_{t,x}(u) +A^2_{t,x}(v)\right)\prod_{j=1}^n|u_j-v_j|^{-\beta_0}dvdu \label{ean}
 \end{align}
with $A_{t,x}(u)$ given in (\ref{atx}). Denoting $D_t=2\int_0^t |s|^{-\beta_0}ds$, for positive function $f$, we have the following estimates
\[\int_0^t\int_0^t f(u)|u-v|^{-\beta_0}dvdu\le D_t \int_0^t f(u) du,\]
\[\int_0^t\int_s^t f(u)|u-v|^{-\beta_0}dvdu\le \frac{2^{\beta_0}}{1-\beta_0}(t-s)^{1-\beta_0} \int_0^t f(u) du,\]
and
\[\int_0^t\int_s^t f(v)|u-v|^{-\beta_0}dvdu\le D_t \int_s^t f(v) dv.\]
Applying those estimates, we get
\begin{align}
&\int_{[0,t]^n\times E_{n,j}}\left (A^2_{t,x}(u) +A^2_{t,x}(v)\right)\prod_{j=1}^n|u_j-v_j|^{-\beta_0}dvdu\notag\\
\le & \frac{2^{\beta_0}}{1-\beta_0}(t-s)^{1-\beta_0}D_t^{n-1}\int_{[0,t]^n} A^2_{t,x}(u)du+D_t^n \int_{E_{n,j}} A^2_{t,x}(v)dv.\label{e.5.20}
\end{align}
Note that Hypothesis (T2) implies 
\begin{equation}\label{e5.24}
\int_{\R^d} \frac{1}{1+(\Psi(\xi))^{1-\alpha_2}}\mu(d\xi)<\infty,
\end{equation}
and hence there exists $C>0$ depending on the measure $\mu$ and $\alpha_2$ such that for all $x>0$
\[\int_{\R^d} e^{-x\Psi(\xi)}\mu(d\xi)\le C(1+x^{\alpha_2-1}) \]
by Lemma \ref{lemma1}. On the other hand, by Lemma \ref{multii}, we have 
\begin{align}
&\int_{[0<v_1<v_2<\dots<v_n<t]} \int_{\R^{nd}}\prod_{j=1}^n (1+ (v_{j+1}-v_j)^{\alpha_2-1})\mu(d\xi)dv\notag\\
&\le C^n\sum_{\tau\in\{0,1\}^n}\frac{\prod_{j=1}^n \Gamma(\tau_j(\alpha_2-1)+1)}{\Gamma(\sum_{j=1}^n \tau_j(\alpha_2-1)+n+1)}t^{\sum_{j=1}^n \tau_j(\alpha_2-1)+n}.\label{e5.25}
\end{align}
Combining (\ref{e5.24}) and (\ref{e5.25}) and using the approach in Remark \ref{remark3.8}, we have for $t\in[0,T]$ with $T\ge1$,
\begin{align}\label{e5.26'}
\int_{[0,t]^n} A^2_{t,x}(u)du\le \frac{C^n}{n!}\sum_{m=0}^n \binom{n}{m} \frac{t^{m(\alpha_2-1)+n}}{\Gamma(m(\alpha_2-1)+n+1)}\le \frac{(2C)^n}{n!} \frac{T^n}{\Gamma(n\alpha_2+1)}.
\end{align}
Similarly, for all $j\in\{1,2,\dots, n\}$ and $0\le s<t\le T$ with $T\ge1$, we have
\begin{align}
 &\int_{E_{n,j}} A^2_{t,x}(v)dv\le \frac1{(n!)^2}\int_{[0,t]^{n-1}\times [s,t]}\int_{\R^{nd}} \prod_{j=1}^n \exp\left(-2(v_{\sigma(j+1)}-v_{\sigma(j)})\psi(\xi_{\sigma(j)})\right) \mu(d\xi)dv \notag\\
 &\le  \frac{ C^n}{(n!)^2}\int_{[0,t]^{n-1}\times [s,t]}\prod_{j=1}^n \left(1+(v_{\sigma(j+1)}-v_{\sigma(j)})^{\alpha_2-1}\right)dv \notag\\
  &\le \frac{ C^n}{(n!)^2}\left(\int_{[0,t]^{n}}-\int_{[0,s]^{n}}\right)\prod_{j=1}^n \left(1+(v_{\sigma(j+1)}-v_{\sigma(j)})^{\alpha_2-1}\right)dv \notag\\
 &=\frac{ C^n}{n!} \left(\int_{[0<v_1<\dots<v_n<t]}-\int_{[0<v_1<\dots<v_n<s]^{n}}\right)\prod_{j=1}^n \left(1+(v_{j+1}-v_{j})^{\alpha_2-1}\right)dv\notag\\
 &=\frac{ C^n}{n!}\sum_{\tau\in\{0,1\}^n}\frac{\prod_{j=1}^n \Gamma(\tau_j(\alpha_2-1)+1)}{\Gamma(\sum_{j=1}^n \tau_j(\alpha_2-1)+n+1)}(t^{\sum_{j=1}^n \tau_j(\alpha_2-1)+n}-s^{\sum_{j=1}^n \tau_j(\alpha_2-1)+n})\notag\\
&\le \frac1{n!} \frac{C^n}{\Gamma(n\alpha_2+1)} nT^n(t-s)^{\alpha_2}. \label{e5.27}
 \end{align}
 The last inequality holds because $t^{\sum_{j=1}^n \tau_j(\alpha_2-1)+n}-s^{\sum_{j=1}^n \tau_j(\alpha_2-1)+n}\le nT^n(t-s)^{\alpha_2}$ for all $n$ and $\tau.$
Combining the above (\ref{e5.26'}) and (\ref{e5.27}) with (\ref{ean}) and (\ref{e.5.20}), we have 
\begin{align}\label{estan}
 A_n &\le \frac1{n!} \frac{C^n}{\Gamma(n\alpha_2+1)} \left((t-s)^{1-\beta_0}+(t-s)^{\alpha_2}\right),
 \end{align}
where $C$ depends on the measure $\mu$, $T, \beta_0$ and $\alpha_2.$
 
 {\bf Step 2(b).}  The term  $B_n$ given in (\ref{bn}) will be estimated in this part. 
 \begin{align*}
  B_n\le & \frac1{(n!)^2} \int_{[0,t]^n\times[0,s]^n}\int_{\R^{nd}} \left|e^{-(t-v_{\sigma(n)})\Psi(\xi_1+\dots+\xi_n)}-e^{-(s-v_{\sigma(n)})\Psi(\xi_1+\dots+\xi_n)}\right|\\
  & ~~~\prod_{j=1}^{n-1} e^{-(v_{\sigma(j+1)}-v_{\sigma(j)})\Psi(\xi_{\sigma(1)}+\dots+\xi_{\sigma(j)})} \mathcal F h_n(u,\dots, t,x)(\xi)\mu(d\xi) \prod_{j=1}^n|u_j-v_j|^{-\beta_0}dvdu\\
  \le & 2(t-s)^{\alpha_2} \frac1{(n!)^2} \int_{[0,t]^n\times[0,s]^n}\int_{\R^{nd}} \left(\Psi(\xi_1+\dots+\xi_n)\right)^{\alpha_2} \prod_{j=1}^{n} e^{-(v_{\sigma(j+1)}-v_{\sigma(j)})\Psi(\xi_{\sigma(1)}+\dots+\xi_{\sigma(j)})}\\
  &~~~~~~~~\prod_{j=1}^{n} e^{-(u_{\eta(j+1)}-u_{\eta(j)})\Psi(\xi_{\eta(1)}+\dots+\xi_{\eta(j)})} \mu(d\xi) \prod_{j=1}^n|u_j-v_j|^{-\beta_0}dvdu, 
 \end{align*}
where $v_{n+1}=s, u_{n+1}=t$ and $\sigma$  and $\eta$ are permutations such that $0<v_{\sigma(1)}<\dots<v_{\sigma(n)}<t$ and $0<u_{\eta(1)}<\dots<u_{\eta(n)}<t$, and in the last step we used the inequality $|e^{-x}-e^{-y}|\le |e^{-x}+e^{-y}||x-y|^\alpha\le 2 |x-y|^\alpha$ for $x,y>0$ and $\alpha\in(0,1]$. 

Let
\[A_t^2(u)=\int_{\R^{nd}} \left(\Psi(\xi_1+\dots+\xi_n)\right)^{\alpha_2} \prod_{j=1}^{n} e^{-2(u_{\eta(j+1)}-u_{\eta(j)})\Psi(\xi_{\eta(1)}+\dots+\xi_{\eta(j)})} \mu(d\xi)\]
and 
\[A_s^2(v)=\int_{\R^{nd}} \left(\Psi(\xi_1+\dots+\xi_n)\right)^{\alpha_2} \prod_{j=1}^{n} e^{-2(v_{\sigma(j+1)}-v_{\sigma(j)})\Psi(\xi_{\sigma(1)}+\dots+\xi_{\sigma(j)})} \mu(d\xi).\]
we have
\begin{align*}
 &\int_{[0,t]^n}A_t^2(u)du \\
 =&n!\int_{[0<u_1<\dots<u_n<t]} \int_{\R^{nd}} \left(\Psi(\xi_1+\dots+\xi_n)\right)^{\alpha_2} \prod_{j=1}^{n} e^{-2(u_{j+1}-u_{j})\Psi(\xi_{1}+\dots+\xi_{j})} \mu(d\xi)du\\
 \le& n! \sup_{z\in \R^d}\int_0^t\int_{\R^d} \left(\Psi(\xi_n+z)\right)^{\alpha_2}e^{-2(t-u_{n})\Psi(\xi_n+z)}\mu(d\xi_n)du_n\\
 &~~~~~~~~\times \int_{[0<u_1<\dots<u_{n-1}<t]}\int_{\R^{(n-1)d}} \prod_{j=1}^{n-1} e^{-2(u_{{j+1}}-u_{{j}})\Psi(\xi_{1}+\dots+\xi_{j})}\mu(d\xi)du\\
\le& n! C \int_{[0<u_1<\dots<u_{n-1}<t]}\int_{\R^{(n-1)d}} \prod_{j=1}^{n-1} e^{-2(u_{{j+1}}-u_{{j}})\Psi(\xi_{j})}\mu(d\xi)du\\
 \le& n! \frac{C^{n+1}T^n}{\Gamma(n\alpha_2+1)},
\end{align*}
where the last second step follows from Lemma \ref{lemma0}, Hypothesis (T2), Remark \ref{rm5.7} and Lemma \ref{lemma4.6}, and the last step follows by a similar argument for (\ref{e5.26'}). Now we have the estimation for $B_n$,
\begin{align}
 B_n&\le (t-s)^{\alpha_2} \frac1{(n!)^2} \int_{[0,t]^n\times[0,s]^n} (A_t^2(u) + A_s^2(v)) \prod_{j=1}^n|u_j-v_j|^{-\beta_0}dvdu\notag\\
 &\le 2(t-s)^{\alpha_2} \frac1{(n!)^2} D_t^n \int_{[0,t]^n} A_t^2(u)du\notag\\
 &\le 2(t-s)^{\alpha_2} \frac1{n!} D_t^n \frac{C^{n+1}T^n}{\Gamma(n\alpha_2+1)}.
 \label{estbn}
 \end{align}
 
 {\bf Step 2(c).} Therefore, combining (\ref{estan}) and (\ref{estbn}), we have that there exists a constant $C$ depending on the measure $\mu$, $T, \alpha_2$ and $ \beta_0$ such that \begin{equation}\label{e5.26}
\sum_{n=1}^\infty (p-1)^{n/2}\sqrt{n!}\sqrt{D_n}=\sum_{n=1}^\infty (p-1)^{n/2}\sqrt{n!} \sqrt{A_n+B_n}\le C (t-s)^{[\alpha_2\wedge (1-\beta_0)]/2}.
\end{equation}

Note that we can get  estimation for $D_n'$ analogous to (\ref{e5.26}), by  an argument similar as the above for $D_n$. Finally, by (\ref{lp}), (\ref{dn}) and (\ref{e5.26}), we have
 \begin{align*}
 &\|u(t,x)-u(s,x)\|_{L^p}\le\sum_{n=1}^\infty (p-1)^{n/2}\sqrt{n!} \|h_n(\cdot,t,x)-h_n(\cdot,s,y)\|_{\mathcal H^{\otimes n}}\notag \\
 &\le \sum_{n=1}^\infty (p-1)^{n/2}\sqrt{n!} \sqrt{D_n+D_n'}\le C (t-s)^{[\alpha_2\wedge (1-\beta_0)]/2}.
 \end{align*}
 The H\"older continuity in time now is concluded by the Kolmogorov's criterion.
\end{proof}


\bibliographystyle{abbrv}
\bibliography{SPDE-Levy}
\begin{tabular}{lll}
Jian Song \\
Department of Mathematics and Department of Statistics \& Actuarial Science\\
The University of Hong Kong, Hong Kong\\
{\tt txjsong@hku.hk}
\end{tabular}

\end{document}